\documentclass[12pt]{article}% Math and Physical Sciences Reference Style
\usepackage{color}
\usepackage{amsmath}
\usepackage{amsfonts,amssymb}
\usepackage{extarrows}
\usepackage{geometry}
\usepackage{authblk} % \affil
\usepackage{hyperref} % ref in PDF
\usepackage{mathrsfs}
\usepackage[utf8]{inputenc}
\def\0{\emptyset}

\usepackage{amsthm}
\newtheorem{theorem}{Theorem}

\newtheorem{lemma}[theorem]{Lemma}
\newtheorem{claim}[theorem]{Claim}
\newtheorem{observation}[theorem]{Observation}
\newtheorem{corollary}[theorem]{Corollary}
\newtheorem{proposition}[theorem]{Proposition}
\newtheorem{conjecture}[theorem]{Conjecture}

\newtheorem*{remark*}{Remark}
\usepackage{graphicx}

\usepackage{mathtools} % coloneqq
\usepackage{enumerate}
\usepackage{enumitem}

\usepackage{
	amsmath,			% Math Environments
	amssymb,			% Extended Symbols
	enumerate,		    % Enumerate Environments
	graphicx,			% Include Images
	lastpage,			% Reference Lastpage
	multicol,			% Use Multi-columns
	multirow,			% Use Multi-rows
	pifont,			    % For Checkmarks
}

\usepackage[numbers]{natbib}

\DeclareMathOperator{\ex}{\mathrm{ex}}
\DeclareMathOperator{\ind}{\mathrm{ind}}

\usepackage{algorithm}
\usepackage{algpseudocode}  % 更现代的伪代码包
\floatname{algorithm}{Procedure}

\begin{document}

% --- PAPER INFO ---

\title{The maximum number of odd cycles in planar graphs forbidding shorter odd cycles}

\author[1]{\small\bf Yichen Wang\thanks{E-mail: wangyich22@mails.tsinghua.edu.cn}}
\author[2]{\small\bf Ervin Gy\H{o}ri\thanks{E-mail: gyori.ervin@renyi.hu}}
\author[3]{\small\bf Zhen He\thanks{Corresponding author. E-mail: zhenhe@bjtu.edu.cn}}
% \author[3]{\small\bf Xin Cheng\thanks{E-mail: xincheng@mail.nwpu.edu.cn}}
% \author[2]{\small\bf Kitti Varga\thanks{E-mail: vkitti@cs.bme.hu}}
% \author[4,5]{\small\bf Yuanpei Wang\thanks{E-mail: boyuan@shu.edu.cn}}
% \author[1]{\small\bf Xiamiao Zhao\thanks{E-mail: zxm23@mails.tsinghua.edu.cn}}
% \author[4,5]{\small\bf Junpeng Zhou\thanks{E-mail: junpengzhou@shu.edu.cn}}

\affil[1]{\small Department of Mathematical Sciences, Tsinghua University, Beijing, P.R. China.}
\affil[2]{\small HUN-REN Alfréd Rényi Institute of Mathematics, 1053 Budapest, Hungary.}
\affil[3]{\small School of Mathematics and Statistics, Beijing Jiaotong University, Beijing, China.}
% \affil[3]{\small School of Mathematics and Statistics, Northwestern Polytechnical University and Xi'an-Budapest Joint Research Center for Combinatorics, Xi'an 710129, Shaanxi, P.R. China.}

% \affil[4]{\small Department of Mathematics, Shanghai University, Shanghai 200444, P.R. China.}
% \affil[5]{\small Newtouch Center for Mathematics of Shanghai University, Shanghai 200444, P.R. China.}

%\author{Yichen Wang}
\date{}

\maketitle\baselineskip 16.3pt

\begin{abstract}
    Given a graph $H$ and a family of graphs $\mathcal{F}$, the generalized planar Tur\'an number $\ex_\mathcal{P}(n, H, \mathcal{F})$ is the maximum number of copies of $H$ in an $n$-vertex planar graph that contains no graph $F \in \mathcal{F}$ as a subgraph.
    When only induced copies of $H$ are counted, we denote the corresponding generalized planar Tur\'an number by $\ex_\mathcal{P}(n, H^{\ind}, \mathcal{F})$.
    Gy\H{o}ri and Karim~\cite{gyHori2024generalized} determined $\ex_\mathcal{P}(n, C_{5}, \{C_3\})$.
    In this paper, we determine the exact value of $\ex_\mathcal{P}(n, C_{2k+1}, \{C_3,C_5,\ldots,C_{2k-1}\})$ for every $k \ge 3$.

    Since all shorter odd cycles are forbidden, every $C_{2k+1}$ is induced.
    This problem is closely related to the inducibility of odd cycles in planar graphs.
    Ghosh, Gy\H{o}ri, Janzer, Paulos, Salia and Zamora~\cite{ghosh2022maximum}~(and independently Savery~\cite{savery2024planar}) determined the exact value of $\ex_\mathcal{P}(n, C_5^{\ind}, \emptyset)$.
    Moreover, they established a conjecture for all odd cycles $C_{2k+1}$ with $k \ge 3$.
    Our result confirms their conjecture under the additional assumption that all shorter odd cycles are forbidden.
\end{abstract}

% %\textbf{AMS classification: }\textit{05C75, 05C65, 05C05}\vskip 0.3cm

{\bf Keywords:} generalized Tur\'an number, planar Tur\'an problem, odd cycles, inducibility.
% \vskip.3cm

% -------------------------------------------------------------
% --------- Here begins INTRODUCTION SECTION ------------------
\section{Introduction}
% \begin{figure}
%     \centering
%     \includegraphics[width=\textwidth]{src/PlanarCycle-01.png}
%     \caption{The logic of propositions and lemmas.}\label{fig: logic}
% \end{figure}
For an integer $k\geq 3$, let $C_k$ denote the cycle with $k$ vertices, and $P_k$ denote the path with $k$ vertices.
Let $\mathcal{C}^o_{<2k+1} = \{C_3, C_5, \ldots, C_{2k-1}\}$ be the set of all odd cycles of length less than $2k+1$.
Let $H$ be a graph and $\mathcal{F}$ a family of graphs.
The generalized Tur\'an number $\ex(n, H, \mathcal{F})$ is the maximum number of copies of $H$ in an $n$-vertex graph that contains no graph $F \in \mathcal{F}$ as a subgraph.
When $H$ is an edge, $\ex(n,H,G)$ is the classical Tur\'an number $\ex(n,G)$.
Restricting the host graph to be planar gives the planar Tur\'an problem.
Let $\ex_\mathcal{P}(n, H,  \mathcal{F})$ denote the maximum number of copies of $H$ in an $n$-vertex planar graph that contains no graph $F \in \mathcal{F}$ as a subgraph.
Let $f(n,H)$ denote the maximum number of copies of $H$ in an $n$-vertex planar graph, i.e. $f(n,H) = \ex_\mathcal{P}(n,H,\emptyset)$.

This problem was initiated by Hakimi and Schmeichel~\cite{hakimi1979number}, who determined $f(n,C_3)$ and $f(n,C_4)$ exactly.
$f(n, C_4)$ was also studied by Alameddine~\cite{alameddine1980number} independently.
In the general case, the order of magnitude of $f(n,H)$ was studied by many researchers, for example, when $H$ is a tree~\cite{gyHori2020generalized} and when $H$ is an arbitrary graph~\cite{huynh2022subgraph}.
There are also many results about specific graphs $H$.
Alon and Caro~\cite{alon1984number} studied $f(n,K_{2,t})$ for $t\geq 2$.
Wood~\cite{wood2007maximum} determined the exact value of $f(n,K_{4})$.
Grzesik, Gy\H{o}ri, Janzer, Paulos, Salia and Zamora gave the exact value of $f(n,P_4)$~\cite{grzesik2022maximum}.
For longer paths, Antonir and Shapira obtained good estimates for $f(n,P_{2k+1})$~\cite{antonir2024bounding}.

There is also a substantial body of work on the case in which $H$ is a cycle.
Hakimi and Schmeichel~\cite{hakimi1979number} gave an upper bound for $f(n,C_5)$ and conjectured the exact value of $f(n,C_5)$ in the same paper.
Later, Gy\H{o}ri, Paulos, Salia, Tompkins and Zamora~\cite{gyHori2019maximum} confirmed their conjecture.
Recently, Cox and Martin~\cite{cox2022counting, cox2023maximum} determined the asymptotic value of $f(n,C_{2k})$ for $k \in \{3,4,5,6\}$.
Lv, Gy\H{o}ri, He, Salia, Tompkins and Zhu~\cite{lv2024maximum} determined the asymptotic value of $f(n,C_{2k})$ for all $k$.
Heath, Martin and Wells~\cite{heath2025maximum} obtained a bound on $f(n,C_{2k+1})$, which is asymptotically tight up to a factor of $3/2$, and for $k\in \{3,4\}$, they gave an asymptotically tight result.

% Alon and Shikhelman~\cite{alon2016many}  introduced a generalized extremal function $ex(n,H,\mathcal{F})$, defined to be the maximum number of copies of a graph $H$ in an $n$-vertex graph with no $F\in \mathcal{F}$ as a subgraph, where $\mathcal{F}$ is a family of graphs.
% More recently,
Gy\H{o}ri, Paulos, Salia, Tompkins and Zamora~\cite{gyHori2020generalized} initiated the study of the general Tur\'an problem in planar graphs.
% Similarly, let $ex_\mathcal{P}(n,H,\mathcal{F})$ denote the maximum number of copies of a graph $H$ in an $n$-vertex, $\mathcal{F}$-free planar graph.
They determined the order of $\ex_\mathcal{P}(n,C_k,C_4)$ for all $k\geq 5$, and gave the exact value of $\ex_\mathcal{P}(n,C_5,C_4)$.
Hakimi and Schmeichel~\cite{hakimi1979number} determined the exact value of $\ex_\mathcal{P}(n,C_3,C_4)$.
Gy\H{o}ri and Karim~\cite{gyHori2024generalized} determined the exact value of $\ex_\mathcal{P}(n,C_\ell,C_3)$ for $\ell\in\{4,5,6\}$, and also the sharp upper bound of $\ex_\mathcal{P}(n,C_3,C_\ell)$ for $\ell\in \{4,5,6\}$.

In this paper, we determine the generalized planar Tur\'an number $\ex_\mathcal{P}(n,C_{2k+1},\mathcal{C}^o_{<2k+1})$ as follows.
\begin{theorem}\label{thm: main}
    For every $k \ge 3$, there exists a positive integer $N = N(k)$ such that, for all $n \ge N$,
    \[
        \ex_\mathcal{P}(n,C_{2k+1},\{C_3,C_5,\ldots,C_{2k-1}\}) = h_k(n),
    \]
    where
    \[
        h_k(n) = \max \{x_1x_2\cdots x_k \mid x_1 + x_2 + \cdots + x_k = n-k-1, \text{~each $x_i$ is a positive integer}\}.
    \]
    The bound is attained by the following construction~(see Figure~\ref{fig: extremal construction}):
    \begin{enumerate}
        \item Let $u_1v_1u_2v_2 \ldots u_{k}a_2a_1u_1$ be a $C_{2k+1}$.
        \item Blow up $v_1,v_2,\ldots,v_{k-1}$ to sets of sizes $x_1,x_2,\ldots,x_{k-1}$, respectively.
        \item Replace $a_1a_2$ with any tree with $x_k$ edges. Since any tree is bipartite, connect one part to $u_1$ and the other part to $u_k$.
        \item We have $x_1+x_2+\cdots+x_{k-1}+x_k = n-k-1$. It is straightforward to verify that the number of copies of $C_{2k+1}$ is at most $h_k(n)$, with equality when the $x_i$ differ by at most one.
    \end{enumerate}
    Note that the construction is not unique, since the tree may be chosen in many different ways.
\end{theorem}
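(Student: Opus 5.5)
Lower bound: the construction in Theorem~\ref{thm: main} is planar and contains no odd cycle shorter than $2k+1$. Its $C_{2k+1}$'s are obtained by choosing one vertex from each blown-up class $v_1,\dots,v_{k-1}$ and one tree edge joining the $u_1$-side to the $u_k$-side. This gives $x_1\cdots x_k$ cycles, and optimizing the $x_i$ gives $h_k(n)$. The real work is the matching upper bound, which I would prove by a stability argument followed by an exact cleanup.

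\textbf{Step 1: structure of a single $C_{2k+1}$.} Let $G$ be an extremal planar graph with no $C_3,\dots,C_{2k-1}$, and fix a $(2k+1)$-cycle $C$. Since all shorter odd cycles are forbidden, $C$ is induced. Moreover, any vertex outside $C$ has at most two neighbours on $C$, and if it has two, they are at distance exactly $2$ along $C$. Otherwise a chord-like path would split $C$ into a shorter odd cycle. More generally, any path $P$ between two vertices of $C$, internally disjoint from $C$, has length congruent mod $2$ to, and at least as long as, the shorter arc it spans in a suitable sense. So every $C_{2k+1}$ is "geodesic-like".

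\textbf{Step 2: asymptotic counting.} I would encode each $C_{2k+1}$ by choosing $k$ of its vertices, for instance alternate vertices, that determine it up to $O(1)$ choices. Using planarity, I would bound the number of cycles by a product of "degrees" of pairs of vertices at distance $2$. For each pair $\{u,w\}$ at distance $2$, let $d(u,w)$ be the number of common neighbours. Then $\sum d(u,w)=O(n)$ in planar graphs (the standard $K_{2,t}$ counting, as used by Cox--Martin and Lv et al.\ for $C_{2k}$). A $C_{2k+1}$ is essentially a cyclic sequence of $k$ such "diamonds" with one extra edge, so the count is at most
\[
\sum \prod_{i} d(u_i,u_{i+1}).
\]
By AM--GM this is roughly $(n/k)^k$, with heavy contributions only from configurations where $k$ pairs carry almost all the weight. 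This yields $\ex_\mathcal P \le (1+o(1))(n/k)^k$ together with a stability statement. Namely, an extremal $G$ contains vertices $u_1,\dots,u_k$ such that all but $o(n)$ vertices lie in sets $V_i$ of common neighbours of $u_i,u_{i+1}$ ($i\le k-1$), or in a set $A$ forming paths of length $3$ from $u_k$ to $u_1$. I expect this step to be the main obstacle. The odd length forces one "segment" to be a path of length $3$ instead of $2$, and the weight counting must handle those $P_4$-segments (edges $ab$ with $a\sim u_k$, $b\sim u_1$) in addition to the $K_{2,t}$-type segments. I would first try to charge each such edge to a face of a suitable auxiliary planar multigraph on $\{u_i\}$.

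\textbf{Step 3: exact cleanup.} Given the stability structure, I would argue vertex by vertex. A vertex not in the main structure lies on $O(n^{k-2})$ copies of $C_{2k+1}$, whereas moving it into the smallest $V_i$ adds about $n^{k-1}/k^{k-1}$ copies. So for $n\ge N$ every vertex is in the structure, apart from the $k+1$ vertices $u_1,\dots,u_k$ and the endpoints of the $u_k$--$u_1$ segments.

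It then remains to count. The set $A\cup\{\text{segment vertices}\}$ spans a forest: any cycle there, combined with $u_1,u_k$ and planarity, would create a forbidden odd cycle or violate planarity. Hence the number of length-$3$ connectors is at most $|A|-1$ edges. With $x_k$ defined as the number of these edges, the vertex count is $n=k+1+\sum x_i$. Extra $u_i$-type vertices or chords only reduce the count, giving at most $h_k(n)$.
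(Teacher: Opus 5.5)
Your lower bound and the local facts in Step~1 are fine: a vertex off a $C_{2k+1}$ sees at most two of its vertices, and then at distance two along it. The genuine gap is Step~2, which is in effect the whole theorem. You assert the bound $(1+o(1))(n/k)^k$ together with the stability structure, but the ingredients you name do not give it.

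First, $\sum d(u,w)=O(n)$ is false. Since $G$ is triangle-free, the sum over pairs at distance two equals $\sum_v\binom{d(v)}{2}$, which is $\Theta(n^2)$ as soon as some vertex has linear degree (e.g.\ a star). A linear budget holds only for pairs of large codegree, and cycles through low-codegree pairs then need separate control.

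Second, even with a correct budget, AM--GM bounds a single product $\prod_i d(u_i,u_{i+1})$ under a linear constraint. Your bound is instead a sum over all anchor tuples $(u_1,\dots,u_k)$, and each cycle is represented by $2(2k+1)$ of them, many consisting of codegree-one pairs even in the extremal graph. Showing that essentially all cycles come from a single configuration of $k$ heavy pairs plus one heavy length-three segment \emph{is} the stability statement. For even cycles this is the substantial weighted optimization of Cox--Martin and Lv et al. For odd cycles, the length-three segment is exactly what that machinery does not handle routinely; the paper's introduction notes that the asymptotics of $f(n,C_{2k+1})$ are known only up to a factor $3/2$ for general $k$. ``Charging each such edge to a face'' is a hope rather than an argument.

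The paper never proves a global asymptotic count first. It reduces to graphs with properties (1)--(3) by deletion and symmetrization. It then uses the forbidden short odd cycles to get disjoint layers in path-decompositions and forests between consecutive layers. The empty-region lemmas produce degree-two vertices or special paths. From these it derives $\Delta_m\le\Delta_2\Delta_{m-2}+4\epsilon n^{\lfloor m/2\rfloor}$, which forces $\Delta_2\ge n/k-\epsilon' n$ (Lemma~\ref{lem: big Delta2}). The skeleton $u_1,\dots,u_k$ is then bootstrapped from a pair realizing $\Delta_2$.

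Step~3 also rests on a false claim: leftover vertices need not lie on $O(n^{k-2})$ copies of $C_{2k+1}$. Take the construction with the tree a star, $A_1=\{a_1\}$, and add a path $u_2pp'a_1$ inside the face bounded by $u_1w_1u_2w_2\cdots u_ka_2a_1u_1$. No short odd cycle appears, since the $(u_2,a_1)$-paths avoiding $p,p'$ have lengths $3$ and $2k-2$. Yet $p$ lies on the $|A_2|\prod_{i=2}^{k-1}|W_i|$ cycles $u_2pp'a_1a_2u_kw_{k-1}u_{k-1}\cdots w_2u_2$. That is exactly as many as a vertex of $W_1$, so a single-vertex swap gives no strict gain. (Even generic leftover vertices can lie on order $\xi n^{k-1}$ copies; that is harmless, but this example is not.)

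This is why the paper separates out the sets $P,Q,S,T$ of vertices on such length-three paths. It bounds the copies through the other leftover vertices only by $\xi(2k)^{5k}n^{k-1}$ (Proposition~\ref{prop: finally}). It then disposes of $P,Q,S,T$ via the final optimization (\ref{eq: finally}), which in effect moves such pairs $p,p'$ into $W_1$. Your cleanup needs an argument of this kind, and your final count must include the corresponding cross terms.
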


\begin{figure}
    \centering
    \includegraphics[width=0.5\textwidth]{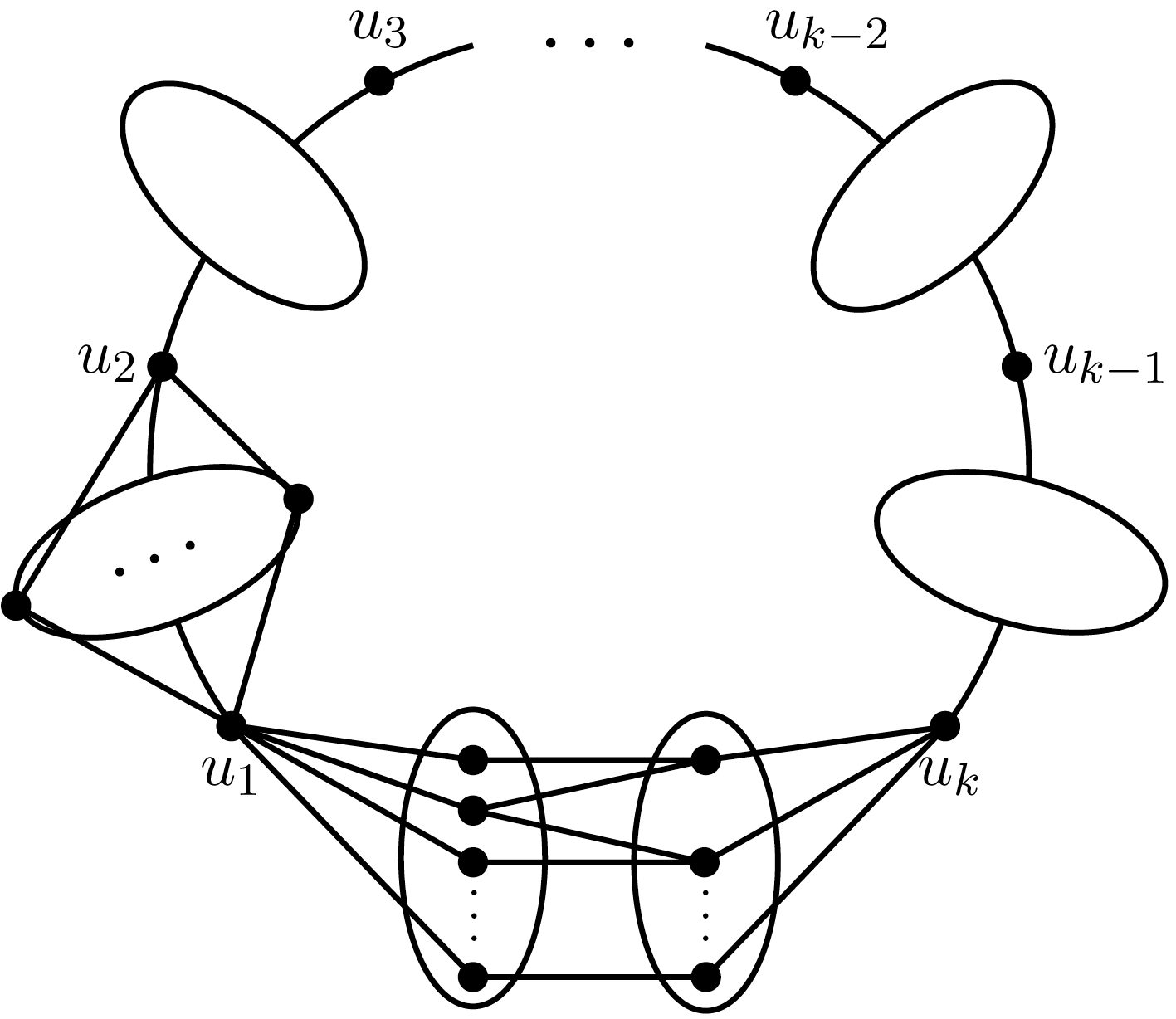}
    \caption{The extremal construction in Theorem~\ref{thm: main}.}\label{fig: extremal construction}
\end{figure}

In the non-planar case, Grzesik and Kielak~\cite{grzesik2019maximum} proved for $k \ge 3$,
\[
    \ex(n, C_{2k+1}, \mathcal{C}^o_{<2k+1}) \le {\left( \frac{n}{2k+1} \right)}^{2k+1},
\]
which is sharp for the blow-up of $C_{2k+1}$.
When all shorter odd cycles are forbidden, every $C_{2k+1}$ is induced.
% % And as a corollary, $\ex(n, C_{2k+1}, C_{2k-1}) = {(n/k)}^k + o(n^k)$.
% This problem is also closely related to the inducibility of odd cycles, since forbidding $\mathcal{C}^o_{<2k+1}$ ensures that all $C_{2k+1}$ are induced.
% Moreover, it proves the asymptotic value of $ind(C_{2k+1})$ if we forbid $C_{2k-1}$.
Both problems are closely related to the induced subgraph problem.
Let $F$ be a fixed graph.
The number of induced copies of $F$ in $G$, denoted by $\mathscr{H}(G,F)$, is the number of subsets $S$ of $V(G)$ such that $G[S] \cong F$.
The maximum number of induced copies of $F$ in an $n$-vertex graph is denoted by $i(n,F):=\max\{ \mathscr{H}(G,F)\mid |V(G)|=n\}.$
The inducibility of $F$ is denoted by $ind(F):=\lim_{n\to \infty}\frac{i(n,F)}{ \binom{n}{|V(F)|}}$.

The topic of inducibility was introduced by Pippenger and Golumbic~\cite{PIPPENGER1975189} in 1975. They proved that $ind(F) \ge \frac{k!}{k^k-k}$, where $k = v(F)$, and conjectured that this bound is tight when $H$ is $C_k$ with $k \ge 5$; that is, $ind(C_k)=\frac{k!}{k^k-k}$ for $k\geq 5$.
They also gave an upper bound that $ind(C_k) \le (2e+o(1))\frac{k!}{k^k}$.
Hefetz and Tyomkyn~\cite{hefetz2018inducibility} improved the extra factor in the upper bound from $2e$ to $128e/81$ in 2018.
The best known upper bound is now $(2+o(1))\frac{k!}{k^k}$, due to Kr\'al, Norin and Volec~\cite{norin2019bound}.
% Recently, thanks to the celebrated flag algebra method provided by Razborov~\cite{razborov2007flag}, there are some important results about the inducibility of small graphs.
Using the flag algebra method introduced by Razborov~\cite{razborov2007flag}, Balogh, J{\'o}zsef, Hu Ping and Lidick\'y~\cite{balogh2016maximum} gave the exact value of $ind(C_5)$.
% Hirst and James~\cite{hirst2014inducibility} calculate some inducibility of $4$-vertex graphs.

% Many studies considered the inducibility problem with a planar host graph. For a given graph $H$, let $f_I(n,H)$ denote the maximum number of copies of induced $H$ in an $n$-vertex planar graph.
There are also studies about the inducibility of cycles in planar graphs.
Let $f_I(n,H)$ denote the maximum number of induced copies of $H$ in an $n$-vertex planar graph.
The asymptotic value of $f_I(n, C_{2k})$ can be obtained as a corollary of the asymptotic value of $f(n,C_{2k})$~\cite{cox2022counting}.
Savery~\cite{savery2024planar} determined $f_I(n,C_4)$.
Ghosh, Gy\H{o}ri, Janzer, Paulos, Salia and Zamora~\cite{ghosh2022maximum} determined the value of $f_I(n, C_5)$ and, independently, Savery~\cite{savery2024planar} determined $f_I(n,C_5)$ and the extremal structures.
Savery~\cite{savery2021planar} later determined $f_I(n,C_6)$.

% Which means, the maximum number of copies of $C_{2k+1}$ in a planar graph on $n$ vertices forbidding $C_3, C_5, \ldots, C_{2k-1}$.

% Note that when forbidding $\mathcal{C}^o_{<2k+1}$, all $C_{2k+1}$ are induced, our result can also be viewed as a topic of the inducibility of odd cycles, which is a continuation of~\cite{ghosh2022maximum,savery2021planar,savery2024planar}.
% It is believed that the maximum number of induced copies of $C_{2k+1}$~($k \ge 3$) in a planar graph on $n$ vertices is $h_k(n)$, which is defined in the following conjecture.
As mentioned in~\cite{ghosh2022maximum,savery2024planar}, the best construction currently known for $f_I(n, C_{2k+1})$ is the following. We conjecture that it is optimal.
\begin{conjecture}\label{conj: induced odd cycle}
    For $k \ge 3$, the maximum number of induced copies of $C_{2k+1}$ in a planar graph on $n$ vertices is $h_k(n)$.
    This value can be attained by evenly blowing up $k$ pairwise non-adjacent vertices in a $C_{2k+1}$.
\end{conjecture}

Observe that our theorem confirms Conjecture~\ref{conj: induced odd cycle} under the additional assumption that all shorter odd cycles are forbidden.
Usually, the planar Tur\'an numbers are only known for small graphs.
Although forbidding all shorter odd cycles is a strong assumption, our result applies to odd cycles of arbitrary length.
Moreover, we obtain an exact value, which is also rare in planar Tur\'an problems.

The paper is organized as follows.
In Section~\ref{sec: preliminary}, we give some preliminaries and an outline of the proof.
In Section~\ref{sec: three properties}, we state Theorem~\ref{thm: new main}, which is sufficient to prove Theorem~\ref{thm: main}.
Section~\ref{sec: main proof} is devoted to proving Theorem~\ref{thm: new main}.

To keep the paper from becoming too long, we omit some routine details, especially in arguments that repeat the ideas of earlier lemmas.
Nevertheless, the proof of our main theorem is still technically involved.
One reason is that planar graphs require careful attention to how edges are embedded without crossings and to whether certain regions are empty.
We suggest that the reader first focus on the main outline of the proof; the details are then easier to follow.

% -------------------------------------------------------------
\section{Preliminaries and Proof Outline}\label{sec: preliminary}

Throughout the paper, we use the notation $O(\cdot)$, $\Omega(\cdot)$ and $\Theta(\cdot)$ in the standard way.
For example, $f(n) = O(g(n))$ means that there exists a constant $C$ such that $f(n) \le C g(n)$ for all $n$ sufficiently large.
Given a graph $G$ and a vertex $v \in V(G)$, we use $N_G(v)$ to denote the neighborhood of $v$, i.e. $N_G(v) = \{u \in V(G) \mid uv \in E(G)\}$.
Denote by $d_G(v)$ the degree of a vertex $v \in V(G)$, i.e. $d_G(v) = |N_G(v)|$.
When there is no ambiguity, we omit the subscript $G$.
For a vertex set $S$, denote by $G[S]$ the subgraph of $G$ induced by $S$.

The notation $a \ll b$ means that $a$ is relatively small compared to $b$.
In the proof, we use several constants $\ell, \epsilon, \epsilon', \xi$.
The relationship among these constants is given by the following inequality.
\begin{equation}\label{eq: relationship between constants}
    1/(2\ell+1) \le \epsilon \ll \epsilon' \ll \xi \ll 1/k.
\end{equation}

We first give rough estimates for $h_k(n)$.
\[
    {\left( \frac{n-2k-1}{k}\right)}^{k} \le h_k(n) \le {\left( \frac{n-k-1}{k}\right)}^{k}.
\]
Moreover,

\begin{equation}\label{eq: hk n - hk n-1}
\begin{aligned}
    h_k(n) - h_k(n-1) &\ge {\left(\frac{n-2k-2}{k} \right)}^{k-1};\\
    h_k(n) - h_k(n-2\ell) &\ge 2\ell {\left(\frac{n - 3\ell}{k}\right)}^{k-1}.
\end{aligned}
\end{equation}

For the rest of the paper, let $f_k(G)$ denote the number of copies of $C_{2k+1}$ in a graph $G$.
We first present a lemma about weighted bipartite forest.

\begin{lemma}\label{lem: bi tree}
    Let $H$ be a weighted bipartite forest with parts $X$ and $Y$.
    Every vertex $v$ has weight $w(v)$.
    Let $w(X) = \sum_{x \in X} w(x)$ and $w(Y) = \sum_{y \in Y} w(y)$.
    Let $w(H) = \sum_{xy \in E(H)} w(x) w(y)$.
    If $\Delta_X = \max_{x \in X} w(x)$ and $\Delta_Y = \max_{y \in Y} w(y)$, then
    \[
    w(H) \le w(X) w(Y) - (w(X) - \Delta_X)(w(Y) - \Delta_Y) = w(X) \Delta_Y + w(Y) \Delta_X - \Delta_X \Delta_Y.
    \]

\end{lemma}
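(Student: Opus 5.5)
The plan is induction on the number of vertices of $H$, repeatedly deleting a leaf or an isolated vertex. Throughout, weights are nonnegative (as in all applications), and we adopt the convention $\Delta_X=0$ if $X=\emptyset$ (similarly for $Y$). Write
\[
F(a,b,\alpha,\beta)=a\beta+b\alpha-\alpha\beta=\alpha b+\beta(a-\alpha),
\]
so the claimed bound is $w(H)\le F(w(X),w(Y),\Delta_X,\Delta_Y)$.

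\textbf{Monotonicity of $F$.} On the relevant range $0\le\alpha\le a$ and $0\le\beta\le b$:
\begin{itemize}
\item $F$ is nondecreasing in $a$, since $\partial F/\partial a=\beta\ge 0$;
\item $F$ is nondecreasing in $\alpha$, since $\partial F/\partial\alpha=b-\beta\ge0$;
\item symmetrically, $F$ is nondecreasing in $b$ and in $\beta$.
\end{itemize}
The form $\alpha b+\beta(a-\alpha)$ also shows $F\ge 0$. This is the base case: if $H$ has no edges, then $w(H)=0\le F$.

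\textbf{Inductive step.} If $H$ has an edge, it has a vertex $v$ of degree at most one, since a forest always has a leaf or an isolated vertex. By symmetry, take $v\in X$ and set $H'=H-v$. Then $X'=X\setminus\{v\}$, $w(X')=w(X)-w(v)$ and $\Delta_{X'}\le\Delta_X$, while $Y$ and $\Delta_Y$ are unchanged. If $v$ has a neighbour $y$, then $w(H)=w(H')+w(v)w(y)\le w(H')+w(v)\Delta_Y$. If $v$ is isolated, then $w(H)=w(H')$. By induction and monotonicity of $F$ in $\alpha$,
\[
w(H')\le F(w(X)-w(v),\,w(Y),\,\Delta_{X'},\,\Delta_Y)\le F(w(X)-w(v),\,w(Y),\,\Delta_X,\,\Delta_Y).
\]
Since $F$ is linear in $a$ with coefficient $\beta$,
\[
F(w(X)-w(v),w(Y),\Delta_X,\Delta_Y)=F(w(X),w(Y),\Delta_X,\Delta_Y)-w(v)\Delta_Y.
\]
Adding the edge contribution $w(v)\Delta_Y$ gives exactly $F(w(X),w(Y),\Delta_X,\Delta_Y)$, which completes the step.

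\textbf{Main obstacle.} The one subtle point is legitimising the substitution of $\Delta_{X'}$ by $\Delta_X$. The maximum-weight vertex of $X$ may be the deleted leaf, and the parameters $(a,\alpha)=(w(X)-w(v),\Delta_X)$ then lie outside the ``realisable'' range. So I would verify directly that the monotonicity inequalities $b-\beta\ge0$ and $a-\alpha\ge 0$ are only needed at the inductive hypothesis point, where they hold because it comes from the genuine forest $H'$. The value at $(w(X)-w(v),\Delta_X)$ is then simply the linear expression above, with no inequality required.

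The degenerate case where $X'$ becomes empty is covered by the convention $\Delta_{X'}=0$. In that case $w(H')=0$, and the required inequality $w(v)w(y)\le F(w(X),w(Y),\Delta_X,\Delta_Y)$ follows from $w(v)\le\Delta_X$, $w(y)\le\Delta_Y$ and $F\ge\alpha b\ge \Delta_X\Delta_Y$, which holds since $a-\alpha\ge0$.
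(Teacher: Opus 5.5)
Your proof is correct and is essentially the paper's argument: induction on $|V(H)|$, deleting a leaf $v\in X$, bounding $w(v)w(u)\le w(v)\Delta_Y$, and replacing $\Delta_{X'}$ by $\Delta_X$, which is allowed because its coefficient $w(Y)-\Delta_Y$ is nonnegative. Your explicit assumptions make precise what the paper leaves implicit: weights are nonnegative (the lemma genuinely needs this, and the paper uses it tacitly), and an empty part has maximum $0$.
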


\begin{proof}
    We prove the lemma by induction on the order of $H$.
    The assertion is immediate when $|V(H)| = 1$.
    Now assume $|V(H)| \ge 2$ and that the lemma holds for smaller graphs.
    When $H$ contains an isolated vertex, the lemma is trivial, so we may assume there is no isolated vertex in $H$.
    Since $H$ is a forest, let $v$ be a leaf vertex in $H$, without loss of generality, we may assume $v \in X$ and let $u$ be the neighbor of $v$.
    Let $H'$ be the graph $H-v$ with parts $X' = X\setminus \{v\}$ and $Y'=Y$ and the same weight for each vertex.
    By definition, $\Delta_{X'} \le \Delta_X$ and $\Delta_{Y'} = \Delta_Y$.
    By the induction hypothesis,
    \begin{equation*}
    \begin{aligned}
        w(H) &=  w(v)w(u) + w(H')  \\
        & \le (w(v) \Delta_Y + w(X')\Delta_Y) + (w(Y)\Delta_{X'} - \Delta_{X'}\Delta_Y) \\
        & \le w(X)\Delta_Y + w(Y)\Delta_X - \Delta_X \Delta_Y.
    \end{aligned}
    \end{equation*}
\end{proof}

% \begin{proposition}[\cite{2020arXiv200204579G}]\label{prop: path}
%    $ ex_P(n, P_t, \emptyset) = \Theta(n^{\lfloor (t-1)/2 \rfloor+1})$.
% \end{proposition}

We call a path from $u$ to $w$~(with endpoints $u$ and $w$) a $(u,w)$-path.
% Let ${\{A_i\}}_{1 \le i \le s}$ be a collection of disjoint vertex sets. We say a path is a ${(u,v)}_{A_1,A_2,\ldots,A_s}$-path if it is a $(u,v)$-path of length $s+1$ and its $i$-th vertex is in $A_i$ for all $1 \le i \le s$. Simply denote it by a ${(u,v)}_{\{A_i\}_{1 \le i \le s}}$-path.
For a pair $u,w \in V(G)$, let $p_i(u,w)$ be the number of $(u,w)$-paths of length $i$.
We call $(u,w)$ a $j$-valid pair if $u$ and $w$ are connected by a path of length $j$.
When studying $p_t(u,w)$, we define $A_1,\ldots, A_{t-1}$ as follows:
\begin{equation}\label{eq: Ai}
    A_i = \{v \mid \text{$\exists$ a $(u,w)$-path of length $t$ whose $i$-th vertex is $v$}\}, 1 \le i \le t-1.
\end{equation}
In this case, we call $\{A_1,\ldots, A_{t-1}\}$ a \textbf{$t$-path-decomposition} of $(u,w)$.

We call $v_1v_2\ldots v_t$ a \textbf{circuit} of length $t$ if $v_iv_{i+1}$ is an edge for all $1 \le i \le t-1$ and $v_t v_1$ is also an edge.
Unlike a cycle, a circuit is allowed to have repeated vertices.
The existence of a circuit of odd length $t$ implies the existence of an odd cycle of length at most $t$.

\begin{lemma}\label{lem: Ai Aj disjoint}
    Let $t$ be an integer with $2 \le t \le 2k$.
    Let $(u,w)$ be a $(2k+1-t)$-valid pair in a graph $G$ forbidding $\mathcal{C}^o_{<2k+1}$, and let $\{A_1,\ldots,A_{t-1}\}$ be the $t$-path-decomposition of $(u,w)$.
    Then $A_i \cap A_j = \emptyset$ for all $i \neq j$.
    Therefore, every $(u,w)$-path of length $t$ contains exactly one vertex from each $A_i$.
\end{lemma}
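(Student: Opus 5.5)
Suppose for contradiction that some vertex $v$ lies in $A_i \cap A_j$ with $1 \le i < j \le t-1$. The plan is to combine the two paths witnessing $v \in A_i$ and $v \in A_j$ with a fixed path of length $2k+1-t$ between $u$ and $w$, obtaining a closed walk (circuit) of odd length less than $2k+1$. As noted before the lemma, such a circuit contains an odd cycle of length at most its own length, contradicting that $G$ forbids $\mathcal{C}^o_{<2k+1}$.

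Concretely, since $v \in A_i$, there is a $(u,w)$-path $P$ of length $t$ whose $i$-th vertex is $v$. Take $P$ to be indexed so that $u$ is the $0$-th vertex. Then the subpath of $P$ from $u$ to $v$ has length $i$, and the subpath from $v$ to $w$ has length $t-i$. Similarly, $v \in A_j$ gives a path $Q$ with a $(u,v)$-walk of length $j$ and a $(v,w)$-walk of length $t-j$. Let $R$ be a $(u,w)$-path of length $2k+1-t$, which exists by validity.

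Next I would form two circuits.
\begin{itemize}
\item Circuit 1 goes from $u$ to $v$ along $P$ (length $i$) and from $v$ back to $u$ along $Q$ reversed (length $j$). Its total length is $i+j$.
\item Circuit 2 goes from $u$ to $v$ along $P$ (length $i$), from $v$ to $w$ along $Q$ (length $t-j$), and from $w$ back to $u$ along $R$ (length $2k+1-t$). Its total length is $2k+1-(j-i)$.
\end{itemize}
Since $j-i \ge 1$, Circuit 2 is shorter than $2k+1$. If $j-i$ is even, then $2k+1-(j-i)$ is odd and at most $2k-1$, giving the contradiction. If $j-i$ is odd, then $i+j$ is odd, and $i+j \le 2t-3 \le 4k-3$. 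This bound may exceed $2k-1$, so Circuit 1 needs more care.

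This is the main obstacle, and the fix is to choose the shorter option when $j-i$ is odd. Besides Circuit 1, consider Circuit 3: from $v$ to $w$ along $P$ (length $t-i$) and from $w$ back to $v$ along $Q$ reversed (length $t-j$). Its length is $2t-i-j$, which is also odd. Circuits 1 and 3 have total length $2t \le 4k$, so the shorter one has odd length at most $2k$, hence at most $2k-1$. That gives an odd closed walk of length less than $2k+1$ in either case.

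The degenerate situation of a circuit of length $1$ cannot occur. A loop does not exist, and in any case the odd circuit still contains an odd cycle of length at least $3$, by the standard fact that a minimal odd closed sub-walk is a cycle. This is where the remark preceding the lemma is invoked.

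Finally, a $(u,w)$-path of length $t$ has its interior vertices at positions $1,\dots,t-1$, and the vertex at position $i$ lies in $A_i$. So the path meets each $A_i$. Since the $A_i$ are pairwise disjoint, the path contains exactly one vertex from each $A_i$, which gives the last sentence of the lemma.
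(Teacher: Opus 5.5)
Your proposal is correct and follows the paper's proof: the same parity split on $j-i$, the same circuit of length $2k+1-(j-i)$ through the $(2k+1-t)$-path when $i\equiv j \pmod 2$, and the same pair of odd circuits of lengths $i+j$ and $2t-i-j$ otherwise. The only difference is cosmetic: you take the shorter of the two circuits, using that their lengths sum to $2t\le 4k$, whereas the paper splits on whether $i+j<2k+1$, which amounts to the same thing.
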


\begin{proof}
    The vertices $u,w$ are connected by a path of length $(2k+1-t)$.
    Assume the path is $u v_1 v_2 \ldots v_{2k-t} w$.

    First, if $i \equiv j \pmod 2$, $i < j$, and $x \in A_i \cap A_j$, then there exists a $(u,x)$-path of length $i$, and a $(x,w)$-path of length $t-j$.
    Combining the path $u v_1 v_2 \ldots v_{2k-t} w$ of length $(2k+1-t)$, we obtain a circuit of length $(2k+1-t) + i + (t - j) = (2k+1) + i-j$, which yields a shorter odd circuit, a contradiction.

    Second, if $i \not\equiv j \pmod 2$, $i < j$, and $x \in A_i \cap A_j$, then there exists a $(u,x)$-path of length $i$, and a $(u,x)$-path of length $j$.
    If $i+j < 2k+1$, then the circuit obtained by combining the two paths has odd length $i+j < 2k+1$, a contradiction.
    Otherwise, $i+j \ge 2k+1$, then there exists a $(x,w)$-path of length $t-i$ and a $(x,w)$-path of length $t-j$.
    Similarly, the circuit obtained by combining the two paths has odd length $2t - (i+j) \le 2t - (2k+1) < 2k+1$, a contradiction.
\end{proof}

\begin{lemma}\label{lem: t-decomp and 2k+1-t-decomp}
    Let $t$ be an integer with $2 \le t \le 2k-1$.
    Let $(u,w)$ be a pair which is both $(2k+1-t)$-valid and $t$-valid in a graph $G$ forbidding $\mathcal{C}^o_{<2k+1}$.
    Let $\{A_1,\ldots,A_{t-1}\}$ be the $t$-path-decomposition of $(u,w)$, and $\{B_1,\ldots,B_{2k-t}\}$ be the $(2k+1-t)$-path-decomposition of $(u,w)$.
    Then $A_i \cap B_j = \emptyset$ for all $i,j$.
\end{lemma}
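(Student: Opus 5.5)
The plan is a parity argument of the same kind as in Lemma~\ref{lem: Ai Aj disjoint}: a common vertex of the two decompositions would split the two paths into four pieces, and these pieces recombine into two closed walks whose lengths add up to $2k+1$.

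Suppose, for contradiction, that $x \in A_i \cap B_j$ for some $1 \le i \le t-1$ and $1 \le j \le 2k-t$. Note that $t \le 2k-1$ ensures $2k+1-t \ge 2$, so the index range of the $B_j$ is nonempty. By the definition of the path-decompositions, $x$ is the $i$-th vertex of some $(u,w)$-path $P$ of length $t$. It is also the $j$-th vertex of some $(u,w)$-path $Q$ of length $2k+1-t$. Splitting both paths at $x$ gives:
\begin{itemize}
\item a $(u,x)$-path of length $i$ and a $(x,w)$-path of length $t-i$ (from $P$);
\item a $(u,x)$-path of length $j$ and a $(x,w)$-path of length $2k+1-t-j$ (from $Q$).
\end{itemize}

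Gluing the two $(u,x)$-paths gives a circuit of length $i+j$. Gluing the two $(x,w)$-paths gives a circuit of length $(t-i)+(2k+1-t-j) = 2k+1-(i+j)$.

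These two lengths sum to $2k+1$, which is odd, so exactly one of them is odd. Each length is at least $2$, since $i,j \ge 1$ and $i+j \le (t-1)+(2k-t) = 2k-1$. Each is also at most $2k-1 < 2k+1$.

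So we obtain an odd circuit of length at most $2k-1$. An odd circuit has length at least $3$, because graphs have no loops. As noted after the definition of a circuit, an odd circuit of length $s$ contains an odd cycle of length at most $s$. This gives a member of $\mathcal{C}^o_{<2k+1}$ in $G$, a contradiction.

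There is no real obstacle here. The only points to check are the index bounds that make both circuit lengths lie strictly between $1$ and $2k+1$, and these follow directly from the ranges of $i$ and $j$.
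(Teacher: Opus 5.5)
Your proof is correct and is essentially the paper's argument. The paper concatenates the two $(u,w)$-paths into a closed walk of length $2k+1$ (a homomorphic image of $C_{2k+1}$) that passes through $x$ twice, and concludes it contains a shorter odd cycle. You make this explicit by splitting at $x$ into closed walks of lengths $i+j$ and $2k+1-(i+j)$, both in $[2,2k-1]$, one of which is odd; your bound $i+j\le 2k-1$ is exactly what ensures the odd cycle is strictly shorter than $2k+1$.
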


\begin{proof}
    If there exists $x \in A_i \cap B_j$, then we can find a homomorphic image of a $C_{2k+1}$ in $G$, where the image has two identical vertices.
    Note that a homomorphic image of an odd cycle is a circuit and hence contains an odd cycle.
    This is a contradiction.
\end{proof}

% Let $(u,w)$ be a $2k+1-t$-valid pair, and $\{A_1,\ldots,A_{t-1}\}$ be a $t$-path-decomposition of $(u,w)$.
% Then we have the following observation.
By Lemma~\ref{lem: Ai Aj disjoint}, Lemma~\ref{lem: t-decomp and 2k+1-t-decomp}, and planarity, we have the following observation.

\begin{observation}\label{obs: forest between Ai and Ai+1}
    Let $3 \le t \le 2k$ be a constant and $(u,w)$ be a $(2k+1-t)$-valid pair, and $\{A_1,\ldots,A_{t-1}\}$ be the $t$-path-decomposition of $(u,w)$.
    For each $1 \le i \le t-2$, the edges between $A_i$ and $A_{i+1}$ form a forest.
\end{observation}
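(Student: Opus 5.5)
The claim is that the bipartite graph $G[A_i, A_{i+1}]$, formed by the edges between $A_i$ and $A_{i+1}$, is acyclic. I will argue by contradiction: suppose it contains a cycle $C$. By Lemma~\ref{lem: Ai Aj disjoint} the sets $A_i$ and $A_{i+1}$ are disjoint, so $C$ is an even cycle alternating between $A_i$ and $A_{i+1}$, and it has at least two vertices in each set. The aim is to contradict planarity by exhibiting a $K_{3,3}$-subdivision, or equivalently a forbidden configuration relative to the plane embedding. The other way to get a contradiction, a short odd circuit, is ruled out by the same parity arguments as in Lemmas~\ref{lem: Ai Aj disjoint} and~\ref{lem: t-decomp and 2k+1-t-decomp}.

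\textbf{Step 1: connecting $C$ to $u$ and $w$.} Every $x \in A_i$ lies on a $(u,w)$-path of length $t$ at position $i$. Hence $x$ has a $(u,x)$-path of length $i$ through $A_1,\dots,A_{i-1}$, and an $(x,w)$-path of length $t-i$ through $A_{i+1},\dots,A_{t-1}$. The analogous statement holds for each $y \in A_{i+1}$. By Lemma~\ref{lem: Ai Aj disjoint}, these "left" paths only use vertices of $\{u\}\cup A_1\cup\dots\cup A_{i}$. Similarly, the "right" paths only use vertices of $A_{i+1}\cup\dots\cup A_{t-1}\cup\{w\}$.

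\textbf{Step 2: the third connection.} The pair $(u,w)$ is $(2k+1-t)$-valid, so there is a $(u,w)$-path $P$ of length $2k+1-t$. By Lemma~\ref{lem: t-decomp and 2k+1-t-decomp} (in the range where it applies), or directly by the circuit-parity argument, the interior of $P$ avoids every $A_j$. Indeed, a vertex of $P$ lying in $A_j$ would give a homomorphic image of a shorter odd circuit.

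\textbf{Step 3: building a minor.} This gives three vertex-disjoint "sides": the region $L$ containing $u$ and the left paths, the region $R$ containing $w$ and the right paths, and the path $P$ joining $u$ to $w$. Pick two vertices $a,a'\in A_i$ and two vertices $b,b'\in A_{i+1}$ that appear in order $a,b,a',b'$ around $C$. In the graph $L\cup\{a,a'\}$, contract the left paths to $u$; in $R\cup\{b,b'\}$, contract the right paths to $w$. Finally, contract $P$ to a single edge $uw$. Then $u$ is adjacent to $a$ and $a'$, $w$ is adjacent to $b$ and $b'$, $u$ is adjacent to $w$, and $C$ becomes a cycle through $a,b,a',b'$.

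This configuration is a cycle together with two "chords through the outside" attached at interleaving points. Together with the vertex-disjointness, it yields a $K_{3,3}$ minor, which contradicts planarity. Equivalently, in the embedding, $C$ separates the plane; $u$ and $w$ must both lie on the side that contains the connected set $L\cup R\cup P$; and then the interleaved connections $u$–$\{a,a'\}$ and $w$–$\{b,b'\}$ must cross.

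\textbf{Main obstacle.} The hardest part is making the paths genuinely disjoint. Left paths from $a$ and $a'$ may merge before reaching $u$, which is harmless because they still form a connected set containing $u$. What must be checked is that the left paths never touch $b$, $b'$, or $R$, and that $P$'s interior avoids $L\cup R$; both rely on Lemma~\ref{lem: Ai Aj disjoint}. One further point needs care: the left paths must not use the other vertices of $C$ that lie in $A_i$. I would handle this by taking a cycle $C$ of minimal length and, if needed, replacing $a$ or $a'$ by the first vertex of $C$ met along the path. The boundary cases $i=1$ (where $L=\{u\}$) and $i=t-2$ (where $R=\{w\}$) work the same way.
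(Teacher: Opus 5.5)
Your proposal is correct and follows the paper's route: the paper gives no written proof beyond citing Lemma~\ref{lem: Ai Aj disjoint}, Lemma~\ref{lem: t-decomp and 2k+1-t-decomp} and planarity, and your $K_{3,3}$-minor construction (branch sets around $u$ and $w$, the $(2k+1-t)$-path as the $u$--$w$ link avoiding $A$, and an alternating cycle) is exactly that argument written out. The minimality fix in your last paragraph is unnecessary. The interior vertices of a left path from $a$ lie in $A_1\cup\cdots\cup A_{i-1}$, which Lemma~\ref{lem: Ai Aj disjoint} already makes disjoint from $A_i\cup A_{i+1}$, so the left paths cannot meet other vertices of the cycle.
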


Define
\begin{equation}\label{eq: Delta}
\begin{aligned}
    \Delta_i =  \max \{p_i(u,w) \mid (u,w)\text{~is a $(2k+1-i)$-valid pair}\}, i=2,3,\ldots,2k.
\end{aligned}
\end{equation}
It is important to note that $(u,w)$ must be a $(2k+1-i)$-valid pair in the definition of $\Delta_i$.
% \begin{equation}\label{eq: Delta 2 and Delta 3}
% \begin{aligned}
%     &\Delta_2 = \{p_2(u,w) \mid (u,w)\text{~is a $5$-valid-pair}\},\\
%     &\Delta_3 = \{p_3(u,w) \mid (u,w)\text{~is a $4$-valid-pair}\}.
% \end{aligned}
% \end{equation}

\begin{lemma}\label{lem: size lower bound of A}
    Let $G$ be a planar graph without $\mathcal{C}^o_{<2k+1}$ on $n$ vertices.
    Let $(u,w)$ be a $(2k+1-t)$-valid pair in $G$ with $2 \le t \le 2k$.
    Let $\{A_1,\ldots,A_{t-1}\}$ be the $t$-path-decomposition of $(u,w)$ and $A = \bigcup_{i=1}^{t-1}A_i$.
    Then
    \[
        |A| \ge \left\lceil \frac{t-1}{2} \right\rceil {{p_t(u,w)}}^{1 / \lceil (t-1)/2 \rceil}.
    \]

    Consequently, for every integer $t$ with $2 \le t \le 2k$,
    \[
        {{p_t(u,w)}} \le {|A|}^{\lceil \frac{t-1}{2} \rceil}.
    \]
\end{lemma}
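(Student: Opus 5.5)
Write $s = \lceil (t-1)/2\rceil$ and $P = p_t(u,w)$. The plan is to bound $P$ by the product of the sizes of $s$ of the sets $A_i$, and then apply AM--GM.

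\textbf{Key structural step.} By Lemma~\ref{lem: Ai Aj disjoint}, the sets $A_1,\ldots,A_{t-1}$ are pairwise disjoint, and every $(u,w)$-path of length $t$ is of the form $u a_1 a_2\cdots a_{t-1} w$ with $a_i\in A_i$. I claim that such a path is essentially determined by its vertices in the odd-indexed sets $A_1,A_3,A_5,\ldots$ (and $A_{t-1}$ when $t-1$ is even). Suppose two distinct paths share their vertices $a_{i-1}$ and $a_{i+1}$ but use different middle vertices $b\ne b'\in A_i$. Then $a_{i-1}ba_{i+1}b'$ is a $4$-cycle. More generally, consider the edges between consecutive layers $A_{i-1}\cup A_{i+1}$ and $A_i$. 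By Observation~\ref{obs: forest between Ai and Ai+1}, the edges between $A_{i-1}$ and $A_i$ form a forest, and likewise between $A_i$ and $A_{i+1}$. The forest property alone does not forbid two common neighbours, so I would use planarity more carefully.

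\textbf{Simpler alternative.} I would count paths directly by the choices at the odd positions $1,3,\ldots$. Each such choice is a vertex from a set of size at most $|A_i|$. For the even positions, $a_{2j}$ must be a common neighbour of $a_{2j-1}$ and $a_{2j+1}$, so a counting argument is still needed. To handle this, I would sum over the choices of even-position vertices instead, weighting by forests as in Lemma~\ref{lem: bi tree}.

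\textbf{Cleanest route.} Induct on the number of layers, using Lemma~\ref{lem: bi tree}: the number of paths through two consecutive layers $A_i,A_{i+1}$ is controlled by the forest $H$ between them. Taking weights $w(x)$ equal to the number of partial paths from $u$ ending at $x$, and $w(y)$ equal to the number of partial paths from $y$ to $w$, Lemma~\ref{lem: bi tree} gives
\[
P\le w(A_i)\Delta_Y+w(A_{i+1})\Delta_X.
\]
Each layer pair thus contributes roughly a factor of $|A_i|+|A_{i+1}|$ rather than the product. Peeling off two layers at a time yields
\[
P\le\prod_{j=1}^{s}\bigl(|A_{2j-1}|+|A_{2j}|\bigr),
\]
where a final single layer contributes $|A_{t-1}|$ alone. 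The edges from $u$ to $A_1$ and from $A_{t-1}$ to $w$ are stars, which are trees, so the endpoint layers fit the same framework. This pairing is the main obstacle, and I expect an induction on $t$ of the form $p_t\le(|A_1|+|A_2|)\cdot p'_{t-2}$ to work. Here $p'_{t-2}$ counts paths from $A_2$-vertices to $w$, and the bound is obtained by applying Lemma~\ref{lem: bi tree} to the forest between $A_1$ and $A_2$ with weights $1$ on $A_1$ and path counts on $A_2$.

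\textbf{Conclusion.} The $s$ groups of layers are disjoint and together have total size $|A|$. By AM--GM,
\[
P\le \left(\frac{|A|}{s}\right)^{s},
\]
which gives $|A|\ge s\,P^{1/s}$. The consequence follows since $s\,P^{1/s}\ge P^{1/s}$ implies $P\le|A|^s$.
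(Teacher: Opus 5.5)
Your final bound $P\le\prod_{j=1}^{s}(|A_{2j-1}|+|A_{2j}|)$ (with $|A_{t-1}|$ as the last factor when $t$ is even) and your AM--GM finish are exactly what the paper proves. The paper, however, gets the product in one line, with no Lemma~\ref{lem: bi tree} and no induction:

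\begin{itemize}
\item By Lemma~\ref{lem: Ai Aj disjoint}, every length-$t$ path is $ua_1\cdots a_{t-1}w$ with $a_i\in A_i$.
\item Such a path is determined by the tuple of edges $(a_1a_2,a_3a_4,\ldots)$, together with $a_{t-1}$ when $t$ is even.
\item Hence $p_t(u,w)\le\prod_i|E_i|$ (times $|A_{t-1}|$), where $E_i$ is the edge set between $A_{2i-1}$ and $A_{2i}$.
\item Each $E_i$ is a forest by Observation~\ref{obs: forest between Ai and Ai+1}, so $|E_i|\le|A_{2i-1}|+|A_{2i}|$.
\end{itemize}

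You were right to abandon your first two attempts. A path is \emph{not} determined by its odd-position vertices, because $4$-cycles $a_{i-1}ba_{i+1}b'$ are allowed; the extremal graph is full of them.

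The step you leave as ``I expect \ldots\ to work'' is where care is needed. The quantity carried through the induction must be the \emph{maximum} $\Delta_Y=\max_{y\in A_2}$ of the number of layered $(y,w)$-paths $ya_3\cdots a_{t-1}w$. It cannot be the total number of such paths starting anywhere in $A_2$, which is what ``$p'_{t-2}$ counts paths from $A_2$-vertices to $w$'' suggests.

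\begin{itemize}
\item With the maximum it works: put weights $1$ on $A_1$; then Lemma~\ref{lem: bi tree}, or simply $|E_1|\le|A_1|+|A_2|-1$, gives $p_t(u,w)\le(|A_1|+|A_2|-1)\Delta_Y$, and this iterates correctly.
\item With the total it fails. Take $t=5$, $A_1=\{x\}$ and $A_3=\{z\}$, with $x$ and $z$ both joined to all $m$ vertices of $A_2$, and $z$ and $w$ both joined to all $m$ vertices of $A_4$. The total number of tails from $A_2$ is then $m^2$, far more than $|A_3|+|A_4|=m+1$.
\end{itemize}

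You should also state the inductive hypothesis for a general layered system: disjoint layers with forests between consecutive layers. The tails live in subsets of $A_3,\ldots,A_{t-1}$, and you do not want to re-verify the validity hypotheses of the lemma for $(y,w)$.

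With these two fixes your argument is correct. It is just a longer route to the paper's direct injection into $E_1\times E_2\times\cdots$.
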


\begin{proof}

    Let $E_i$ be the set of edges between $A_{2i-1}, A_{2i}$, $1 \le i \le \lfloor (t-1)/2 \rfloor$.
    Since $(u,w)$ is $(2k+1-t)$-valid, there exists a $(u,w)$-path of length $(2k+1-t)$ avoiding $A$ by Lemma~\ref{lem: t-decomp and 2k+1-t-decomp}.
    By Observation~\ref{obs: forest between Ai and Ai+1}, $E_i$ forms a forest.
    Therefore, $|E_i| \le |A_{2i-1}| + |A_{2i}|$.

    \textbf{Case 1: $t$ is odd}.
    $p_t(u,w)$ is at most $\prod_{i=1}^{(t-1)/2}|E_i|$.
    Therefore, we have
    \[
    \sum_{i=1}^{(t-1)/2}|E_i| \ge  \frac{t-1}{2} {\left(\prod_{i=1}^{(t-1)/2}|E_i|\right)}^{1/((t-1)/2)} \ge \frac{t-1}{2} {p_t(u,w)}^{2/(t-1)}.
    \]

    \textbf{Case 2: $t$ is even}.
    $p_t(u,w)$ is at most $\left( \prod_{i=1}^{\lfloor (t-1)/2 \rfloor}|E_i| \right)  |A_{t-1}|$.
    Similarly, we have
    \[
    \sum_{i=1}^{t/2 - 1}|E_i| + |A_{t-1}| \ge  \frac{t}{2}{\left( \left( \prod_{i=1}^{\lfloor (t-1)/2 \rfloor}|E_i| \right)  |A_{t-1}| \right)}^{2/t} \ge \frac{t}{2} {{p_t(u,w)}}^{2/t}.
    \]

    Consequently,
    \[
    |A| = \sum_{i=1}^{t-1}|A_i| \ge \left\lceil \frac{t-1}{2} \right\rceil {{p_t(u,w)}}^{1 / \lceil (t-1)/2 \rceil}.
    \]
    % Note that when $t=1$, $p_t(u,w) \le 1 = |A|^{\lceil (t-1)/2 \rceil}$ which is trivial.
\end{proof}

Suppose there exists a $(2k-2)$-valid pair $(u_1,u_2)$, and let $\{A_1,A_2\}$ be the $3$-path-decomposition of $(u_1,u_2)$.
If there exists a path $x_{1}y_{1} \ldots x_{3\ell} y_{3\ell}$ such that $x_i \in A_1$ and $y_i \in A_2$ for all $1 \le i \le 3\ell$ and every $x_i,y_i$ has degree three, then we call the path $u_1x_{\ell} y_{\ell} u_2$ a \textbf{special path}.

We define three properties of $\mathcal{C}^o_{<2k+1}$-free planar graphs.
In Section~\ref{sec: three properties}, we prove that these properties may be assumed throughout the main proof.

\begin{enumerate}[label=(\arabic*)]
    \item For every vertex $v$, the number of copies of $C_{2k+1}$ containing $v$ is at least $h_k(n)-h_k(n-1)$.
    Moreover, for every set of $2\ell$ vertices $v_1,\ldots,v_{2\ell}$, the number of copies of $C_{2k+1}$ containing at least one of $v_1,\ldots,v_{2\ell}$ is at least $h_k(n)-h_k(n-2\ell)$.
    \item If $v_1$ and $v_2$ are two vertices of degree two in $G$ with different neighborhoods, then they must be contained in a common $C_{2k+1}$.
    \item Let $v_1$ and $v_2$ be two vertices of degree two in $G$ with the same neighbors, and let $(u_1,u_2)$ be a $(2k-2)$-valid pair in $G$.
    % Let $\{A_1,A_2\}$ be a $3$-path-decomposition of $(u_1,u_2)$.
    If there exists a special path $u_1x_{\ell} y_{\ell} u_2$, then $v_1$ and the path $u_1x_{\ell}y_{\ell} u_2$ must be contained in some common $C_{2k+1}$.
    % ~(the definition of a special path will be given later in this section).
    % $x_{1}y_{1} \ldots x_{3\ell} y_{3\ell}$ such that $x_i\in A_1$ and $y_i \in A_2$ for every $1 \le i \le 3\ell$ and every $x_i,y_i$ has degree three.
    % Then $v_1$, $u_1x_{\ell},y_{\ell} u_2$ must be contained in some common $C_{2k+1}$.
\end{enumerate}

The extremal graph has many vertices of degree two, which are very useful in our proof.
Indeed, once we find a vertex $u$ of degree two with neighbors $u_1,u_2$, property (1) gives $p_{2k-1}(u_1,u_2) \ge {((n-2k-2)/k)}^{k-1}$.
A large value of $p_{2k-1}(u_1,u_2)$ is precisely what we need.

% A large $p_5(u_1,u_2)$ is what we really need.
We will prove in Corollary~\ref{cor: large common neighbor} that if $u$ and $w$ have many common neighbors~($p_2(u,w) \ge 800k^{k}$), then one of these common neighbors has degree two.
Therefore, we hope to find such $u$ and $w$ with a large $p_2(u,w)$, that is, with a large $2$-path-decomposition.

However, it is not always possible to find such a structure.
The following lemma shows that if $u$ and $w$ satisfy suitable hypotheses relative to their $3$-path-decomposition (rather than having a large $2$-path-decomposition), then we can still obtain a comparable lower bound on $p_{2k-2}(u,w)$ by appealing to property~(1), at the expense of an $\epsilon$ factor.

\begin{lemma}\label{lem: with long path}
    Let $G$ be a planar graph forbidding $\mathcal{C}^o_{<2k+1}$ on $n$ vertices with property (1).
    Let $(u_1,u_2)$ be a $(2k-2)$-valid pair in $G$.
    Let $\{A_1,A_2\}$ be the $3$-path-decomposition of $(u_1,u_2)$.
    If there exists a path $x_{1}y_{1} \ldots x_{3\ell} y_{3\ell}$ such that $x_i \in A_1$ and $y_i \in A_2$ for all $1 \le i \le 3\ell$, and every $x_i,y_i$ is of degree three~($u_1x_{\ell} y_{\ell} u_2$ is a special path), then the number of $C_{2k+1}$ containing $x_\ell$~(or $y_{\ell}$) is exactly $2p_{2k-2}(u_1,u_2)$, and
    \[
        p_{2k-2}(u_1,u_2) \ge \left(1-\frac{1}{2\ell+1}\right) {\left( \frac{n-3\ell}{k} \right)}^{k-1} \ge (1-\epsilon){\left( \frac{n-3\ell}{k} \right)}^{k-1}.
    \]
\end{lemma}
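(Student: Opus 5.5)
We first pin down the local structure around the long path. The vertex $x_i$ (for $2\le i\le 3\ell$) has degree three: it is adjacent to $u_1$ (since $x_i\in A_1$, via some $(u_1,u_2)$-path $u_1 x y u_2$), and on the path it is adjacent to $y_{i-1}$ and $y_i$. So $N(x_i)=\{u_1,y_{i-1},y_i\}$, and likewise $N(y_i)=\{u_2,x_i,x_{i+1}\}$ for $1\le i\le 3\ell-1$. This requires checking that the neighbor of $x_i$ forced by $x_i\in A_1$ is $u_1$ and not $u_2$. That holds because $x_i$ adjacent to $u_2$ would put $x_i\in A_2$ as well, which Lemma~\ref{lem: Ai Aj disjoint} forbids. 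Hence $x_\ell$ has neighbors $u_1,y_{\ell-1},y_\ell$.

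Next we count the $C_{2k+1}$ through $x_\ell$. Such a cycle uses two of the three edges at $x_\ell$, and we show that the pair $\{y_{\ell-1},y_\ell\}$ is impossible. Otherwise the cycle continues through $y_\ell$ to $x_{\ell+1}$ or $u_2$, and through $y_{\ell-1}$ to $x_{\ell-1}$ or $u_2$. Following degree-three vertices along the zigzag, the cycle must eventually exit to $u_1$ or $u_2$ from both sides. The resulting closed walk gives a $(u_1,u_2)$-, $(u_1,u_1)$- or $(u_2,u_2)$-connection whose parity clashes with the $(2k-2)$-path between $u_1$ and $u_2$, or it gives a short odd cycle. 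A cleaner way to see this is that $G[\{u_1,u_2\}\cup A_1\cup A_2]$ contains the 2-colorable zigzag, and each portion of the cycle inside it is a $u_i$–$u_j$ path. A cycle of length $2k+1$ through $x_\ell$ must therefore enter via $u_1x_\ell$ and leave via $x_\ell y_{\ell-1}$ or $x_\ell y_\ell$. From $y$ it must go to $u_2$: otherwise it travels along the path into $x_{\ell\pm1}$ and then to $u_1$, forming with $u_1x_\ell$ a closed segment, which is a contradiction. Continuing along the path toward $x_{\ell\pm 2}$ stays within the degree-three zigzag of length $3\ell$, and since $\ell$ is large compared with the needed lengths, the cycle cannot escape except through $u_1$ or $u_2$. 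The two choices give the two paths $u_1x_\ell y_{\ell}u_2$ and $u_1x_\ell y_{\ell-1}u_2$. Each of them is completed to a $C_{2k+1}$ by exactly the $(u_2,u_1)$-paths of length $2k-2$, which avoid $A_1\cup A_2$ by Lemma~\ref{lem: t-decomp and 2k+1-t-decomp}. This gives exactly $2p_{2k-2}(u_1,u_2)$. The main obstacle is making this exclusion argument rigorous: any cycle through $x_\ell$ using the zigzag must traverse it to an end, which requires length at least about $2\ell$, so one should show that such traversals force parity violations or lengths exceeding $2k+1$. We would handle this by a parity argument using that every zigzag vertex in $A_1$ is adjacent to $u_1$ and every one in $A_2$ to $u_2$.

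For the inequality, apply property~(1) to the $2\ell$ vertices $x_{\ell+1},\dots,x_{3\ell}$ restricted to every other index, choosing $2\ell$ pairwise suitable vertices among $x_1,\dots,x_{3\ell}$, e.g.\ $x_{\ell},\dots,x_{3\ell-1}$ roughly. Each of them lies in exactly $2p_{2k-2}(u_1,u_2)$ cycles by the same argument applied at each index. Consecutive ones share the paths through their common $y$, so the union of cycles through $x_j$ for $j$ in a block of $2\ell$ consecutive indices is counted via the $2\ell+1$ paths $u_1x_jy_{j'}u_2$ that are used. The total is at most $(2\ell+1)p_{2k-2}(u_1,u_2)$. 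Property~(1) and \eqref{eq: hk n - hk n-1} then give $(2\ell+1)p_{2k-2}\ge 2\ell\,((n-3\ell)/k)^{k-1}$, which is the claimed bound with $1-\frac1{2\ell+1}\ge 1-\epsilon$ by \eqref{eq: relationship between constants}.
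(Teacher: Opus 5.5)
Your final counting step fails. You apply property~(1) to $2\ell$ vertices taken only from $A_1$ (e.g.\ $x_\ell,\dots,x_{3\ell-1}$). You then assert that consecutive ones ``share the paths through their common $y$'', so that only $2\ell+1$ paths $u_1x_jy_{j'}u_2$ occur. That is false. The vertices $x_j$ and $x_{j+1}$ share the vertex $y_j$, but $u_1x_jy_ju_2$ and $u_1x_{j+1}y_ju_2$ are different paths. Moreover, no $C_{2k+1}$ contains both $x_j$ and $x_{j+1}$: by your own structural description, each cycle through a zigzag vertex is $u_1xyu_2$ plus a $(u_2,u_1)$-path of length $2k-2$ avoiding $A_1\cup A_2$. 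So the cycle families through the chosen $x_j$ are pairwise disjoint, and the middle edges used are the $4\ell$ distinct edges $y_{j-1}x_j$, $x_jy_j$. Hence the number of $C_{2k+1}$ meeting your set is $4\ell\,p_{2k-2}(u_1,u_2)$, not at most $(2\ell+1)p_{2k-2}(u_1,u_2)$. Property~(1) then only yields $p_{2k-2}(u_1,u_2)\ge\frac12\left(\frac{n-3\ell}{k}\right)^{k-1}$. This falls short of the stated bound by a factor of about two, and the factor $1-\epsilon$ is exactly what Lemma~\ref{lem: big Delta2} later needs.

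The fix, as in the paper, is to take $2\ell$ \emph{consecutive} zigzag vertices alternating between $A_1$ and $A_2$, namely $x_\ell,y_\ell,\dots,x_{2\ell-1},y_{2\ell-1}$. Every $C_{2k+1}$ meeting this set contains exactly one of the $2\ell+1$ edges $y_{\ell-1}x_\ell,x_\ell y_\ell,\dots,x_{2\ell-1}y_{2\ell-1},y_{2\ell-1}x_{2\ell}$, plus a $(u_1,u_2)$-path of length $2k-2$. So the union has size exactly $(2\ell+1)p_{2k-2}(u_1,u_2)$, and comparing with $h_k(n)-h_k(n-2\ell)\ge 2\ell\left(\frac{n-3\ell}{k}\right)^{k-1}$ gives the claim.

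The exclusion argument, which you leave as ``the main obstacle'', closes with one observation you never state explicitly: every $C_{2k+1}$ in $G$ is induced, since a chord would split it into two shorter cycles, one of them odd.
\begin{itemize}
\item A cycle through $y_{\ell-1}x_\ell y_\ell$ contains neither $u_1$ (chord $u_1x_\ell$) nor $u_2$ (one of $u_2y_{\ell-1}$, $u_2y_\ell$ would be a chord unless the cycle is a $4$-cycle). By the degree-three condition it must then follow the zigzag in both directions, which takes far more than $2k+1$ vertices since $\ell\gg k$.
\item A cycle through $u_1x_\ell y_\ell$ cannot continue to $x_{\ell+1}$, because $u_1x_{\ell+1}$ would be a chord; so it continues to $u_2$.
\end{itemize}
No separate parity bookkeeping is needed.

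A minor point: $x_iu_1\in E(G)$ holds directly by the definition of $A_1$. Your justification that adjacency to $u_2$ ``would put $x_i\in A_2$'' is not correct as stated. It is also unnecessary, since the degree-three condition already determines $N(x_i)$.
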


\begin{proof}

    \begin{figure}
        \centering
        \includegraphics[width=0.4\textwidth]{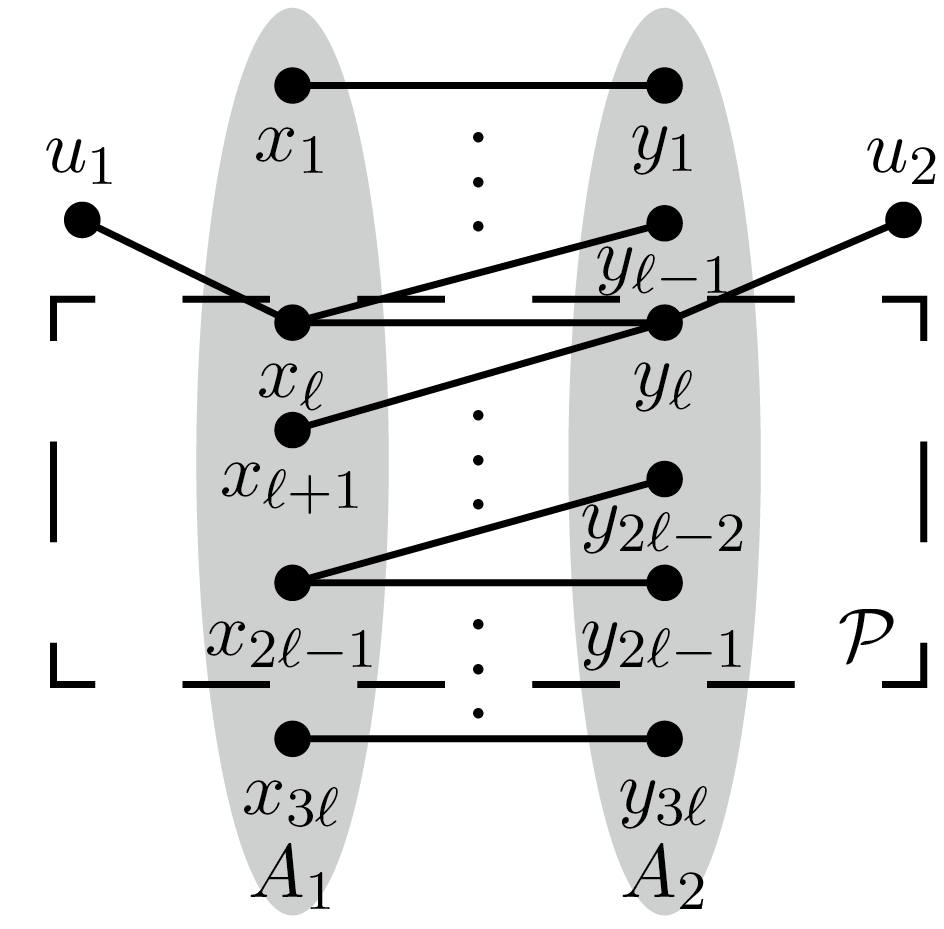}
        \caption{The path from $x_{1}$ to $y_{3\ell}$.}\label{fig: long path}
    \end{figure}

    Let $\mathcal{P} = \{x_{\ell}, y_{\ell}, \ldots, x_{2\ell-1}, y_{2\ell-1}\}$~(see Figure~\ref{fig: long path}).
    We now count the copies of $C_{2k+1}$ containing at least one vertex in $\mathcal{P}$.

    First consider the copies of $C_{2k+1}$ containing $x_\ell$.
    There are three possibilities as follows.
    \begin{enumerate}
        \item The cycle containing $u_1,x_{\ell},y_{\ell-1}$.
        \item The cycle containing $u_1,x_{\ell},y_{\ell}$.
        \item The cycle containing $x_{\ell},y_{\ell-1},y_{\ell}$.
    \end{enumerate}

    There is no cycle of the third type.
    Indeed, a cycle of the third type cannot contain $u_1$ and $u_2$, since it is induced.
    Then the neighbor of $y_{\ell}$ in the cycle must be $x_{\ell+1}$ and $x_{\ell}$.
    Repeating this argument for $x_{\ell+1}$, we eventually derive that all vertices in the path $\mathcal{P}$ must be contained in the cycle, a contradiction since $2k+1 \ll \ell$.

    Next consider a cycle of the first type.
    The cycle must contain $u_2$, otherwise, it must contain $x_{\ell-1}$ and then the cycle is not induced, a contradiction.
    Also for the second type, the cycle must contain $u_2$.
    If such a cycle contains both $u_1$ and $u_2$, it cannot contain other vertices in $\mathcal{P}$.

    The cycle containing $x_{\ell}$ must be of the form $u_1,x_{\ell},y_{\ell-1},u_2$~(or $u_1,x_{\ell},y_{\ell},u_2$) and a $(u_1,u_2)$-path of length $(2k-2)$.
    Hence, the number of $C_{2k+1}$ containing $x_\ell$~(or equivalently $y_\ell$) is $2p_{2k-2}(u_1, u_2)$.

    The same argument applies to each vertex in $\mathcal{P}$.
    Thus every cycle containing at least one vertex in $\mathcal{P}$ must contain exactly one edge in $\{y_{\ell-1}x_\ell, x_{\ell}y_{\ell}, \ldots, x_{2\ell-1}y_{2\ell-1}, y_{2\ell-1}x_{2\ell}\}$ and a $(u_1,u_2)$-path of length $2k-2$. 
    Therefore, the number of cycles containing at least one vertex in $\mathcal{P}$ is exactly $(2\ell+1) p_{2k-2}(u_1,u_2) \ge 2\ell {\left(\frac{n - 3\ell}{k}\right)}^{k-1}$.
    It follows that $p_{2k-2}(u_1,u_2) \ge (1-\frac{1}{2\ell+1}) {\left(\frac{n - 3\ell}{k}\right)}^{k-1}$.
\end{proof}

% That is, if $u_1 x_{\ell} y_{\ell} u_2$ is a special path, then there exists a path $x_{1}y_{1} \ldots x_{3\ell} y_{3\ell}$ such that $x_i \in A_1$ and $y_i \in A_2$ for all $1 \le i \le 3\ell$ and every $x_i,y_i$ is of degree three where $\{A_1,A_2\}$ is the $3$-path-decomposition of $(u_1,u_2)$ and $(u_1,u_2)$ is $(2k-2)$-valid.

We now have the necessary definitions and basic lemmas.
The proof of Theorem~\ref{thm: main} proceeds as follows.
\begin{enumerate}
    \item In Section~\ref{sec: three properties}, we first state a sufficient proposition. In proving this proposition, we may assume that $G$ has properties (1), (2) and (3).
    \item As mentioned above, the key is to find vertices of degree two or special paths.
    In Section~\ref{subsec: faces}, we present lemmas that help us analyze regions and thereby find vertices of degree two or special paths.
    \item In Section~\ref{subsec: special path or vertex of degree two}, we prove lemmas that find vertices of degree two or special paths.
    \item In Section~\ref{subsec: prep Delta 2 Delta 3} and~\ref{subsec: large Delta2}, we prove that $\Delta_2$ is large enough~(at least $\frac{1}{k}n - \epsilon' n$).
    \item Once $\Delta_2$ is large enough, we can find a subgraph $G'$ of $G$ on at least $n - k\xi n$ vertices that resembles the extremal graph. Thus only $k\xi n$ vertices remain undetermined, where $\xi$ is a small constant. Finally, in Section~\ref{subsec: Bootstrap}, we show that the remaining $k\xi n$ vertices also fit the extremal structure.
\end{enumerate}

% ------------------------------------
\section{Three properties}\label{sec: three properties}

In this section, we present a proposition, that shows that, in the proof, we may assume that $G$ has properties (1), (2) and (3).

If $G$ does not have property (1), then a natural idea is to delete vertices that violate property (1) and argue on the remaining graph.
If $G$ does not satisfy property (2) or (3), then we can perform operations that do not decrease the number of copies of $C_{2k+1}$ and eventually obtain a graph satisfying property (2) and (3).

Let $G$ be a graph on $n$ vertices forbidding $\mathcal{C}^o_{< 2k+1}$ that does not satisfy property (2).
Then there exist two vertices $v_1$ and $v_2$ of degree two with different neighborhoods such that $v_1$ and $v_2$ are not contained together in any $C_{2k+1}$.
Consider two operations: deleting $v_1$ and blowing up $v_2$, or deleting $v_2$ and blowing up $v_1$.
These operations preserve planarity, since the vertices involved have degree two.
If one of the operations strictly increases the number of $C_{2k+1}$, then we obtain a graph $G'$ with more $C_{2k+1}$ than $G$.
Otherwise, $v_1$ and $v_2$ are contained in the same number of $C_{2k+1}$.
In this case, we call such $v_1$ and $v_2$ a \textbf{symmetric pair}.
If there is a vertex of degree two adjacent to $v_1$ (~or equivalently $v_2$), then after deleting $v_1$ and blowing up $v_2$, we have a graph $G'$ with $f_{k}(G') \ge f_k(G)$ containing a degree-one vertex.
If a graph contains a degree-one vertex, then it cannot have property (1).
Otherwise, if no vertex of degree two is adjacent to $v_1$ or $v_2$, then deleting one of $v_1,v_2$ and blowing up the other does not decrease the number of vertices of degree two.
Assume we have an ordering of the vertices in $G$, $\phi: V(G) \rightarrow [n]$.
We choose $v_1,v_2$ to be a symmetric pair for which $\max\{\phi(v_1),\phi(v_2)\}$ is maximized.
Then we always delete the vertex with smaller index, say $v_1$, and blow up the other vertex $v_2$~(the new vertex has index $\phi(v_1)$).
Define
\[
    q(G) = \max \{ \phi(v) \mid v \in V(G),  \text{$v$ is in a symmetric pair}\}.
\]
After one such operation, either $q(G)$ decreases, or the number of symmetric pairs containing $\phi^{-1}(q(G))$ decreases.

% Now we define Procedure~\ref{alg: procedure 1} to transform $G$ to a graph with property (2).

% \begin{algorithm}[ht]
% \caption{Procedure to obtain property (2)}
% \label{alg: procedure 1}
% \begin{algorithmic}[1]
%     \State Start with $G_0 = G$.
%     \While {If $G_i$ does not have property (2)}
%         \State There exists two vertices of degree two $v_1$ and $v_2$ with different neighbor set, and $v_1,v_2$ are not contained together in any $C_{2k+1}$.
%         \State If deleting one vertex and blowing up the other vertex strictly increases the number of $C_{2k+1}$, then do the operation to obtain a graph $G_{i+1}$.
%         \State Otherwise, let $v_1,v_2$ be a symmetric pair such that $\max\{v_1,v_2\} = q(G_i)$.
%         Then delete the vertex with smaller index and blow up the other vertex to obtain a graph $G_{i+1}$.
%     \EndWhile
% \end{algorithmic}
% \end{algorithm}

We now define a procedure that transforms $G$ either into a graph with property (2) or into a graph without property (1).
We start with $G_0 = G$.

\begin{enumerate}[label=Step \arabic*:]
    \item If $G_i$ has property (2) or does not have property (1), then stop. Otherwise, there exist two vertices $v_1$ and $v_2$ of degree two with different neighborhoods, and $v_1,v_2$ are not contained together in any $C_{2k+1}$.
    \item If $v_1$ and $v_2$ do not form a symmetric pair, then deleting one vertex and blowing up the other strictly increases the number of copies of $C_{2k+1}$. Perform this operation to obtain a graph $G_{i+1}$ with more copies of $C_{2k+1}$, and return to Step 1.
    \item Otherwise, let $v_1,v_2$ be a symmetric pair such that $\max\{\phi(v_1),\phi(v_2)\} = q(G_i)$. If there is a vertex of degree two adjacent to $v_1$ (or equivalently $v_2$), then we can delete $v_1$ and blow up $v_2$ to obtain a graph $G_{i+1}$ with $f_{k}(G_{i+1}) \ge f_{k}(G_i)$ containing a degree-one vertex.
    Clearly, $G_{i+1}$ does not have property (1).
    Then return to Step 1.
    \item Otherwise, delete the vertex with smaller index~(say $v_1$) and blow up the other vertex to obtain a graph $G_{i+1}$~(the new vertex has index $\phi(v_1)$).
    In this case, the number of vertices of degree two does not decrease.
    Then return to Step 1.
\end{enumerate}

Throughout the process, the graph remains planar and $\mathcal{C}^o_{<2k+1}$-free.
In Step 2, the number of copies of $C_{2k+1}$ strictly increases.
In Step 3, we obtain a graph without property (1), and then stop.
In Step 4, either $q(G_{i+1}) < q(G_i)$, or the number of symmetric pairs containing $\phi^{-1}(q(G_i))$ strictly decreases.
Therefore, the procedure eventually stops at some graph $G_m$.
The following lemma follows directly from this procedure.

\begin{lemma}\label{lem: procedure 1}
    Let $G$ be a planar graph on $n$ vertices forbidding $\mathcal{C}^o_{<2k+1}$ that does not satisfy property (2).
    Then there exists a planar graph $G'$ on $n$ vertices forbidding $\mathcal{C}^o_{<2k+1}$ with $f_{k}(G') \ge f_{k}(G)$.
    Moreover, $G'$ has the following property.
    % property (2) whose number of vertices of degree two does not decrease, or has a degree-one vertex and thus does not satisfy property (1).
    \begin{enumerate}
        \item Either $G'$ does not have property (1), $f_k(G') \ge f_k(G)$;
        \item or $G'$ has property (2), $f_k(G') > f_k(G)$;
        \item or $G'$ has property (2), $f_k(G') = f_k(G)$, and the number of vertices of degree two in $G'$ is at least that in $G$.
    \end{enumerate}
\end{lemma}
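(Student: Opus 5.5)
The plan is to run the procedure just described on $G_0 = G$ and take $G' = G_m$, the graph at which it halts. The proof has three parts: check the invariants of each step, prove termination, and read off which of the three alternatives holds at the stopping graph.

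\textbf{Invariants.} Each step either deletes a degree-two vertex $v_1$ and replaces it by a copy of $v_2$, i.e.\ a new vertex with $N(v_2)$ as its neighborhood. Inserting a vertex of degree two next to $v_2$ in the embedding keeps the graph planar, and the order stays $n$. The new vertex is a twin of $v_2$, so any odd cycle through it can be rerouted through $v_2$; if it uses both twins, the new vertex and $v_2$ split it into two paths between the twins' common neighbors, and one of them closes up with a shorter odd closed walk. Hence $\mathcal{C}^o_{<2k+1}$-freeness is preserved.

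For the counts, let $c(v)$ be the number of copies of $C_{2k+1}$ through $v$. Since $v_1$ and $v_2$ lie in no common $C_{2k+1}$, deleting $v_1$ removes exactly $c(v_1)$ copies. The clone of $v_2$ then lies in exactly $c(v_2)$ new copies: any cycle through both $v_2$ and its clone has length $4$, and cycles through the clone do not use $v_1$. The net change is $c(v_2)-c(v_1)$. Choosing the better of the two orientations gives a strict gain unless $c(v_1)=c(v_2)$, which is exactly the symmetric-pair case. So $f_k$ is nondecreasing throughout, and strictly increasing at Step 2.

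\textbf{Termination.} This is the main obstacle. Step 2 strictly increases $f_k$, which is bounded by the finite number $h_k(n)$ or trivially by $n^{2k+1}$, so Step 2 can occur only finitely often. Step 3 produces a graph with a degree-one vertex, so the next Step 1 halts. Between two consecutive Step-2 moves only Step 4 occurs, and there I would use the lexicographic potential $\bigl(q(G_i),\ \#\{\text{symmetric pairs containing } \phi^{-1}(q(G_i))\}\bigr)$. In Step 4 the higher-indexed vertex $v_2$ survives, and the clone takes the smaller index $\phi(v_1)$.

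The delicate point is that the clone must not create a new symmetric pair with index above $q$, or a new pair containing $\phi^{-1}(q)$. The clone is a twin of $v_2$, so it never forms a pair with $v_2$ itself, since pairs need different neighborhoods. Any pair it forms has maximum index at least that of its partner, but the pair $(\text{clone}, v_2)$ is excluded. I would need to argue that the pair $\{v_1,v_2\}$ itself is destroyed and that no fresh pair involving $v_2$ appears. I would try to show this by noting that the pairs containing $v_2$ after the move are exactly those containing $v_2$ before it, minus $\{v_1,v_2\}$. I also need that the twin relation does not alter which degree-two vertices share a $C_{2k+1}$ with $v_2$; this holds because replacing $v_1$ by a twin of $v_2$ changes cycle counts only locally.

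If this monotonicity is shaky, the fallback is to accept the paper's assertion that the potential strictly decreases in Step 4 and combine it with the bounded number of Step-2 increases.

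\textbf{Conclusion.} At the stopping graph $G_m$, Step 1 halts, so $G_m$ either lacks property (1) or has property (2). In the first case we are in alternative (1), since $f_k$ never decreased. If $G_m$ has property (2) and some Step 2 occurred, then $f_k(G_m) > f_k(G)$, which is alternative (2). Otherwise only Step 4 moves occurred; Step 3 cannot have occurred, because it would have destroyed property (1) and we would be in the first case. Step 4 preserves $f_k$, as the pair is symmetric. It also does not decrease the number of degree-two vertices: $v_1$ is replaced by a degree-two clone, and since no degree-two vertex is adjacent to $v_1$ or $v_2$, no neighbor's degree drops to two or changes from two. This gives alternative (3).
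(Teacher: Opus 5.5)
Your route is the paper's: run the procedure, check that each move preserves planarity and $\mathcal{C}^o_{<2k+1}$-freeness and changes $f_k$ by $c(v_2)-c(v_1)$, then read off the three alternatives at the halting graph. Those parts are correct. The gap is termination in Step~4. You flag it as the crux but do not prove it, and the justification you sketch is false.

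Write $c(v,v_i)$ for the number of copies of $C_{2k+1}$ through both $v$ and $v_i$. After the move, every vertex $v$ other than $v_2$ and the clone has count $c(v)-c(v,v_1)+c(v,v_2)$. This is a global change, not a local one. Also, two degree-two vertices whose common $C_{2k+1}$'s all pass through $v_1$ stop sharing a cycle. So a pair not involving $v_2$ can become symmetric, possibly with an index above $q$.

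Moreover, contrary to your closing remark, a degree-three neighbor of $v_1$ outside $N(v_2)$ drops to degree two. This new degree-two vertex may form a symmetric pair with $v_2$, which increases the number of pairs at $\phi^{-1}(q)$. It may instead pair with a vertex of index above $q$, which increases $q$.

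So your claim that the pairs containing $v_2$ afterwards are exactly those before minus $\{v_1,v_2\}$ fails. Falling back on the paper does not close the gap. The paper asserts, without further argument, that $q$ or the number of pairs at $\phi^{-1}(q)$ drops, and it addresses neither effect.

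One way to repair this is to replace the index potential. Let $D_2$ be the number of degree-two vertices, and for each vertex set $N$ let $m_N$ be the number of degree-two vertices with neighborhood $N$. Use the lexicographic potential $\bigl(f_k, D_2, \sum_N m_N^2\bigr)$.

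For a symmetric pair, both orientations keep $f_k$. So in Step~4, orient the move so that $m_{N(v_2)}\ge m_{N(v_1)}$.
\begin{itemize}
\item No degree-two vertex is adjacent to $v_1$ or $v_2$, so no vertex other than $v_1$ leaves degree two. Hence $D_2$ does not decrease.
\item If $D_2$ stays the same, no vertex other than the clone enters degree two, and all other degree-two vertices keep their neighborhoods.
\item In that case the only change is that one vertex leaves class $N(v_1)$ and one joins class $N(v_2)$. So $\sum_N m_N^2$ rises by $2(m_{N(v_2)}-m_{N(v_1)})+2\ge 2$.
\end{itemize}

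Step~2 raises $f_k$, Step~3 forces a halt at the next check, and the potential takes finitely many values. Hence the procedure terminates. Your derivation of alternatives (1)--(3) then goes through unchanged, since $D_2$ never drops in Step~4.
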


    If $G$ has properties (1) and (2) but not property (3), then there exist two vertices $v_1$ and $v_2$ of degree two with the same neighbors, and a $(2k-2)$-valid pair $(u_1,u_2)$ such that there is a special path $u_1 x_{\ell} y_{\ell} u_2$, but $v_1$ and $u_1 x_{\ell} y_{\ell} u_2$ are not contained in any common $C_{2k+1}$.

By Lemma~\ref{lem: with long path}, the number of $C_{2k+1}$ containing $x_{\ell}$~(or $y_\ell$) is exactly $2 p_{2k-2}(u_1,u_2)$.
Assume the two neighbors of $v_1$ and $v_2$ are $w_1$ and $w_2$~(see Figure~\ref{fig: operation for property 3}).
If $p_{2k-1}(w_1,w_2) < p_{2k-2}(u_1,u_2)$, then we obtain a new graph $G'$ by deleting $v_1,v_2$, deleting the edge $y_{\ell}x_{\ell+1}$, and adding two new vertices $x$ and $y$.
Then add the edges $y_{\ell}x, xy, yx_{\ell+1},u_1x, yu_2$.
The number of copies of $C_{2k+1}$ in $G'$ is the number in $G$ minus $2 p_{2k-1}(w_1,w_2)$ plus $2 p_{2k-2}(u_1,u_2)$, and hence is larger.

If $p_{2k-1}(w_1,w_2) \ge p_{2k-2}(u_1,u_2)$, then we obtain a new graph $G''$ by deleting $x_{\ell},y_{\ell}$, adding the edge $y_{\ell-1}x_{\ell+1}$, and adding two new copies of $v_1$.
The number of copies of $C_{2k+1}$ in $G''$ is the number in $G$ minus $2 p_{2k-2}(u_1,u_2)$ plus $2 p_{2k-1}(w_1,w_2)$, which is larger when $p_{2k-1}(w_1,w_2) > p_{2k-2}(u_1,u_2)$~(see Figure~\ref{fig: operation for property 3}).
Since $G$ has property (1), $w_1,w_2$ must not be vertices of degree two.
Thus, when $p_{2k-1}(w_1,w_2) = p_{2k-2}(u_1,u_2)$, the number of copies of $C_{2k+1}$ remains the same, but the number of vertices of degree two strictly increases.

\begin{figure}
    \centering
    \includegraphics[width=\textwidth]{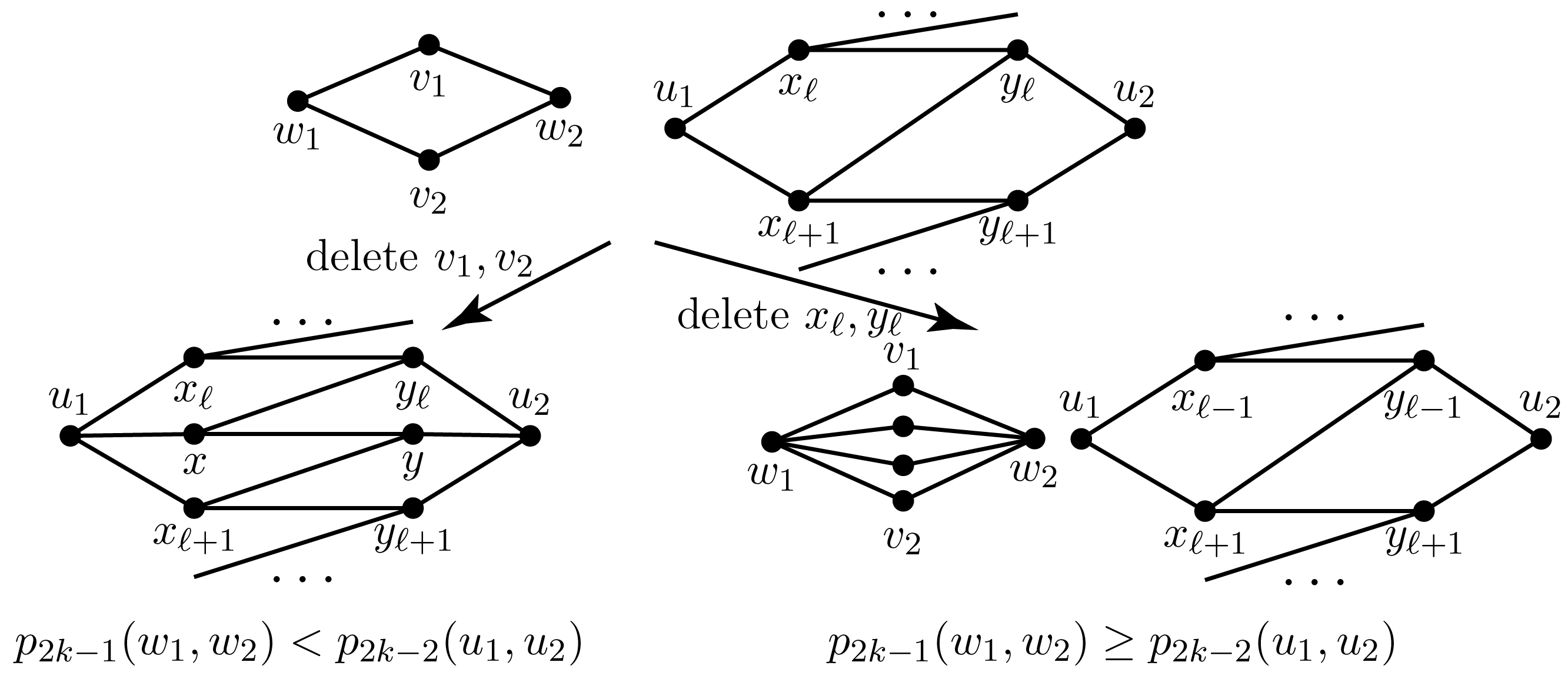}
    \caption{The operations when $G$ does not have property (3).}\label{fig: operation for property 3}
\end{figure}

Throughout the process, the graph remains planar and $\mathcal{C}^o_{<2k+1}$-free.
In both cases, we obtain either a graph with more copies of $C_{2k+1}$, or a graph with the same number of copies but more vertices of degree two.
We can continue this procedure until we obtain a graph with property (3), provided that the graph maintains properties (1) and (2) throughout the process, as stated in the following lemma.

\begin{lemma}\label{lem: procedure 2}
    Let $G$ be a planar graph on $n$ vertices forbidding $\mathcal{C}^o_{<2k+1}$ with property (1) and (2) but not property (3).
    Then there exists a planar graph $G'$ on $n$ vertices forbidding $\mathcal{C}^o_{<2k+1}$ with $f_{k}(G') \ge f_{k}(G)$ and one of the following properties.
    \begin{enumerate}
        \item either $f_{k}(G') > f_{k}(G)$ and $G'$ does not satisfy property (1) or (2);
        \item or $f_{k}(G') = f_{k}(G)$, $G'$ has more vertices of degree two than $G$, and $G'$ does not satisfy property (1) or (2);
        \item or $G'$ has property (1), (2) and (3).
    \end{enumerate}
\end{lemma}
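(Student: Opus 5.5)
The argument is an iteration of the single operation described just before the statement. Each application either strictly increases $f_k$, or keeps $f_k$ fixed and strictly increases the number of vertices of degree two. We stop the first time the current graph fails property (1) or (2), or satisfies all three properties. Termination is immediate: $f_k$ is bounded by $\binom{n}{2k+1}$, and the number of degree-two vertices is bounded by $n$, so the pair $(f_k,\#\{\text{degree-two vertices}\})$ increases lexicographically in a bounded set.

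\textbf{Step 1 (one move).} Let $G_i$ have properties (1) and (2) but not (3). Take $v_1,v_2$ of degree two with common neighbours $w_1,w_2$, and a special path $u_1x_\ell y_\ell u_2$ that shares no $C_{2k+1}$ with $v_1$.

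If $p_{2k-1}(w_1,w_2)<p_{2k-2}(u_1,u_2)$, form $G'$ from the construction in the text: delete $v_1,v_2$ and reroute $y_\ell x_{\ell+1}$ through new vertices $x,y$ joined to $u_1,u_2$. Otherwise form $G''$: delete $x_\ell,y_\ell$, add the edge $y_{\ell-1}x_{\ell+1}$, and add two new copies of $v_1$.

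In both cases three things must be checked.

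\emph{Planarity.} The new vertices sit in faces already containing the relevant edges. For $G'$, $x,y$ lie in the face bounded by $u_1x_\ell y_\ell u_2$ and its neighbours on the special path. For $G''$, the twin copies of $v_1$ are drawn next to $v_1$.

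\emph{$\mathcal{C}^o_{<2k+1}$-freeness.} Any new closed walk through the new vertices maps homomorphically onto a closed walk of the same length in $G_i$. For $G'$ the map is $x\mapsto x_\ell$, $y\mapsto y_\ell$, and for $G''$ new twins map to $v_1$. For the added edge $y_{\ell-1}x_{\ell+1}$, use the path $y_{\ell-1}x_\ell y_\ell x_{\ell+1}$: this changes length by $2$ and preserves parity. It also only lengthens walks, which is the wrong direction, so short odd cycles through $y_{\ell-1}x_{\ell+1}$ need separate care. Here I would use that $y_{\ell-1},x_{\ell+1}$ lie in $A_2,A_1$ respectively, so they are joined by the walk $y_{\ell-1}u_2\,[\text{path of length }2k-2]\,u_1x_{\ell+1}$ of length $2k$. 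An odd cycle of length less than $2k+1$ through the new edge would then combine with a $(u_1,u_2)$-path into a short odd circuit in $G_i$, via Lemma~\ref{lem: Ai Aj disjoint}.

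\emph{Counting.} In $G'$, $v_1,v_2$ each lay on exactly $p_{2k-1}(w_1,w_2)$ cycles; since they share no neighbourhood difference, no cycle contains both. By the same analysis as Lemma~\ref{lem: with long path}, each of the new vertices $x,y$ creates $p_{2k-2}(u_1,u_2)$ new cycles through the edges $u_1x$ and $yu_2$. Because $v_1$ shares no cycle with the special path, the deleted and the created cycles do not interfere. The case $G''$ is symmetric, and it uses the exact count $2p_{2k-2}(u_1,u_2)$ from Lemma~\ref{lem: with long path}. In the equality case, $w_1,w_2$ have degree at least $3$ by property (1), so the number of degree-two vertices strictly increases.

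\textbf{Step 2 (termination and the trichotomy).} Iterate Step 1 and stop at the first graph $G'$ that fails (1) or (2), or satisfies (1), (2), (3). If we stop in the third case, alternative 3 holds.

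If we stop because (1) or (2) fails, consider the history. Either some move strictly increased $f_k$, and since $f_k$ never decreases, $f_k(G')>f_k(G)$, which is alternative 1. Or every move was an equality move, in which case $f_k(G')=f_k(G)$ and the number of degree-two vertices strictly increased, which is alternative 2. The procedure is nonempty because $G$ itself fails (3), so at least one move occurs.

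\textbf{Main obstacle.} The delicate part is the second bullet of Step 1 for $G''$: the added edge $y_{\ell-1}x_{\ell+1}$ might create a short odd cycle, or extra $C_{2k+1}$'s that spoil the exact count. I would first try to rule both out using the length-$2k$ walk through $u_1,u_2$ together with Lemmas~\ref{lem: Ai Aj disjoint} and~\ref{lem: t-decomp and 2k+1-t-decomp}. Failing that, I would fall back on the induced-cycle argument from Lemma~\ref{lem: with long path}, using that all vertices of the long path have degree three.
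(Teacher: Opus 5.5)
Your proposal follows the paper's own argument: the same two operations ($G'$ when $p_{2k-1}(w_1,w_2)<p_{2k-2}(u_1,u_2)$, otherwise $G''$), the same net change of $\pm 2\bigl(p_{2k-2}(u_1,u_2)-p_{2k-1}(w_1,w_2)\bigr)$ in $f_k$, the same use of property (1) to get strictly more degree-two vertices in the equality case, and the same iteration, stopping when (1) or (2) fails or (3) holds. The paper only asserts that $G''$ stays $\mathcal{C}^o_{<2k+1}$-free, and your first idea for proving this fails on parity. The rest of a short odd cycle through $y_{\ell-1}x_{\ell+1}$ is an even path, so adding your even length-$2k$ walk gives an even circuit. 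Your fallback does work: replacing the new edge by $y_{\ell-1}x_\ell y_\ell x_{\ell+1}$ already rules out lengths $\le 2k-3$. For length $2k-1$ it yields a $C_{2k+1}$ in $G$ through $y_{\ell-1},x_\ell,y_\ell$, which is exactly the third type excluded in Lemma~\ref{lem: with long path}.
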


We are now ready to state the main theorem.

\begin{theorem}\label{thm: new main}
    There exists $n_0 = n_0(k)$ such that every planar graph $G$ on $n \ge n_0$ vertices forbidding $\mathcal{C}^o_{<2k+1}$ and satisfying properties (1), (2) and (3) contains at most $h_k(n)$ copies of $C_{2k+1}$.
\end{theorem}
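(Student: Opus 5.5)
We argue by contradiction: assume $G$ is a planar $\mathcal{C}^o_{<2k+1}$-free graph on $n\ge n_0$ vertices with properties (1), (2), (3) and $f_k(G)>h_k(n)$. We first show that $G$ contains either a vertex of degree two or a special path. For this we use planarity: $G$ is triangle-free, so $|E(G)|\le 2n-4$. Since $f_k(G)=\Theta(n^k)$, there are pairs $(u,w)$ with many paths between them. Lemma~\ref{lem: size lower bound of A} shows that such path systems must be spread over linearly many vertices. The face structure around the forests between consecutive $A_i$ (Observation~\ref{obs: forest between Ai and Ai+1}) should then force either a pair with $p_2(u,w)\ge 800k^k$, whose common neighbourhood contains a degree-two vertex, or a long degree-three zigzag between $A_1$ and $A_2$ for some $(2k-2)$-valid pair, i.e.\ a special path. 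In either case property (1), via Lemma~\ref{lem: with long path}, gives a pair $(u_1,u_2)$ with
\[
p_{2k-1}(u_1,u_2)\ \text{or}\ p_{2k-2}(u_1,u_2)\ \ge (1-\epsilon)\left(\frac{n-3\ell}{k}\right)^{k-1}.
\]

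Second, we bootstrap this into a lower bound on $\Delta_2$. We take the path-decomposition $\{B_1,\dots,B_{2k-2}\}$ (or $\{B_1,\dots,B_{2k-3}\}$) of $(u_1,u_2)$. By Lemma~\ref{lem: size lower bound of A} and AM--GM applied to the forests $E_i$, a count of order $(n/k)^{k-1}$ forces the $k-1$ "layers" $E_i$ to have sizes about $n/k$ each, with total $|B|\le n$. A layer is a forest between $B_{2i-1}$ and $B_{2i}$, so Lemma~\ref{lem: bi tree} bounds its weighted edge count by $w(X)\Delta_Y+w(Y)\Delta_X-\Delta_X\Delta_Y$. Near-extremality then forces each layer to be star-like: one vertex of huge weight adjacent to almost all of the other side. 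This yields pairs of vertices with about $n/k-\epsilon' n$ common neighbours, so $\Delta_2\ge n/k-\epsilon' n$. Iterating this layer by layer should produce $k-1$ near-blown-up vertices $v_1,\dots,v_{k-1}$ between hubs $u_1,\dots,u_k$, as in the construction of Theorem~\ref{thm: main}. Together with the $(u_1,u_k)$ connection, which by Lemmas~\ref{lem: Ai Aj disjoint} and~\ref{lem: t-decomp and 2k+1-t-decomp} and parity must be bipartite and planar, this gives a subgraph $G'$ on at least $n-k\xi n$ vertices that has the extremal shape.

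Finally, we handle the remaining at most $k\xi n$ vertices. Each such vertex $v$ lies in at least $h_k(n)-h_k(n-1)\approx (n/k)^{k-1}$ copies of $C_{2k+1}$ by property (1). Using planarity, each copy through $v$ that is not of the extremal type must avoid most of the large classes, so these copies number only $O(n^{k-2})$. Hence $v$ must attach to the structure either as a blown-up vertex of some class $V_i$ or as part of the tree between $u_1$ and $u_k$. Properties (2) and (3) rule out the competing configurations: two degree-two vertices with different neighbourhoods must lie on a common cycle, and special paths must combine with twin degree-two vertices. So $G$ is a subgraph of a graph of the extremal form. Counting then gives at most $x_1\cdots x_k$ copies of $C_{2k+1}$, where $x_k$ is the number of edges of the forest joining $u_1$ and $u_k$; Lemma~\ref{lem: bi tree} controls the contributions of any non-star parts. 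The total is at most $h_k(n)$, a contradiction.

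The main obstacle is the first two steps, namely extracting a degree-two vertex or a special path and then forcing $\Delta_2$ to be large. Both require a delicate case analysis of faces and empty regions in the planar embedding. I would first try to bound the number of vertices of degree at least four by charging the $\Theta(n^k)$ cycles to them through Lemma~\ref{lem: size lower bound of A}, and to locate long chains of degree-three vertices inside the forests of Observation~\ref{obs: forest between Ai and Ai+1}.
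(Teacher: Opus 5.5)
Your second step, where the theorem is really decided, does not work as written. From a degree-two vertex with neighbours $u_1,u_2$ you get $p_{2k-1}(u_1,u_2)\ge((n-2k-2)/k)^{k-1}$. However, the only budget you have on the layers is $\sum_{i=1}^{k-1}|E_i|\le|B|\le n$, and under it $\prod_i|E_i|$ can be as large as $(n/(k-1))^{k-1}$. A count of order $(n/k)^{k-1}$ is therefore a constant factor below the maximum, and your counting forces nothing. For $k=3$, layers of sizes $n/6$ and $5n/6$ are compatible with it. Lemma~\ref{lem: bi tree} yields a star only when $w(H)$ is close to $w(X)w(Y)$, which you do not have.

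The rigidity you describe is what the paper uses in Lemmas~\ref{lem: bootstrap 1} and~\ref{lem: bootstrap 2}. There it rests on the hypothesis $|A|\le\frac{k-t}{k}n+\epsilon' n$, which is available only \emph{after} $\Delta_2\ge n/k-\epsilon' n$ is known, because the common neighbourhood of $u_1,u_2$ is disjoint from $A$ by Lemma~\ref{lem: t-decomp and 2k+1-t-decomp}. Taking $p_2(u_1,u_2)=\Delta_2$ and $|A|\le n-\Delta_2$ only gives $\Delta_2\lesssim n/k$, which is the wrong direction. So your derivation of a large $\Delta_2$ is circular.

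The missing ingredient is an \emph{upper} bound on long path counts in terms of $\Delta_2$ and $\Delta_3$. This is Lemma~\ref{lem: other Delta}: $\Delta_m\le\Delta_2\Delta_{m-2}+4\epsilon n^{\lfloor m/2\rfloor}$, hence $\Delta_{2k-3}\lesssim\Delta_2^{k-3}\Delta_3$. This is exactly where properties (2) and (3) are needed. Take a pair attaining $\Delta_m$, with $x_0\in A_2$ satisfying $p_2(u_1,x_0)=\Omega(n)$. A second $x\in A_2$ with both $p_2(u_1,x)$ and $p_{m-2}(x,u_2)$ large would produce degree-two vertices or special paths. By (2)/(3) these must share a $C_{2k+1}$ with a degree-two vertex at $x_0$, and Claim~\ref{claim: unite two paths in a common cycle} turns this into a short odd circuit.

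With that lemma, the paper bounds $w(H)$ for one forest layer once from each side and multiplies, getting $w(H)^2\lesssim\Delta_2^{k-2}\cdot\Delta_3\prod_i e_i$. It then applies AM--GM under the budget $\Delta_2+\sum_i e_i\le n$ (resp.\ $\Delta_3+|A|\le n$). The case $\Delta_3>\Delta_2+2\epsilon n$ is handled by Lemmas~\ref{lem: large star of long path} and~\ref{lem: with long path}. In your plan, (2) and (3) appear only in the final clean-up, where the paper needs only property (1) (Proposition~\ref{prop: finally}).

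Two smaller points. Your first step is left at ``should force''. In the paper, a degree-two vertex follows directly from a minimum-degree vertex, property (1), Lemmas~\ref{lem: order of deltas} and~\ref{lem: order of deltas 2} (giving $\Delta_2=\Omega(n)$), and Corollary~\ref{cor: large common neighbor}. In the last step, you have not justified that $G$ is a subgraph of the extremal form. Leftover vertices may lie on length-three paths such as $(u_2,a_1^{\mathrm{int}})$-paths avoiding $u_1$ (the sets $P,Q,S,T$). The paper does not exclude these but handles them by the optimization in (\ref{eq: finally}).
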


We claim that Theorem~\ref{thm: new main} is sufficient to prove Theorem~\ref{thm: main}.

\noindent
\textbf{Proof of Theorem~\ref{thm: main} assuming Theorem~\ref{thm: new main}.}
    Define $n_0' = n_0 + 2 \ell n_0^{2k+1}$, where $n_0$ is the constant in Theorem~\ref{thm: new main}.
    Let $G$ be a planar graph on $n \ge n_0'$ vertices forbidding $\mathcal{C}^o_{<2k+1}$ and containing the maximum possible number of copies of $C_{2k+1}$.

    We define a procedure starting from $G_n = G$.
    Here the subscript always denotes the number of vertices in the graph.
    % Here we use the subscript to denote the number of vertices in the graph.

    \begin{enumerate}[label=Step \arabic*:]
        \item If $G_i$ has property (1), (2) and (3) or $G_i$ has at most $n_0$ vertices, then stop.
        \item If $G_i$ does not have property (1).
        If there exists a vertex $v$ that violates property (1), then delete $v$ to obtain $G_{i-1}$; in this case, $f_k(G_i) \le f_k(G_{i-1}) + h_k(i) - h_k(i-1) -1$.
        If there exist $2\ell$ vertices $v_1,\ldots,v_{2\ell}$ that violate property (1), then delete $v_1,\ldots,v_{2\ell}$ to obtain $G_{i-2\ell}$; in this case, $f_k(G_i) \le f_k(G_{i-2\ell}) + h_k(i) - h_k(i-2\ell) -1$.
        Then return to Step 1.
        \item If $G_i$ has property (1) but not property (2), then update $G_i$ by the graph obtained from Lemma~\ref{lem: procedure 1}.
        Then return to Step 1.
        % \item If $G_i$ have property (1) but not property (2), then by the previous analysis, we can do some operations to update $G_{i}$ such that the result graph has property (2) without decreasing the number of $C_{2k+1}$.
        \item If $G_i$ has property (1) and (2) but not property (3), then update $G_i$ by the graph obtained from Lemma~\ref{lem: procedure 2}.
        Then return to Step 1.
        % \item If $G_i$ have property (1) and (2) but not property (3), then by the previous analysis, we can do some operations to update $G_i$ either increasing the number of $C_{2k+1}$ or increasing the number of vertices of degree two.
        % Then go back to Step 1.
    \end{enumerate}

    The number of vertices decreases only in Step 2, because of a violation of property (1).
    In Step 3, the updated $G_i$ either has property (1) and (2), or does not satisfy property (1).
    % In the latter case, the number of vertices will decrease due to Step 2.
    In Step 4, either $f_k(G_i)$ strictly increases, or the number of vertices of degree two strictly increases, or $G_i$ has property (1), (2) and (3).
    Hence the procedure eventually stops.
    Suppose the procedure passes through $G_{n_1}, G_{n_2}, \ldots, G_{n_m}$, where $n = n_1 > n_2 > \cdots > n_m$, and stops at $G_{n_m}$.

    If $G_{n_m}$ has property (1), (2) and (3) and at least $n_0$ vertices, then by Theorem~\ref{thm: new main} we have
    \begin{equation*}
    \begin{aligned}
        f_k(G) & \le \sum_{i=1}^{m-1} \left( h_k(n_i) - h_k(n_{i+1}) \right) + f_k(G_{n_m})\\
        & \le h_k(n_1).
    \end{aligned}
    \end{equation*}
    If $G_{n_m}$ has at most $n_0$ vertices, then
    \begin{equation*}
    \begin{aligned}
        f_k(G) & \le \sum_{i=1}^{m-1} \left( h_k(n_i) - h_k(n_{i+1}) -1 \right) + f_k(G_{n_m})\\
        & \le h_k(n_1) - \frac{n-n_0}{2\ell} + f_k(G_{n_m})\\
        & \le h_k(n_1) - n_0^{2k+1} + n_0^{2k+1} \le h_k(n).
    \end{aligned}
    \end{equation*}
    \hfill$\square$\par

    The rest of the paper is devoted to the proof of Theorem~\ref{thm: new main}.

% -------------------------------------------------------------
\section{Proof of Theorem~\ref{thm: new main}}\label{sec: main proof}

Since we work with planar graphs, we fix a plane embedding of the graph throughout the proof.

% -------------------------------------------------------------
\subsection{Find empty regions and vertices of degree two}\label{subsec: faces}

We say that a region is \textbf{empty} if it contains no vertex in its interior.

\begin{lemma}\label{lem: 4-cycle empty}
    Let $G$ be a planar graph on $n$ vertices forbidding $\mathcal{C}^o_{<2k+1}$ with property (1).
    Let $(u_1,u_2)$ be a $(2k-1)$-valid pair, and $v_1,v_2$ be two consecutive distinct vertices in $N(u_1) \cap N(u_2)$~(see Figure~\ref{fig: 4- and 6-cycle empty}).
    Denote by $R$ the region with boundary $u_1 v_1 u_2 v_2 u_1$.
    That is, no vertex from $N(u_1) \cap N(u_2)$ lies inside $R$.
    Then there exists a constant $n_1 = n_1(k)$ such that, if $n \ge n_1$ and $R$ is not empty, then $R$ contains at least $\frac{1}{400k^{k}} n$ vertices.
\end{lemma}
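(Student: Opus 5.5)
Let $S$ be the set of vertices strictly inside $R$, assume $S\neq\emptyset$, and suppose for contradiction that $|S|< \frac{1}{400k^k}n$. The plan is to pick one vertex $v\in S$, apply property (1) to it, and show that a cycle through an interior vertex must cross the boundary in a very restricted way, so that $v$ lies on too few copies of $C_{2k+1}$.

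\textbf{Step 1: how a cycle enters $R$.} Take any copy $C$ of $C_{2k+1}$ containing $v$. By planarity, $C$ enters and leaves $S$ only through the boundary vertices $u_1,v_1,u_2,v_2$. Since $(u_1,u_2)$ is $(2k-1)$-valid, every $(u_1,u_2)$-path of length $2$ through $S$ would make that vertex a common neighbour inside $R$. This is excluded by the choice of $v_1,v_2$ as consecutive common neighbours. Applying Lemma~\ref{lem: Ai Aj disjoint} and Lemma~\ref{lem: t-decomp and 2k+1-t-decomp} with the $(2k-1)$-path from $u_1$ to $u_2$, the parities are forced: $u_1,u_2$ lie in one colour class and $v_1,v_2$ in the other, and any path inside $R$ between two boundary vertices has length of the right parity.

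\textbf{Step 2: bound the cycles through $v$.} Decompose $C$ into a segment $P$ inside $R\cup\partial R$ through $v$, with endpoints $a,b$ on the boundary, and an outside segment. Write $|S|=s$.
\begin{itemize}
\item The number of choices of $P$ is at most $O_k(s^{k})$. A crude count suffices, since $P$ has bounded length and, by Lemma~\ref{lem: size lower bound of A}, the number of paths of length $t$ between two fixed vertices using only vertices of $S$ is at most $s^{\lceil (t-1)/2\rceil}$.
\item Given $P$, the outside segment is an $(a,b)$-path of complementary length. I will bound the number of such segments by the corresponding count already realised through $v_1$ or $v_2$.
\end{itemize}
The key comparison uses the cyclic structure around the boundary. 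If $P$ has length $m$ between $u_1$ and $u_2$, the outside part is a $(u_1,u_2)$-path of length $2k+1-m$. Replacing $P$ by $u_1v_1u_2$, when lengths are compatible, shows these outside paths correspond to cycles through $v_1$, with the lengths adjusted. The more robust route is a direct bound. The number of cycles through $v$ is at most (choices of $P$)$\cdot\max$ over outside paths. Each outside path count is at most $O(n^{k-1})$ by Lemma~\ref{lem: size lower bound of A}, with a length-saving factor. For every endpoint pair, the total length forces either $P$ to use about $k$ interior vertices of degree of freedom, which costs about $s^{a}$, or the outside part to have length at most $2k-1-2a$, which costs at most $n^{k-1-a}$.

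\textbf{Step 3: conclude.} Combining these, the cycles through $v$ number at most
\[
\sum_a C_k\, s^{a} n^{k-1-a} \le C_k' \frac{n^{k-1}}{400k^k} \quad\text{for } s<\tfrac{n}{400k^k}.
\]
This is less than $h_k(n)-h_k(n-1)\ge ((n-2k-2)/k)^{k-1}$ for $n\ge n_1$, which contradicts property (1).

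\textbf{Main obstacle.} The hard part is the exact accounting in the $a=0$ term, where $P$ uses only $v$ plus $O(1)$ vertices and the outside contributes the full $n^{k-1}$. There I must show that $v$ cannot sit on a path of length $2$ between $u_1$ and $u_2$, because $v$ is not a common neighbour. Hence reaching the full order $n^{k-1}$ would require the outside path to have length $2k-1$ between $u_1,u_2$ together with an inside path of length $2$, which is impossible. A path between $u_1$ and $v_i$, or between $v_1$ and $v_2$, with outside length $2k-1$ or more is also constrained by Lemma~\ref{lem: Ai Aj disjoint}. I expect the careful constant $400k^k$ to come from this case analysis, bounding each of the few boundary pairs with Lemma~\ref{lem: size lower bound of A} and Lemma~\ref{lem: bi tree}.
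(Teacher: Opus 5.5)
The gap is in Step~2. You apply property~(1) to an arbitrary $v\in S$ and try to bound all copies of $C_{2k+1}$ through $v$. With only $v$ fixed, the exponents add up to $k$, not $k-1$. Write $\delta=\frac{1}{400k^k}$ and let $q$ be the length of the outside $(u_1,u_2)$-part of a crossing cycle.

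The configuration that survives the parity analysis is the inside-heavy one:
\begin{itemize}
\item $C$ meets $\partial R$ at $u_1,u_2$;
\item its outside part is $u_1wu_2$, where $w$ is a common neighbour of $u_1,u_2$ outside $R$, so there are up to $n$ choices and $q=2$;
\item its inside part is a $(u_1,u_2)$-path of length $2k-1$ through $v$.
\end{itemize}
Split the inside path at $v$ into pieces of lengths $p$ and $2k-1-p$. Lemma~\ref{lem: size lower bound of A} gives $s^{\lceil (p-1)/2\rceil+\lceil (2k-2-p)/2\rceil}=s^{k-1}$ choices for every $p$. So fixing a single interior vertex saves nothing; this is attained when $v$ is a ``hub'' of a blown-up path inside $R$.

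Hence $v$ may lie on about $n s^{k-1}\approx\delta^{k-1}n^{k}$ copies, which is far more than $((n-2k-2)/k)^{k-1}$. Cycles through $v$ lying entirely in $R\cup\partial R$ are a priori of order $s^{k}$ as well. The bound $\sum_a C_k s^a n^{k-1-a}$ in Step~3 is therefore not justified, and property~(1) at an arbitrary $v$ gives no contradiction.

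Your ``main obstacle'' is also misplaced. The case $q=2k-1$ dies at once because $v$ is not a common neighbour. The dangerous case is $q=2$, and that one survives.

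The missing idea is to choose a vertex of bounded degree and fix both of its neighbours on the cycle.
\begin{itemize}
\item Take $z\in S$ of minimum degree in $G[S]$. Planarity gives $d_G(z)\le 5+4=9$.
\item By pigeonhole, some $z_1,z_2\in N(z)$ lie on at least $((n-2k-2)/k)^{k-1}/\binom{9}{2}$ copies of $C_{2k+1}$ through $z_1zz_2$.
\item Fixing $z_1,z_2$ removes exactly one factor of $s$. Cycles inside $R\cup\partial R$ then number at most $(s+4)^{k-1}$.
\item A cycle leaving $R$ is induced, so it must meet $\partial R$ in the non-adjacent pair $\{u_1,u_2\}$ or $\{v_1,v_2\}$.
\item The pair $\{v_1,v_2\}$ is impossible, by odd-circuit arguments against the $(2k-1)$-path $P$ between $u_1$ and $u_2$.
\item For the pair $\{u_1,u_2\}$: first $q\neq 2k-1$. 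Then $q$ is even, by comparing with $u_1v_1u_2$. Then $q=2$, by comparing the odd inside part with $u_1v_2u_2$.
\item This gives at most $n\cdot s^{k-2}$ cycles per split, and $O(k\delta n^{k-1})$ in total, which contradicts the pigeonhole lower bound.
\end{itemize}

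These explicit circuit-parity arguments should also replace the ``colour class'' heuristic in your Step~1. That heuristic does not make sense as stated, because $G$ is not bipartite.
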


\begin{proof}
    Let $\delta = \frac{1}{400k^{k}}$.
    Suppose otherwise; then there are at most $\delta n$ vertices in $R$.
    Let $z$ be a vertex in $R$ with minimum degree in the planar graph induced by the vertices in the interior of $R$.
    Then $d(z) \le 9$~(including the vertices on the boundary).
    By property (1) and the pigeonhole principle, there exist $z_1, z_2 \in N(z)$ such that the number of $C_{2k+1}$ containing $z,z_1,z_2$ is at least $\frac{{{((n-2k-2)/k)}^{k-1}}}{\binom{9}{2}}$.

    We first consider cycles contained in $R$~(including the boundary).
    The number of $(z_1,z_2)$-paths of length $2k-1$ contained in $R$ is at most ${(\delta n+4)}^{k-1}$ by Lemma~\ref{lem: size lower bound of A}~(we do not apply Lemma~\ref{lem: size lower bound of A} to $G$ directly, but to the graph induced by the vertices contained in $R$).

    Thus most copies of $C_{2k+1}$ containing $z,z_1,z_2$ must contain at least one vertex outside $R$.
    In this case, the cycle crosses the boundary of $R$ at least twice.
    There are only two possibilities since the cycle is induced: the cycle contains both $u_1$ and $u_2$, or the cycle contains both $v_1$ and $v_2$.

    For the first case, let $q$ be the length of $(u_1,u_2)$-path in the cycle without $z$.
    First, $0 < q \le 2k-1$ since $u_1u_2$ is not an edge.
    If $q = 2k-1$, then $\{z_1,z_2\} = \{u_1,u_2\}$ which violates our assumption that $z \notin N(u_1) \cap N(u_2)$.
    Then $q$ must be even, otherwise by combining $u_1v_1u_2$, we have a shorter odd circuit, a contradiction.
    We conclude that $q$ must be two; otherwise the cycle contains a $(u_1,u_2)$-path of odd length at most $2k-3$, which together with $u_1v_2u_2$ gives a shorter odd circuit, a contradiction.
    Therefore, the cycle contains $u_1wu_2$ for some $w \in N(u_1) \cap N(u_2)$ outside $R$, a $(u_1,z_1)$-path of length $p$ and a $(u_2,z_2)$-path of length $(2k-3-p)$~(or switching $z_1$ and $z_2$) for some $p \in \{0,1,\ldots,2k-3\}$.
    For the vertex $w$, there are at most $n$ choices.
    For the $(u_1,z_1)$-path of length $p$, there are ${(\delta n)}^{\lceil (p-1)/2 \rceil}$ choices by Lemma~\ref{lem: size lower bound of A}.
    For the $(u_2,z_2)$-path of length $(2k-3-p)$, there are ${(\delta n)}^{\lceil (2k-4-p)/2 \rceil}$ choices.
    In total, the number of such cycles is at most
    \begin{equation*}
    \begin{aligned}
        2 \sum_{p=0}^{2k-3} n \cdot {(\delta n)}^{\lceil (p-1)/2 \rceil} \cdot {(\delta n)}^{\lceil (2k-4-p)/2 \rceil} \le 8k\delta n^{k-1}.
    \end{aligned}
    \end{equation*}

    For the second case when the cycle contains both $v_1$ and $v_2$.
    The distance between $v_1$ and $v_2$ in the cycle must be two.
    Otherwise, combining it with $v_1u_1v_2$ gives a shorter odd circuit.
    Recall that there exists a $(u_1,u_2)$-path of length $2k-1$ denoted by $P$.
    Due to the existence of $P$, the cycle must contain a $(v_1,v_2)$-path of length $2k-1$ which involves a vertex outside $R$, and the two neighbors of $z$ on the cycle are $v_1$ and $v_2$ (equivalently, the cycle contains the path $v_1 z v_2$).
    Then the cycle contains at least one vertex $w$ in $P$ other than $u_1$ and $u_2$.

    Thus there is a $(v_1,w)$-path of length $p$ and a $(v_2,w)$-path of length $(2k-1-p)$ for some positive integer $p$.
    By definition, there exist $(v_1,w)$-paths~(and $(v_2,w)$-paths) of lengths $q$ and $(2k+1-q)$.

    Combining two of the $(v_1,w)$-paths~(or $(v_2,w)$-paths), we have four circuits with length $p+q$, $p+(2k+1-q)$, $(2k-1-p)+q$, $(2k-1-p)+(2k+1-q)$, respectively.
    If $p+q$ is odd, then one of $(p+q)$ and $(2k-1-p)+(2k+1-q)$ is an odd number smaller than $2k+1$, a contradiction.
    If $p+q$ is even, then one of $(2k-1-p+q)$ and $(2k+1-q+p)$ is an odd number smaller than $2k+1$, a contradiction.
    Therefore, there is no such cycle containing both $v_1$ and $v_2$.

    Therefore, the number of copies of $C_{2k+1}$ containing $z,z_1,z_2$ is at most
    \[
        10k\delta n^{k-1} < {((n-2k-2)/k)}^{k-1}/\binom{9}{2},
    \]
     when $n_1$ is large enough, a contradiction.
\end{proof}

\begin{figure}
    \centering
    \includegraphics[width=0.8\textwidth]{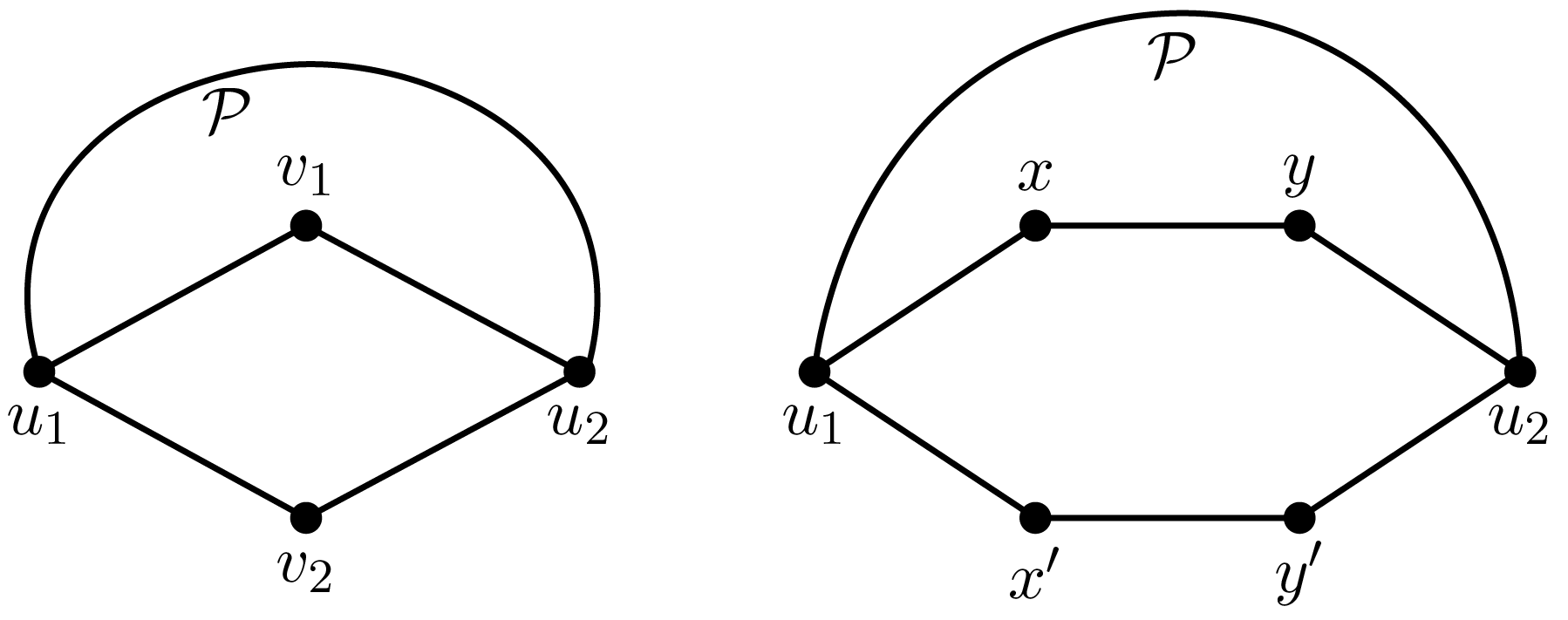}
    \caption{The cycle $u_1 v_1 u_2 v_2$ in Lemma~\ref{lem: 4-cycle empty} and the cycle $u_1 xy u_2 y' x' u_1$ in Lemma~\ref{lem: 6-cycle empty}.}~\label{fig: 4- and 6-cycle empty}
\end{figure}

As a corollary, we obtain the following.

\begin{corollary}\label{cor: large common neighbor}
    Let $G$ be a planar graph on $n$ vertices forbidding $\mathcal{C}^o_{<2k+1}$ with property (1).
    There exists $n_1 = n_1(k)$ such that for all $n \ge n_1$, the following holds.
    Let $(u_1,u_2)$ be a $(2k-1)$-valid pair in $G$.
    If $|N(u_1) \cap N(u_2)| \ge 800k^{k}$, then there exists
    a vertex of degree two in $N(u_1) \cap N(u_2)$.
    % consecutive $v_1,v_2,\ldots,v_{2k+1} \in N(u_1) \cap N(u_2)$ such that there is no vertex in the region $R_1$ with boundary $u_1v_{k}u_2v_{k+1}u_1$ and $R_2$ with boundary $u_1v_{k+1}u_2v_{k+2}u_1$.
    % Therefore, $v_{k+1}$ is a vertex of degree two and by property (1), $p_{2k-1}(u_1,u_2) \ge {((n-2k-2)/k)}^{k-1}$.
\end{corollary}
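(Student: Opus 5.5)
The plan is to combine the cyclic order of the common neighbours around $u_1$ with Lemma~\ref{lem: 4-cycle empty} and a simple counting argument. Set $m = |N(u_1)\cap N(u_2)| \ge 800k^{k}$ and $\delta = \frac{1}{400k^{k}}$.

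\textbf{Step 1: cut the plane into regions.} List the common neighbours $v_1,\ldots,v_m$ in the cyclic order in which the edges $u_1v_i$ leave $u_1$ in the fixed embedding. The paths $u_1v_iu_2$ are internally disjoint. Since $u_1u_2$ is not an edge (otherwise $u_1u_2v_1$ is a triangle), these paths partition the plane into $m$ regions $R_1,\ldots,R_m$. Here $R_i$ has boundary $u_1v_iu_2v_{i+1}u_1$, with indices taken mod $m$. Each $R_i$ is bounded by a $4$-cycle whose two consecutive common neighbours are $v_i,v_{i+1}$, and no other common neighbour lies inside it. So $R_i$ is exactly the region considered in Lemma~\ref{lem: 4-cycle empty}. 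Exactly one region is unbounded; replacing the embedding by an inversion if necessary, or simply discarding that one region, we may treat the rest as bounded. The lemma applies with the $(2k-1)$-valid pair $(u_1,u_2)$.

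\textbf{Step 2: most regions are empty.} For $n\ge n_1$, by Lemma~\ref{lem: 4-cycle empty} every nonempty $R_i$ contains at least $\delta n$ interior vertices. The interiors are pairwise disjoint and there are at most $n$ vertices in total, so at most $1/\delta = 400k^{k}$ regions are nonempty. Since $m \ge 800k^{k} > 2\cdot 400k^{k}+1$, there is an index $i$ with both $R_{i-1}$ and $R_i$ empty. Indeed, each nonempty region "spoils" at most two of the vertices $v_j$ lying on its boundary, and the unbounded region spoils at most two more.

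\textbf{Step 3: find the degree-two vertex.} Take such a $v_i$. Every edge at $v_i$ lies in the closure of $R_{i-1}\cup R_i$, since these two regions surround $v_i$. Both regions have empty interiors, so every neighbour of $v_i$ is a boundary vertex, that is, one of $u_1,u_2,v_{i-1},v_{i+1}$. An edge $v_iv_{i\pm1}$ would create the triangle $u_1v_iv_{i\pm1}$, which is forbidden. Hence $N(v_i)=\{u_1,u_2\}$ and $d(v_i)=2$.

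\textbf{Main obstacle.} The only delicate point is justifying that the regions in Step 1 are precisely those covered by Lemma~\ref{lem: 4-cycle empty}, especially the outer region. The lemma's proof never uses boundedness, because "inside" and "outside" are symmetric on the sphere. Even if one prefers to discard the outer region, the constant $800k^{k}$ leaves enough slack.
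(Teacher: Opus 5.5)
Your argument is the paper's own: the paper simply says the corollary follows directly from Lemma~\ref{lem: 4-cycle empty}, and you have written that deduction out. Steps~1 and~3 are fine. There is, however, a counting slip in Step~2 that breaks the argument exactly at the threshold.

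The inequality $800k^{k} > 2\cdot 400k^{k}+1$ is false. Your accounting allows at most $400k^{k}$ nonempty regions, each spoiling two of the $v_j$, plus two more for the unbounded region. That bounds the number of spoiled $v_j$ only by $800k^{k}+2$, which does not beat $m\ge 800k^{k}$.

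The repair has two parts.
\begin{enumerate}
\item Apply Lemma~\ref{lem: 4-cycle empty} uniformly to all $m$ regions, including the outer one. Your remark that its proof never uses boundedness is correct: the region $R$ there is simply the side of the $4$-cycle containing no other common neighbour.
\item The interiors of the regions contain at most $n-m-2<n$ vertices in total. Since $1/\delta=400k^{k}$ is an integer, at most $400k^{k}-1$ regions are nonempty.
\end{enumerate}
Then at most $800k^{k}-2<m$ of the $v_j$ are spoiled, and some $v_i$ has both adjacent regions empty.

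Your fallback of discarding the outer region does not have ``enough slack.'' It allows at most $400k^{k}-1$ nonempty bounded regions plus the outer one, hence up to $800k^{k}$ spoiled vertices. Pure counting does not rule out an alternating configuration when $m=800k^{k}$ exactly.
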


\begin{proof}
    This follows directly from Lemma~\ref{lem: 4-cycle empty}.
\end{proof}

\begin{lemma}\label{lem: 6-cycle empty}
    Let $G$ be a planar graph on $n$ vertices forbidding $\mathcal{C}^o_{<2k+1}$ with property (1).
    Let $(u_1,u_2)$ be a $(2k-2)$-valid pair, and $\{A_1,A_2\}$ be the $3$-path-decomposition of $(u_1,u_2)$.
    Suppose $x,x' \in A_1$, $y,y' \in A_2$ are four distinct vertices such that $u_1 x y u_2 y' x' u_1$ is a cycle~(see Figure~\ref{fig: 4- and 6-cycle empty}).
    Denote by $R$ the region with boundary $u_1 x y u_2 y' x' u_1$.
    % That is, no vertex from $A_1 \cup A_2$ is inside $R$.
    Assume no vertex from $A_1 \cup A_2$ is inside $R$.
    Then there exists $n_2 = n_2(k)$ such that, if $n \ge n_2$ and $R$ is not empty, then $R$ contains at least $\frac{1}{1000k^{k}} n$ vertices.
\end{lemma}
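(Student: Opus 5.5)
I will follow the proof of Lemma~\ref{lem: 4-cycle empty} closely. Set $\delta = \frac{1}{1000k^{k}}$ and suppose, for contradiction, that $R$ is non-empty and contains at most $\delta n$ vertices in its interior. Since the interior of $R$ induces a planar graph, choose $z$ of minimum degree in it. Counting the six boundary vertices as possible extra neighbours, $z$ has degree at most $5+6=11$. By property (1), $z$ lies in at least $h_k(n)-h_k(n-1) \ge ((n-2k-2)/k)^{k-1}$ copies of $C_{2k+1}$. Pigeonholing over pairs of neighbours gives $z_1,z_2\in N(z)$ such that at least $((n-2k-2)/k)^{k-1}/\binom{11}{2}$ of these cycles contain the path $z_1zz_2$. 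The goal is to show that the number of such cycles is $O(\delta n^{k-1})$, with an explicit constant that is at most about $20k$, which is a contradiction for $n\ge n_2$.

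\textbf{Cycles inside $R$.} Cycles lying entirely in $R$ (boundary included) are counted by applying Lemma~\ref{lem: size lower bound of A} to the subgraph induced on $R$. This gives at most $(\delta n+6)^{k-1}$ of them, which is negligible.

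\textbf{Cycles leaving $R$.} Any other cycle must leave $R$, so it crosses the boundary $u_1xyu_2y'x'u_1$ in two non-adjacent boundary vertices, say $a$ and $b$. The cycle is induced, so it cannot use two adjacent boundary vertices as its entry and exit. The cycle then splits into an inner $(a,b)$-path through $z$, of length $r$, and an outer path, of length $2k+1-r$. The boundary gives two $(a,b)$-paths whose lengths $s$ and $6-s$ have the same parity. If $r\not\equiv s \pmod 2$, the inner path together with the shorter boundary arc (length at most $3$) forms an odd circuit. Since $r\le 2k-1$ (the outer path has length at least $2$), this circuit is shorter than $2k+1$ unless the inner path is long. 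I will use this to restrict the possible pairs $\{a,b\}$ and the parities.

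The pairs $\{a,b\}$ split into three families.
\begin{enumerate}[label=(\roman*)]
\item $\{u_1,u_2\}$. The outer $(u_1,u_2)$-path must avoid the $3$-path-decomposition structure. Parity forces the inner $(u_1,u_2)$-path through $z$ to have odd length. Using the $(2k-2)$-valid path and Lemma~\ref{lem: t-decomp and 2k+1-t-decomp}, I expect to show that the inner path has length exactly $3$, which would put a vertex of $A_1\cup A_2$ inside $R$, contradicting the hypothesis. Otherwise the outer path has length $3$, so it is one of $O(n)$ paths $u_1 a' b' u_2$ with $a'\in A_1$, $b'\in A_2$ (Observation~\ref{obs: forest between Ai and Ai+1} bounds the number of such $3$-paths by $O(n)$). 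The remaining $(u_i,z_j)$-segments inside $R$ then contribute $(\delta n)^{k-2}$, exactly as in the first case of Lemma~\ref{lem: 4-cycle empty}.
\item Pairs such as $\{x,y'\}$, $\{x',y\}$, $\{x,x'\}$, $\{y,y'\}$, $\{x,u_2\}$, $\{u_1,y\}$, and so on. Here I plan to repeat the ``four circuits'' parity argument from the second case of Lemma~\ref{lem: 4-cycle empty}. One combines the inner path and the outer path with the known paths to the $(2k-2)$-valid path $P$ from $u_1$ to $u_2$ (which must be met by the outer part of the cycle or be separated from it). This shows that either no such cycle exists, or the outer part is a short path through $O(1)$ free vertices, one of which ranges over at most $n$ choices.
\item In every surviving case, the count is at most $n\cdot(\delta n)^{k-2}$ times a factor polynomial in $k$, i.e.\ $O(k\,\delta\, n^{k-1})$.
\end{enumerate}

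I expect step (ii) to be the main obstacle: the hexagonal boundary has many more crossing pairs than the square did, and for each one the parity bookkeeping must show either impossibility or that the outer contribution is only linear in $n$. For pairs such as $\{x,y'\}$, where the cycle might traverse $P$ partially, I would first try the device of Lemma~\ref{lem: 4-cycle empty}: pick a vertex $w$ of $P$ used by the cycle outside $R$, and compare the cycle's $(x,w)$- and $(y',w)$-segments with the paths $x\,u_1\cdots w$ and $y'\,u_2\cdots w$ along $P$, whose lengths are $1+q$ and $1+(2k-2-q)$. I expect all four resulting circuit lengths to force a short odd circuit. Summing the three contributions gives fewer than $((n-2k-2)/k)^{k-1}/55$ cycles once $n_2$ is large. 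Since $\delta=1/(1000k^k)$, this contradicts the pigeonhole bound, so $R$ must contain at least $\delta n$ vertices.
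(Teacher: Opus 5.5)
\textbf{Assessment.} Your skeleton is the paper's. It uses the same $\delta$, a minimum-degree interior vertex $z$ with $d(z)\le 11$, pigeonholing to a path $z_1zz_2$, the $(\delta n+6)^{k-1}$ bound for cycles inside $R$, and a split by the pair of non-adjacent boundary vertices where $C$ leaves $R$. You should also say why there is only one excursion: two would give four pairwise non-adjacent vertices on the hexagon. The gap is step (ii). It carries nearly all the content but is only asserted, and the one mechanism you propose for it, parity against $P$, fails for some pairs.

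\textbf{The pairs $\{u_1,y\}$, $\{u_1,y'\}$, $\{u_2,x\}$, $\{u_2,x'\}$.} Take $\{u_1,y\}$. Both boundary arcs ($u_1xy$ and $u_1x'y'u_2y$) are even, so the arcs of $C$ have lengths $2$ and $2k-1$. Suppose the outer arc is the long one, so the inner arc is $u_1zy$. Closing either arc of $C$ with $u_1xy$, $u_1x'y'u_2y$ or $P\cup u_2y$ gives circuits of lengths $4,6,2k+1$ and $2k+1,2k+3,4k-2$. None is odd and short, and the outer arc need not meet $P$ except at $u_1$. These cycles cannot be dismissed by counting either. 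The only available bound on outer $(u_1,y)$-paths of length $2k-1$ is $n^{k-1}$ from Lemma~\ref{lem: size lower bound of A}, the same order as the bound you want to contradict. What excludes them is the hypothesis on $R$: $u_1zyu_2$ is a $(u_1,u_2)$-path of length $3$, so $z\in A_1$ would lie inside $R$. Only the short outer arc $u_1wy$ survives.

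\textbf{Case (i).} Here too the dichotomy comes from the boundary paths $u_1xyu_2$ and $u_1x'y'u_2$, not from $P$, whose even length gives nothing. Your parity claim is also wrong. The arcs of $C$ have lengths $3$ and $2k-2$, and in the surviving case the inner arc has even length $2k-2$.

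\textbf{The pairs $\{x,x'\}$, $\{y,y'\}$, $\{x,y'\}$, $\{x',y\}$.} Here you do need $P$, and two ingredients are missing from your sketch.
\begin{enumerate}
\item The planarity fact that forces the outer arc to meet $P$. $P$ lies outside $R$ and splits the exterior into the regions bounded by $u_1xyu_2\cup P$ and $u_1x'y'u_2\cup P$, with $x$ on one side and $x'$, $y'$ on the other. For $\{x,x'\}$ the meeting vertex is not $u_1$, by inducedness.
\item Enough comparison paths. For $\{x,y'\}$, parity forces the arcs of $C$ to have lengths $3$ and $2k-2$. In the long-outer case, let $w\in P$ be at distance $j$ from $u_1$ along $P$, and let $p_1$ be the length of the outer $(x,w)$-arc. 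If you use only $xu_1P[u_1,w]$ and $y'u_2P[w,u_2]$, as you propose, then for $p_1=j\pm1$ every circuit is even or has length at least $2k+1$. The contradiction there needs two more paths:
\begin{itemize}
\item $y'x'u_1P[u_1,w]$, of length $j+2$, which gives an odd circuit of length $2k-1$ when $p_1=j+1$;
\item $xyu_2P[w,u_2]$, of length $2k-j$, which does the same when $p_1=j-1$.
\end{itemize}
These are the paths of lengths $p_2,\,2k+1-p_2$ from $x$ and $p_2+1,\,2k-p_2$ from $y'$ that the paper uses.
\end{enumerate}
With these repairs your plan coincides with the paper's proof. Only $\{u_1,u_2\}$ (outer arc $u_1a'b'u_2$) and the four $\{u_1,y\}$-type pairs (outer arc $u_1wy$) contribute, each $O(k\,n(\delta n)^{k-2})$ cycles.
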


\begin{proof}
    Let $\delta = \frac{1}{1000k^{k}}$.
    Denote by $P$ the $(u_1,u_2)$-path of length $2k-2$ in $G$~(see Figure~\ref{fig: 4- and 6-cycle empty}).
    Suppose otherwise; then there are at most $\delta n$ vertices inside $R$.
    Let $z$ be a vertex in $R$ with minimum degree in the planar graph induced by the vertices in the interior of $R$.
    Then $d(z) \le 11$~(including the vertices on the boundary).
    By property (1) and the pigeonhole principle, there exist $z_1, z_2 \in N(z)$ such that the number of $C_{2k+1}$ containing $z,z_1,z_2$ is at least $\frac{{{((n-2k-2)/k)}^{k-1}}}{\binom{11}{2}}$.

    By a similar argument as in the proof of Lemma~\ref{lem: 4-cycle empty}, the number of such cycles contained in $R$~(including the boundary) is at most ${(\delta n+6)}^{k-1}$ by Lemma~\ref{lem: size lower bound of A}.
    Thus most copies of $C_{2k+1}$ containing $z,z_1,z_2$ must contain at least one vertex outside $R$.

    Let $C$ be such a cycle, and let $P_C$ be the maximal path in $C$ without vertices in $R$~(excluding the boundary).
    The endpoints of $P_C$ must be two vertices in $\{u_1,u_2,x,x',y,y'\}$.
    There is exactly one such path in $C$: if there were two such paths $P_C$ and $P_C'$, then the four endpoints of $P_C$ and $P_C'$ would be distinct and independent, a contradiction.
    Let $q$ denote the length of $P_C$; then $q \ge 2$ since there is at least one vertex outside $R$.
    We now consider the endpoints of $P_C$.
    There are several possibilities as follows.
    \begin{enumerate}
        \item $u_1$ and $u_2$.
        \item $x$ and $x'$~(or equivalently $y$ and $y'$).
        \item $x$ and $y'$~(or equivalently $y$ and $x'$).
        \item $u_1$ and $y$~(or equivalently $u_1$ and $y'$, $u_2$ and $x$, $u_2$ and $x'$).
    \end{enumerate}

    In the first case, we first claim that $3 \le q \le 2k-2$, since $u_1u_2$ is not an edge and there is no $(u_1,u_2)$-path of length two.
    Also $q \neq 2k-2$, since $z \notin A_1 \cup A_2$.
    The length $q$ must be odd; otherwise, combining the path with $u_1xyu_2$ gives a shorter odd circuit.
    There also exists a $(u_1,u_2)$-path of length $2k+1-q$ in $C$.
    Combining it with $u_2 yx u_1$ gives a circuit of length $(2k+1-q) + 3 = 2k+4 - q$, which is odd.
    The only possibility is $q = 3$.

    Then there are at most $n$ choices for $P_C$ by Lemma~\ref{lem: size lower bound of A}.
    The remaining part of the cycle consists of a $(u_1,z_1)$-path of length $p$ and a $(u_2,z_2)$-path of length $(2k-4 - p)$~(or switching $z_1$ and $z_2$) for some $p \in \{0,1,\ldots,2k-4\}$.
    For the $(u_1,z_1)$-path of length $p$, there are ${(\delta n)}^{\lceil (p-1)/2 \rceil}$ choices by Lemma~\ref{lem: size lower bound of A}.
    For the $(u_2,z_2)$-path of length $(2k-4 - p)$, there are ${(\delta n)}^{\lceil (2k-5 - p)/2 \rceil}$ choices.
    In total, the number of such cycles is at most
    \begin{equation*}
    \begin{aligned}
        2n \sum_{p=0}^{2k-4} \left( {(\delta n)}^{\lceil (p-1)/2 \rceil} \cdot {(\delta n)}^{\lceil (2k-5 - p)/2 \rceil} \right) \le 4k\delta n^{k-1}.
    \end{aligned}
    \end{equation*}

    In the second case, the distance between $x$ and $x'$ in $C$ must be exactly two, otherwise we have a shorter odd circuit by combining $x u_1 x'$.
    That is, either $q = 2$ or $q = 2k-1$.
    When $q=2$, the vertices $x$ and $x'$ have a common neighbor outside $R$.
    By the existence of $P$, the only possible common neighbor is $u_1$, a contradiction since $u_1$ is not outside $R$.
    Then we must have $q = 2k-1$ and thus $\{z_1,z_2\} = \{x,x'\}$.
    Then $C$ must contain at least one vertex $w$ in $P$ other than $u_1$.
    Thus there is a $(x,w)$-path of length $p_1$ and a $(x',w)$-path of length $(2k-1 - p_1)$ for some $p_1 \in \{1,\ldots,2k-2\}$.
    By definition, there exist $(x,w)$-paths~(and $(x',w)$-paths) of length $p_2$ and $(2k+1 - p_2)$ for some $p_2 \in \{1,\ldots,2k\}$.
    An argument similar to the proof of Lemma~\ref{lem: 4-cycle empty} yields a shorter odd circuit, a contradiction.
    Therefore, no cycle of the second type exists.

    In the third case, an argument similar to the second type shows that $q = 2k-2$.
    That is, one of $z_1,z_2$ is in $\{x,y'\}$ and the other one is adjacent to the other one in $\{x,y'\}$.
    If the cycle contains $y$, then there are at most $n^{k-2}$ choices for the $(x,y')$-path of length $2k-3$ which is negligible.
    From now on, we may therefore assume that the cycle does not contain $y$, and equivalently does not contain $x'$.
    Then the cycle must contain a vertex $w$ in $P$.
    Thus there is a $(x,w)$-path of length $p_1$ and a $(y',w)$-path of length $(2k-2 - p_1)$ for some positive integer $p_1$.
    By definition, there exist $(x,w)$-paths of lengths $p_2$ and $(2k+1 - p_2)$.
    There also exist $(y',w)$-paths of lengths $p_2 +1$ and $(2k - p_2)$ for some $p_2$.

    If $p_1+p_2$ is odd, then $p_1+p_2 \ge 2k+1$ by considering the circuit with two $(x,w)$-paths.
    Then the circuit consisting of the $(y',w)$-path of length $2k-2-p_1$ and the $(y',w)$-path of length $2k-p_2$ has odd length $(2k-2-p_1 + 2k-p_2) < 2k+1$, a contradiction.
    If $p_1+p_2$ is even, then one of $p_1 + 2k+1-p_2$ and $(2k-2-p_1) + p_2+1$ is a smaller odd length, leading to a contradiction in the same way.
    Therefore, the number of cycles of the third type is at most $2n^{k-2}$.

    In the fourth case, the same argument as before shows that $q = 2k-1$ or $q=2$.
    If $q=2k-1$, then $\{z_1,z_2\} = \{u_1,y\}$ and thus $z \in A_1$, a contradiction.
    Hence, $q=2$, and the cycle contains $u_1 w y$ for some $w$ outside $R$, a $(u_1,z_1)$-path of length $p$ and a $(y,z_2)$-path of length $(2k-3 - p)$~(or switching $z_1$ and $z_2$) for some $p \in \{0,1,\ldots,2k-3\}$.
    For $w$, there are at most $n$ choices.
    For the $(u_1,z_1)$-path of length $p$, there are ${(\delta n)}^{\lceil (p-1)/2 \rceil}$ choices by Lemma~\ref{lem: size lower bound of A}.
    For the $(y,z_2)$-path of length $(2k-3 - p)$, there are ${(\delta n)}^{\lceil (2k-4 - p)/2 \rceil}$ choices.
    In total, the number of such cycles is at most
    \begin{equation*}
    \begin{aligned}
        2n \sum_{p=0}^{2k-3} \left( {(\delta n)}^{\lceil (p-1)/2 \rceil} \cdot {(\delta n)}^{\lceil (2k-4 - p)/2 \rceil} \right) \le 4k\delta n^{k-1}.
    \end{aligned}
    \end{equation*}

    Thus the total number of copies of $C_{2k+1}$ containing $z,z_1,z_2$ is at most
    \[
        10k\delta n^{k-1} < {((n-2k-2)/k)}^{k-1}/\binom{11}{2},
    \]
    a contradiction.
\end{proof}

Lemmas~\ref{lem: 4-cycle empty} and~\ref{lem: 6-cycle empty} show that, for these special $4$-cycles and $6$-cycles, the interior is either empty or contains many vertices.

% -------------------------------------------------------------
\subsection{Find a vertex of degree two or a special path from large \texorpdfstring{$p_3(u_1,u_2)$}{p3(u1,u2)} }\label{subsec: special path or vertex of degree two}

If we have a vertex $x$ of degree two with neighbors $u_1,u_2$, then property (1) implies that $p_{2k-1}(u_1,u_2)$ is large~($p_{2k-1}(u_1,u_2) \ge {((n-2k-2)/k)}^{k-1}$).
Similarly, Lemma~\ref{lem: with long path} shows that if there is a special path $u_1 x_{\ell} y_{\ell} u_2$, then $p_{2k-2}(u_1,u_2)$ is still large, up to an $\epsilon$ fraction: $p_{2k-2}(u_1,u_2) \ge (1-\epsilon){((n-3\ell)/k)}^{k-1}$.

Both structures will be very useful in the proof.
Corollary~\ref{cor: large common neighbor} states that if $p_2(u_1,u_2)$ is large for a $(2k-1)$-valid pair $(u_1,u_2)$, then there exists a vertex of degree two in $N(u_1) \cap N(u_2)$.

We now prove that if $p_3(u_1,u_2)$ is large for a $(2k-2)$-valid pair $(u_1,u_2)$, then we can find either a vertex of degree two or a special path.

% \begin{lemma}\label{lem: maximum matching}
%     Let $G$ be a planar graph on $n$ vertices forbidding $\mathcal{C}^o_{<2k+1}$ with property (1).
%     Let $(u_1,u_2)$ be a $2k-2$-valid-pair in $G$.
%     Let $\{A_1,A_2\}$ be a $3$-path-decomposition of $(u_1,u_2)$ as (\ref{eq: Ai}).
%     Suppose there is no vertex of degree two in $A_1 \cup A_2$.
%     Suppose there is a maximal matching $M = {\{x_i y_i\}}_{1 \le i \le m}$ in the bipartite graph with parts $A_1$ and $A_2$.
%     Let $R_i$ denote the region with boundary $u_1 x_i y_i u_2  y_{i+1} x_{i+1} u_1$ for each $1 \le i \le m-1$.

%     If $R_i$ is not empty, then there are at least $\delta n$ vertices in $R_i$.

% \end{lemma}

% \begin{proof}

% \end{proof}

\begin{lemma}\label{lem: vertex of degree two or long path}
    Let $G$ be a planar graph on $n$ vertices forbidding $\mathcal{C}^o_{<2k+1}$ with property (1).
    Let $(u_1,u_2)$ be a $(2k-2)$-valid pair in $G$.
    Let $\{A_1,A_2\}$ be the $3$-path-decomposition of $(u_1,u_2)$.
    There exists a constant $n_3 = n_3(k)$ such that the following holds.

    If $n \ge n_3$ and $p_3(u_1,u_2) \ge n^{0.2}$, then either there is a vertex of degree two in $A_1 \cup A_2$, or there exists a path $x_1y_1 \ldots x_{3\ell} y_{3\ell}$ such that $x_i \in A_1$ and $y_i \in A_2$ for all $1 \le i \le 3\ell$, and every $x_i,y_i$ has degree three, that is, a special path $u_1x_\ell y_{\ell} u_2$.
\end{lemma}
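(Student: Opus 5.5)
We assume that no vertex of $A_1\cup A_2$ has degree two and aim to produce the long alternating path of degree-three vertices. Let $H$ be the bipartite graph on $A_1\cup A_2$ whose edges are the $A_1$--$A_2$ edges. Then $p_3(u_1,u_2)=|E(H)|$. By Lemma~\ref{lem: Ai Aj disjoint} the sets $A_1,A_2$ are disjoint. By Observation~\ref{obs: forest between Ai and Ai+1}, $H$ is a forest. Also $|A_1|+|A_2|\ge |E(H)|\ge n^{0.2}$, so the relevant structure is large.

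\textbf{Step 1 (degrees).} Every $x\in A_1$ is adjacent to $u_1$ and to some vertex of $A_2$. By the oddness arguments of Lemmas~\ref{lem: Ai Aj disjoint} and~\ref{lem: t-decomp and 2k+1-t-decomp}, $x$ is not adjacent to $u_2$. Since $x$ has no degree-two neighbour-type escape, $d(x)\ge3$ under our assumption. I will bound how many vertices of $A_1\cup A_2$ can have degree at least $4$, or have a neighbour outside $\{u_1,u_2\}\cup A_1\cup A_2$. The tool is the planar cyclic order around $u_1$ and $u_2$. Take two components $T,T'$ of $H$, or two consecutive branches of one component, that are consecutive in the rotation at $u_1$. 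Choose $x\in A_1\cap T$, $y\in A_2\cap T$, $x'\in A_1\cap T'$, $y'\in A_2\cap T'$ appropriately. They bound a $6$-cycle $u_1xyu_2y'x'u_1$ containing no vertex of $A_1\cup A_2$ inside. Such regions, together with the regions bounded by $u_1xyx'u_1$-type pieces inside a tree component, partition a disk around $u_1,u_2$. By Lemma~\ref{lem: 6-cycle empty}, each such region is either empty or has at least $\frac{1}{1000k^k}n$ vertices. Hence at most $1000k^k$ regions are nonempty. An extra neighbour of a vertex in $A_1\cup A_2$, or an extra $H$-edge creating a degree-$\ge4$ vertex, must lie in or bound a nonempty region. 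So all but $O(k^k)$ vertices of $A_1\cup A_2$ have degree exactly three, with neighbourhood $u_1$ or $u_2$ plus their $H$-neighbours.

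\textbf{Step 2 (long paths).} A degree-three vertex $x\in A_1$ with neighbour $u_1$ has exactly two $H$-neighbours. So in the forest $H$, all but $O(k^k)$ vertices have $H$-degree exactly two. A forest in which all but $c=O(k^k)$ vertices have degree $2$ has at most $O(c)$ leaves and branch vertices. Deleting these $O(c)$ special vertices leaves at most $O(c)$ paths covering $|A_1\cup A_2|-O(c)\ge n^{0.2}-O(c)$ vertices. For $n\ge n_3$, pigeonhole gives a path of length at least $n^{0.2}/O(k^k)\ge 6\ell$ whose vertices all have degree three in $G$. It alternates between $A_1$ and $A_2$, which gives the required $x_1y_1\ldots x_{3\ell}y_{3\ell}$.

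\textbf{Main obstacle.} The hard part is Step 1: showing that a vertex of degree at least $4$ or with an outside neighbour forces a \emph{distinct} nonempty $6$-cycle region, so that the counting is injective. This needs care in two places. First, regions inside a single tree component are bounded by closed walks like $u_1 x y x' u_1$ rather than genuine $6$-cycles, so Lemma~\ref{lem: 6-cycle empty} or a variant of Lemma~\ref{lem: 4-cycle empty}'s argument must be adapted to them. Second, an extra neighbour might be $u_2$-side or lie in an unbounded outer region. I would first handle this by choosing the embedding so that the path $P$ of length $2k-2$ lies in the outer face. I would then argue that any outside neighbour lies in one of the bounded regions between consecutive branches, each of which contains $\Omega(n/k^k)$ vertices if nonempty.
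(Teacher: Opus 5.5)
Your argument is correct, and it takes a genuinely different route from the paper's.

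\textbf{The paper's route.} It takes a maximum matching $\{x_iy_i\}$ between $A_1$ and $A_2$ and orders it by the rotation at $u_2$. It uses Corollary~\ref{cor: large common neighbor} to bound the gaps between consecutive matched vertices, which gives $m\ge n^{0.1}$. It then shows that all but $1000k^k$ of the $6$-cycle regions $u_1x_iy_iu_2y_{i+1}x_{i+1}u_1$ are ``almost empty''. Finally, inside a window of $8\ell+1$ consecutive almost-empty regions, it argues locally that $N(x_i)\subseteq\{u_1,y_{i-1},y_i,y_{i+1}\}$ and propagates the zig-zag $x_iy_{i+1}x_{i+1}y_{i+2}\cdots$.

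\textbf{Your route.} You work globally with the whole forest $H$. The faces of the plane graph spanned by $u_1,u_2$ and $A$ are special $4$- or $6$-cycles, so only $O(k^k)$ of them are nonempty, and hence all but $O(k^k)$ vertices of $A$ have degree exactly three. Suppressing degree-two vertices of $H$ leaves a forest on the exceptional set, so there are $O(k^k)$ maximal degree-two paths, and pigeonhole gives a long one.

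\textbf{Comparison.} Your route avoids the matching normalization and the window bookkeeping. It also yields a stronger structural fact: up to $O(k^k)$ vertices, $A$ is covered by $O(k^k)$ alternating paths of degree-three vertices. The paper's matching-based version is the one later reused for consecutive sub-families $A_1',A_2'$ (the Remark). You would have to re-check that use in your framework.

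\textbf{The obstacles you flag are not real.}
\begin{itemize}
\item The region $u_1xyx'u_1$ is a genuine $4$-cycle. Lemma~\ref{lem: 4-cycle empty} applies to it directly with the pair $(u_1,y)$, which is $(2k-1)$-valid via $yu_2$ followed by the $(u_1,u_2)$-path of length $2k-2$; also $N(u_1)\cap N(y)\subseteq A_1$. Symmetrically, use $(x,u_2)$ for $u_2yxy'u_2$.
\item Every face is of one of these two types or a $6$-cycle $u_1xyu_2y'x'u_1$. The spanned graph is $2$-connected once $|A_1|,|A_2|\ge 2$; if one is a singleton, $H$ is a star and Corollary~\ref{cor: large common neighbor} applies.
\item A facial cycle cannot contain three consecutive $A_1$--$A_2$ edges. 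Suppose a facial segment is $y_0x_1y_1x_2$, and take the Jordan curve $u_1x_1y_1x_2u_1$. The face fills the angle between $y_1x_1$ and $y_1x_2$, so $y_1u_2$ leaves to the opposite side. It also fills the angle between $x_1y_0$ and $x_1y_1$, so $y_0$, and hence $u_2$, lies on the face's side. This is a contradiction.
\item No special embedding is needed. The face containing the path of length $2k-2$ is just one more nonempty face.
\item No injectivity is needed either. Each face has at most four vertices of $A$ on its boundary. All edges of $G$ between $A$ and $\{u_1,u_2\}\cup A$ lie in the spanned graph, since $A_1A_1$, $A_2A_2$, $A_1u_2$ and $A_2u_1$ edges create triangles. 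So every leaf of $H$ has degree two there, and must have an outside neighbour lying in a nonempty incident face. A forest has fewer branch vertices than leaves.
\end{itemize}
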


\begin{proof}
    Let $A = A_1 \cup A_2$.
    By Lemma~\ref{lem: size lower bound of A}, we have $|A| \ge n^{0.2}$.
    Without loss of generality, we may assume $|A_2| \ge |A_1|$.
    Then $|A_2| \ge n^{0.2}/2$.
    From now on, assume that there is no vertex of degree two in $A_1 \cup A_2$.
    Denote by $\mathcal{P}$ the $(u_1,u_2)$-path of length $2k-2$ in $G$.

    Let $M = {\{x_i y_i\}}_{1 \le i \le m}$ be a maximum matching in the bipartite subgraph induced by $A_1$ and $A_2$.
    Assume that ${\{y_i\}}_{1 \le i \le m}$ occur in counter-clockwise order around $u_2$. Then ${\{x_i\}}_{1 \le i \le m}$ occur in clockwise order around $u_1$~(see Figure~\ref{fig: region R_i}).
    Without loss of generality, if a vertex $y \in A_2$ lies between $y_i$ and $y_{i+1}$, then $y$ is not adjacent to $x_{i+1}$; otherwise, we could replace $x_{i+1}y_{i+1}$ by $x_{i+1}y$.
    Similarly, no vertex $y$ in $A_2$ is before $y_1$ in the counter-clockwise order around $u_2$ that is adjacent to $x_1$, otherwise we can replace $x_1y_1$ by $x_1y$.
    By definition, for a vertex $y$ in $A_2$ between $y_{i}$ and $y_{i+1}$, it must be adjacent to $x_{i}$ since there is no vertex of degree two.
    Similarly, for a vertex $x$ in $A_1$ between $x_{i}$ and $x_{i+1}$, it must be adjacent to at least one of $\{y_i, y_{i+1}\}$.

    Let $\alpha_i$ be the number of vertices in $A_2$ between $y_i$ and $y_{i+1}$, $1 \le i \le m-1$.
    Let $\alpha_m$ be the number of vertices in $A_2$ after $y_m$.
    Since there is no vertex in $A_2$ before $y_1$, we have
    \[
        |A_2| = \sum_{i=1}^{m} \alpha_i + m.
    \]

    If $\alpha_i \ge 800k^{k}$ for some $i$, then by Corollary~\ref{cor: large common neighbor} there exists a vertex of degree two, a contradiction.
    Thus
    \begin{equation*}
    \begin{aligned}
        n^{0.2}/2 \le |A_2| &= \sum_{i=1}^{m} \alpha_i + m \le (800k^{k} + 1)m.
    \end{aligned}
    \end{equation*}
    Hence $m \ge n^{0.1}$ when $n$ is sufficiently large.

    \begin{figure}
        \centering
        \includegraphics[width=0.5\textwidth]{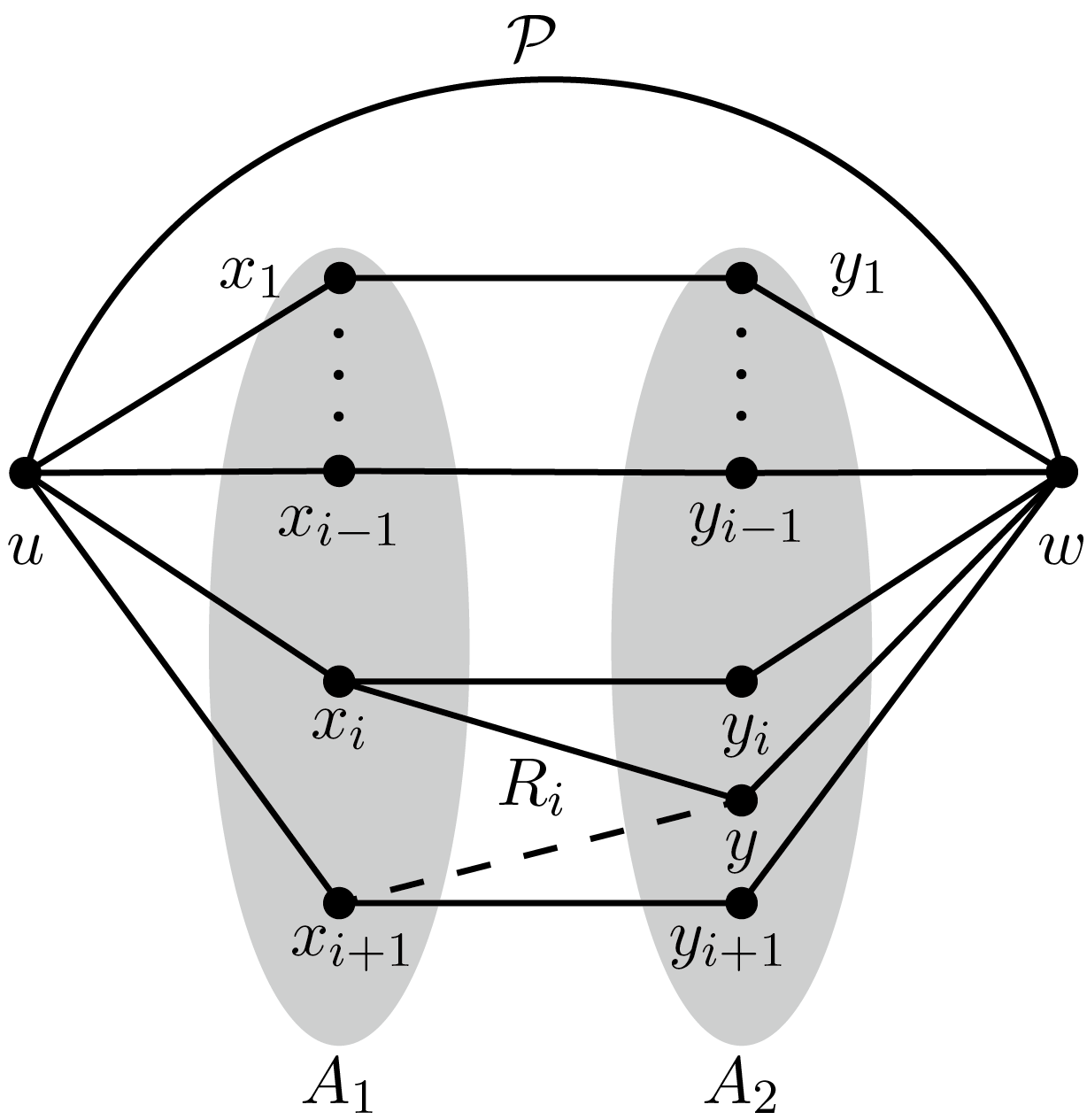}
        \caption{The region $R_i$ and the case when $x_i$ is adjacent to a vertex $y$ in $R_i$ other than $y_i$ and $y_{i+1}$. The dotted line is an impossible edge.}
        \label{fig: region R_i}
    \end{figure}

    Let $R_i$ be the region with boundary $u_1 x_i y_i u_2 x_{i+1} y_{i+1} u_1$~(see Figure~\ref{fig: region R_i}).
    We say that $R_i$ is \textbf{almost empty} if it contains no vertex of $V(G)\setminus (A_1\cup A_2)$.

    For every $R_i$, there are at most $3200k^{k}$ vertices from $A_1 \cup A_2$ inside $R_i$; otherwise Corollary~\ref{cor: large common neighbor} gives a vertex of degree two.
    Moreover, if $R_i$ is not almost empty, that is, if it contains a vertex of $V(G)\setminus (A_1 \cup A_2)$, then the vertices from $A_1 \cup A_2$ further split $R_i$ into smaller regions whose boundaries are either $4$-cycles or $6$-cycles.
    One of these smaller regions is non-empty, and hence, by Lemmas~\ref{lem: 4-cycle empty} and~\ref{lem: 6-cycle empty}, contains at least $\frac{1}{1000k^{k}} n$ vertices.
    Thus, if $R_i$ is not almost empty, then it contains at least $\frac{1}{1000k^{k}} n$ vertices.
    Hence at most $1000k^{k}$ regions are not almost empty.

    Let
    \[
    I = \{i \in [4\ell, m-4\ell] \mid \forall j \in [i-4\ell, i+4\ell], \text{$R_j$ is almost empty}\}
    \]

    We have $|I| \ge m - 8\ell - 1000k^{k} 8\ell = \Omega(n^{0.1})$ when $n$ is large enough.
    Pick any $i \in I$.
    Consider the neighbors of $x_i$.
    By our choice of the maximum matching, $x_i$ has no neighbors in $R_{i-1}$ except $y_{i-1}$ and $y_i$.
    If $x_i$ has a neighbor $y$ in $R_{i+1}$ with $y \neq y_{i}, y_{i+1}$~(see Figure~\ref{fig: region R_i}), then $y\in A_2$ and $y$ must be a vertex of degree two, a contradiction.
    Hence $x_i$ has at most four neighbors, namely $u_1, y_{i-1}, y_i, y_{i+1}$.
    Since $x_i$ is not a vertex of degree two, it must be adjacent to at least one of $y_{i-1}$ and $y_{i+1}$.

    % If $x_i$ is not a vertex of degree two, then $x_i$ is adjacent to at least one of $y_{i-1}$ and $y_{i+1}$.
    Notice that $x_i$ cannot be adjacent to both $y_{i-1}$ and $y_{i+1}$; otherwise $y_i$ would be a vertex of degree two.
    Without loss of generality, we may assume $x_i$ is adjacent to $y_{i+1}$.
    Then $x_{i+1}$ must be adjacent to $y_{i+2}$, since $x_{i+1}y_i$ cannot be an edge by planarity.
    Continuing in this way, we obtain a path $x_i y_{i+1} x_{i+1} y_{i+2} \ldots x_{i+3\ell} y_{i+3\ell+1}$ such that every $x_j,y_j$ has degree three.
\end{proof}

\begin{remark*}
    In the statement of Lemma~\ref{lem: vertex of degree two or long path}, $\{A_1,A_2\}$ is the $3$-path-decomposition of $(u_1,u_2)$.
    If we replace $\{A_1,A_2\}$ by $\{A_1',A_2'\}$ where $A_i'$ is a consecutive subset of $A_i$ and replace $p_3(u_1,u_2)$ by the number of $(u_1,u_2)$-paths of length $3$ using a vertex in $A_1'$ and a vertex in $A_2'$, then the lemma still holds.
    This observation will be used only in the proof of Lemma~\ref{lem: large star of long path}.
\end{remark*}

We now use Lemma~\ref{lem: vertex of degree two or long path} to prove the following stronger statement.
Lemma~\ref{lem: large star of long path} shows that if $p_3(u_1,u_2)$ is large for a $(2k-2)$-valid pair $(u_1,u_2)$, then we can find either a special path or a degree-two vertex together with a rather large star joining $A_1$ and $A_2$.

\begin{lemma}\label{lem: large star of long path}
    Let $\epsilon$ be a fixed constant.
    Let $G$ be a planar graph on $n$ vertices forbidding $\mathcal{C}^o_{<2k+1}$ and satisfying properties (1) and (2).
    There exists $n_3 = n_3(\epsilon,k)$ such that the following holds whenever $n\ge n_3$.
    Let $(u_1,u_2)$ be a $(2k-2)$-valid pair with $p_3(u_1,u_2) \ge \epsilon n$.
    Let $A_1$ and $A_2$ be the $3$-path-decomposition of $(u_1,u_2)$.
    Either
    \[
    \max_{x \in A_1, y \in A_2} \{ |N(x) \cap N(u_2)|, |N(y) \cap N(u_1)|\} \ge p_3(u_1,u_2) - 2\epsilon n,
    \]
 or there exists a path $x_{1}y_{1} \ldots x_{3\ell} y_{3\ell}$ such that $x_i \in A_1$ and $y_i \in A_2$ for all $1 \le i \le 3\ell$, and every $x_i,y_i$ has degree three, that is, a special path.
\end{lemma}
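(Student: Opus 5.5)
Assume there is no special path; we aim to show that a single vertex of $A_1$ (resp.\ $A_2$) is adjacent to almost all of $A_2$ (resp.\ $A_1$). Note first that $N(x)\cap N(u_2)$ for $x\in A_1$ is exactly the set of $y\in A_2$ adjacent to $x$. So the claim is that some vertex is the centre of a star in the bipartite graph $H=G[A_1,A_2]$ carrying all but $2\epsilon n$ of the $p_3(u_1,u_2)=|E(H)|$ edges. By Observation~\ref{obs: forest between Ai and Ai+1}, $H$ is a forest. With the embedding fixed, the vertices of $A_1$ are cyclically ordered around $u_1$ and those of $A_2$ around $u_2$, in compatible orders.

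The plan is to split $A_1$ and $A_2$ into consecutive blocks and apply the Remark after Lemma~\ref{lem: vertex of degree two or long path} to each block. Any consecutive pair of blocks $(A_1',A_2')$ carrying at least $n^{0.2}$ edges of $H$ then yields either a special path, which is excluded, or a degree-two vertex in $A_1'\cup A_2'$. So, outside a set of low-edge blocks of total weight $o(n)$, degree-two vertices appear densely in every part of the order.

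Property (2) enters next. Two degree-two vertices with different neighbourhoods must lie on a common $C_{2k+1}$. Suppose $a\in A_1$ has degree two, with neighbours $u_1,y$, and $b\in A_2$ has degree two, with neighbours $x',u_2$. Any $C_{2k+1}$ containing both must contain the paths $u_1ay$ and $x'bu_2$. Lemma~\ref{lem: Ai Aj disjoint}, Lemma~\ref{lem: t-decomp and 2k+1-t-decomp} and the parity/short-circuit arguments already used in Lemma~\ref{lem: 4-cycle empty} then force $y=x'$'s partner, i.e.\ the cycle is $u_1 a y\ldots$, with $y$ adjacent to $x'$ impossible unless the structure collapses. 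Doing the same for two degree-two vertices $a,a'\in A_1$ with distinct partners $y\neq y'$, any common cycle would contain $u_1$ with both $a,a'$ as neighbours, plus $y,y'$. This gives a $(y,y')$-path closing an odd circuit shorter than $2k+1$ together with $y a u_1 a' y'$, or else contradicts the $(2k-2)$-valid path avoiding $A$. Hence all degree-two vertices in $A_1$ share a single partner $y^*\in A_2$, and similarly all degree-two vertices in $A_2$ share a single partner $x^*\in A_1$. Moreover, at most one of these two types can occur in significant numbers.

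Combining the two steps: every heavy block contains a degree-two vertex, and all of them attach to the same $y^*$ (say). By planarity, together with the forest structure and the cyclic orders, the edges of $H$ incident to these blocks must then essentially all pass through $y^*$. Cross edges not at $y^*$ can only live in the gaps, and a gap would be a block carrying at most $n^{0.2}$ edges. Summing the leftover edges from light blocks, plus $O(1)$ boundary effects, gives at most $2\epsilon n$ edges not incident to $y^*$, so $|N(y^*)\cap N(u_1)|\ge p_3-2\epsilon n$.

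The main obstacle is the property (2) step: rigorously showing that degree-two vertices with different partners cannot share a $C_{2k+1}$. This needs careful circuit-parity bookkeeping against the $(2k-2)$-path $\mathcal P$, as in Lemma~\ref{lem: 6-cycle empty}. A second delicate point is choosing the block size: it must be larger than $n^{0.2}$ edges but small enough that the light blocks total $\le \epsilon n$, e.g.\ about $n^{0.5}$ blocks.
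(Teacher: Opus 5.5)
Your overall strategy can be made to work: the Remark after Lemma~\ref{lem: vertex of degree two or long path} gives a degree-two vertex in every heavy consecutive piece, property~(2) forces a common partner, and planarity forces a star. However, the step that carries the lemma is not supplied, as you acknowledge yourself.

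\textbf{The property (2) step.} For $a\in A_1$ and $b\in A_2$ of degree two, the sentence about ``forcing $y=x'$'s partner'' is not an argument. The conclusion you draw from it, that ``at most one type occurs in significant numbers'', is unproved. It is also too weak for your combining step, since a heavy block whose degree-two vertex lies in $A_2$ need not attach to $y^*$. The missing observation is short. If $b\neq y$, then also $a\neq x'$, because the only $A_2$-neighbour of $a$ is $y$. So $u_1\,a\,y\,u_2\,b\,x'\,u_1$ is a $6$-cycle on six distinct vertices. A common $C_{2k+1}$ must contain all six, because $a$ and $b$ have degree two. But every $C_{2k+1}$ in a $\mathcal{C}^o_{<2k+1}$-free graph is induced, so it cannot contain six vertices spanning a $C_6$. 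Hence degree-two vertices on both sides coexist only as one adjacent pair $a,y$, and these are then the only degree-two vertices in $A_1\cup A_2$.

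In the case $a,a'\in A_1$ with partners $y\neq y'$, the common cycle is $y\,a\,u_1\,a'\,y'$ plus a $(y,y')$-path of length $2k-3$. The short odd circuit, of length $2k-1$, comes from closing that path with $y\,u_2\,y'$. Closing it with $y\,a\,u_1\,a'\,y'$, as you wrote, only gives back the $C_{2k+1}$.

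\textbf{The block step.} If you cut $A_1$ and $A_2$ separately into vertex blocks and pair them, edges between unpaired blocks are counted nowhere. The large star itself is an example, since its $A_1$-ends span many blocks. So the claim that light blocks carry only $o(n)$ edges is unjustified. A set-up that works is the following.
\begin{itemize}
\item Use the $(2k-2)$-path $\mathcal{P}$ to linearise the rotations at $u_1$ and $u_2$. Then $E(H)$ has a non-crossing order in which both the $A_1$-ends and the $A_2$-ends are monotone.
\item Cut $E(H)$ into blocks of $m=\lceil n^{1/2}\rceil$ consecutive edges. Every vertex of $A_1\cup A_2$ has an edge of $H$, so the ends of a block form consecutive sets carrying at least $m\ge n^{0.2}$ paths.
\item So every full block contains a degree-two vertex, and different blocks give different ones. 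There are at least two full blocks since $p_3\ge\epsilon n$, which rules out the exceptional pair. By the exclusions above, all these vertices lie (say) in $A_1$ with one partner $y^*$.
\item By monotonicity, every edge of $H$ lying between the edges at the degree-two vertices of the first and last full blocks ends at $y^*$. Hence fewer than $3m=o(n)$ edges avoid $y^*$.
\end{itemize}

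\textbf{Comparison with the paper.} With these repairs your argument is a genuine alternative to the paper's. The paper finds only one degree-two vertex $x_1$ with partner $y_1$ and separates interior from exterior by the cycle $\mathcal{P}\cup u_1x_1y_1u_2$. It bounds the extreme vertices of $N(u_1)\cap N(y_1)$ using Corollary~\ref{cor: large common neighbor} and the same $C_6$ argument. It then applies the Remark once, to the heavier side, to obtain a second degree-two vertex, which the two property~(2) exclusions rule out. Your covering version avoids the extreme-vertex analysis at the cost of the ordering bookkeeping. Both routes rest on exactly the property~(2) exclusions that your proposal leaves open.
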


\begin{proof}
    Let $n_3(\epsilon, k)$ be a constant larger than the constant in Lemma~\ref{lem: vertex of degree two or long path}.
    Denote by $\mathcal{P}$ the $(u_1,u_2)$-path of length $2k-2$ in $G$.
    By Lemma~\ref{lem: vertex of degree two or long path}, we may assume that there is no special path. Thus there exists a vertex $x_1 \in A_1$ of degree two. Let $y_1 \in A_2$ be the neighbor of $x_1$ other than $u_1$~(see Figure~\ref{fig: lem large star of long path}).

    \begin{figure}
        \centering
        \includegraphics[width=0.3\linewidth]{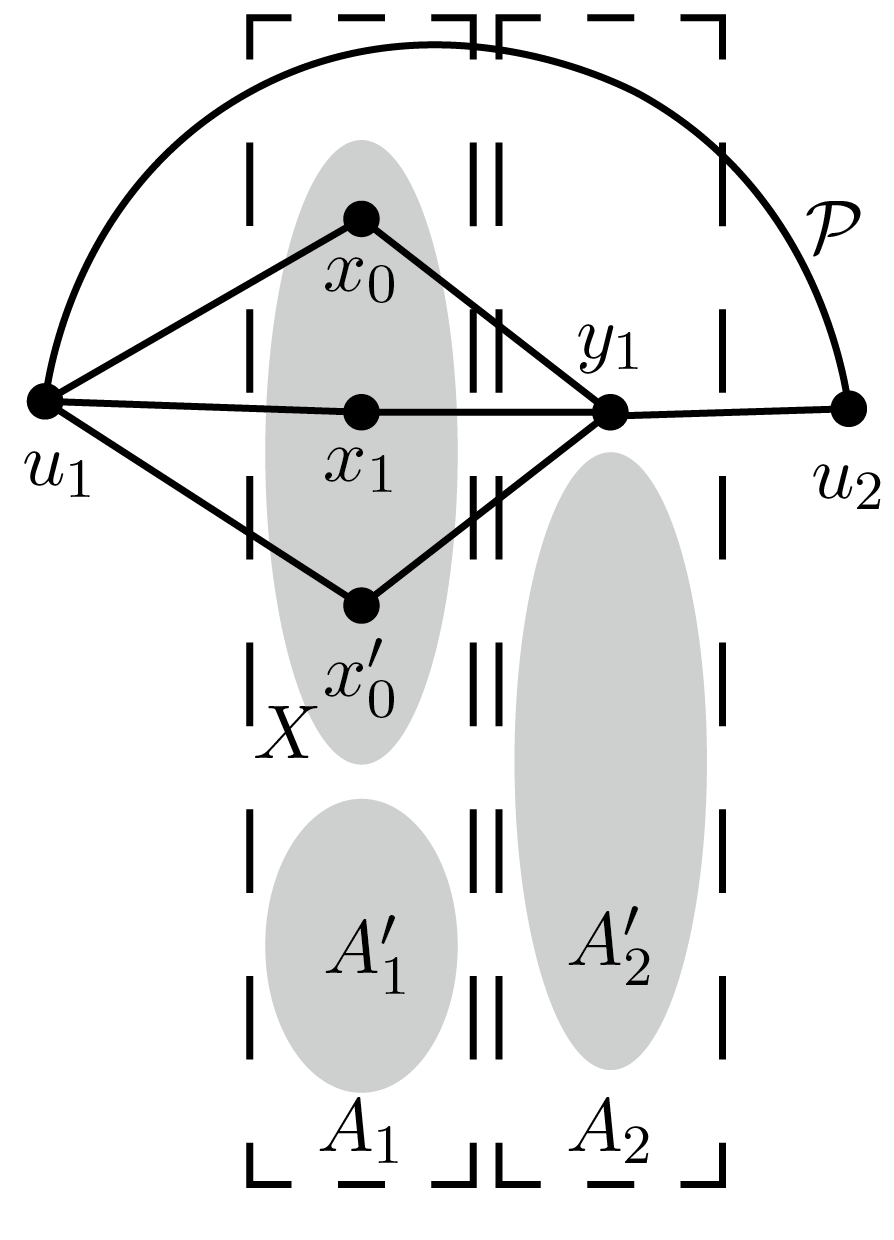}
        \caption{In the proof of Lemma~\ref{lem: large star of long path}.}\label{fig: lem large star of long path}
    \end{figure}

    Let $X = N(u_1) \cap N(y_1)$.
    If $|X| \ge p_3(u_1,u_2) - 2\epsilon n$, then we are done.
    Otherwise, note that $\mathcal{P}$ and $u_1x_1y_1u_2$ form a cycle that divides the plane into two regions.
    We call the bounded region the \textbf{interior} and the unbounded region the \textbf{exterior}.

    Let $x_0$ be the innermost vertex in $X$, that is, there is no vertex of $X$ in the interior of the cycle consisting of $\mathcal{P}$ and $u_1 x_0 y_1 u_2$.
    Let $x_0'$ be the outermost vertex in $X$.
    We first claim that $p_2(x_0,u_2) \le 800k^{k}$.
    Otherwise, there exists a vertex $y_0 \in N(x_0) \cap N(u_2)$ of degree two other than $y_1$ by Corollary~\ref{cor: large common neighbor}.
    By property (2), $x_1$ and $y_0$ must be contained in a common $C_{2k+1}$.
    Such a cycle must contain $u_1,x_1,y_1,u_2,y_0,x_0$, which already form a $C_6$, a contradiction since the cycle must be induced.
    Similarly, $p_2(x_0',u_2) \le 800k^{k}$.
    By planarity, for every vertex in $X$ between $x_0$ and $x_0'$, it cannot be adjacent to other vertices in $A_2$ other than $y_1$.
    Therefore, the number of $(u_1,u_2)$-paths of length three that contain a vertex in $X$ is at most
    \[
        |X| + 800k^{k} \le (p_3(u_1,u_2) - 2\epsilon n + 800k^{k}) < p_3(u_1,u_2) - \epsilon n.
    \]

    Let $A_i'$ be the subset of $A_i\setminus X$ in the exterior of $u_1x_1y_1u_2 \mathcal{P}$ for $i=1,2$.
    Similarly, let $A_i''$ be the subsets of $A_i\setminus X$ in the interior of $u_1x_1y_1u_2 \mathcal{P}$ for $i=1,2$.
    By planarity, a $(u_1,u_2)$-path of length three cannot contain both a vertex in the interior and a vertex in the exterior.

    Let $p_1$~(resp. $p_2$) be the number of $(u_1,u_2)$-paths of length three avoiding $X$ and using only vertices in the interior~(resp.~exterior).
    Then $p_1 + p_2 \ge \epsilon n$.
    Without loss of generality, we may assume $p_2 \ge p_1$ and thus $p_2 \ge \epsilon n/2$.
    The number of $(u_1,u_2)$-paths of length three using a vertex in $A_1'$ and a vertex in $A_2'$ is at least $\epsilon n/2$.
    By Lemma~\ref{lem: vertex of degree two or long path}, there exists a vertex of degree two in $A_1' \cup A_2'$~(here we use the remark after Lemma~\ref{lem: vertex of degree two or long path}).

    If there is a vertex of degree two in $A_1'$, say $x_2$, and if $y_2$ denotes the neighbor of $x_2$ other than $u_1$, then by property (2), $x_1$ and $x_2$ must be contained in a common $C_{2k+1}$.
    Such a cycle must contain $u_1,x_1,y_1,x_2,y_2$, and a $(y_1,y_2)$-path of length $2k-3$.
    Combining $y_1u_2y_2$, we would obtain a shorter odd circuit, a contradiction.
    Hence, the vertex of degree two must be in $A_2'$, say $y_3$.
    Denote the neighbor of $y_3$ apart from $u_2$ by $x_3$.
    Again by property (2), $x_1$ and $y_3$ must be contained in a common $C_{2k+1}$.
    Such a cycle must contain $u_1,x_1,y_1,u_2,y_3,x_3$, which already form a $C_6$, a contradiction since the cycle must be induced.
\end{proof}

% -------------------------------------------------------------
\subsection{Estimate Deltas}\label{subsec: prep Delta 2 Delta 3}

Let $z$ be a vertex of minimum degree in $G$.
By property (1), $2 \le d(z) \le 5$.
There exists $z_1,z_2 \in N(z)$ such that the number of $C_{2k+1}$ containing $z_1zz_2$ is at least $\frac{{{((n-2k-2)/k)}^{k-1}}}{\binom{5}{2}}$.
Thus $\Delta_{2k-1} \ge \frac{{{((n-2k-2)/k)}^{k-1}}}{\binom{5}{2}} = \Omega(n^{k-1})$.
Our first goal is to prove $\Delta_m = \Omega(n^{\lfloor m/2 \rfloor})$ for each $m$.

\begin{lemma}\label{lem: order of deltas}
    Let $\gamma$ and $t$ be fixed constants with $1 \le t \le k-2$.
    Let $G$ be a planar graph on $n$ vertices forbidding $\mathcal{C}^o_{<2k+1}$ with property (1).
    Suppose there exists a $2t$-valid pair $(u_1,u_2)$ with $p_{2k-2t+1}(u_1,u_2) \ge \gamma n^{k-t}$, and let $\{A_1,A_2,\ldots,A_{2k-2t}\}$ be the $(2k-2t+1)$-path-decomposition of $(u_1,u_2)$.
    Then either there exists a vertex $w_1 \in A_2$ such that $p_2(u_1,w_1) \ge \gamma n/2$ and $p_{2k-2t-1}(w_1,u_2) \ge \gamma n^{k-t-1}$, or there exists a vertex $w_2 \in A_{3}$ such that $p_3(u_1, w_2) \ge \gamma n/2$ and $p_{2k-2t-2}(w_2, u_2) \ge \gamma n^{k-t-1}$.
\end{lemma}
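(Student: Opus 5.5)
Throughout, every $(u_1,u_2)$-path of length $2k-2t+1$ meets each $A_i$ in exactly one vertex, by Lemma~\ref{lem: Ai Aj disjoint}. The plan is a pigeonhole argument on the middle vertex of such a path. Cut each path at its vertex in $A_2$: this writes
\[
    p_{2k-2t+1}(u_1,u_2) \le \sum_{w\in A_2} p_2(u_1,w)\, p_{2k-2t-1}(w,u_2).
\]
Before using this, I will check that each such $(w,u_2)$ with $w\in A_2$ is a valid pair of the right type. Concatenating the $(u_1,w)$-part of one path with the given $(u_2,u_1)$-path of length $2t$ shows that $(w,u_2)$ is $(2t+2)$-valid. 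Hence $p_{2k-2t-1}(w,u_2)\le |A|^{k-t-1}\le n^{k-t-1}$ by Lemma~\ref{lem: size lower bound of A} applied with $2k+1-(2t+2)=2k-2t-1$. Similarly, $(u_1,w)$ is $(2k-1)$-valid, so $p_2(u_1,w)\le n$.

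The first attempt is the following split. If some $w\in A_2$ has both $p_2(u_1,w)\ge \gamma n/2$ and $p_{2k-2t-1}(w,u_2)\ge\gamma n^{k-t-1}$, we are done. Otherwise I split $A_2$ into $S=\{w: p_2(u_1,w)<\gamma n/2\}$ and $T=\{w: p_{2k-2t-1}(w,u_2)<\gamma n^{k-t-1}\}$.

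For $T$, I use the forest structure. The number of paths $u_1 a w$ with $a\in A_1$ and $w\in A_2$ is at most the number of edges between $A_1$ and $A_2$. This is at most $|A_1|+|A_2|\le n$ by Observation~\ref{obs: forest between Ai and Ai+1}. Hence the contribution of $T$ is less than $n\cdot\gamma n^{k-t-1}=\gamma n^{k-t}$.

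The contribution of $S$ is the problematic term. Here the trivial bound $|S|\cdot(\gamma n/2)\cdot n^{k-t-1}$ is too large, so I will instead group the paths by their vertex $w'\in A_3$. For fixed $w'$, the number of paths through $w'$ is
\[
    p_3(u_1,w')\,p_{2k-2t-2}(w',u_2)=\Big(\sum_{w\in A_2\cap N(w')}p_2(u_1,w)\Big)p_{2k-2t-2}(w',u_2).
\]
By an analogous validity check, $p_{2k-2t-2}(w',u_2)\le n^{k-t-1}$ by Lemma~\ref{lem: size lower bound of A}. Now apply the same dichotomy at $A_3$. If some $w'\in A_3$ has $p_3(u_1,w')\ge\gamma n/2$ and $p_{2k-2t-2}(w',u_2)\ge\gamma n^{k-t-1}$, the second alternative holds. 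Otherwise the paths through $A_3$-vertices with small $p_{2k-2t-2}$ number at most $\sum_{w'}p_3(u_1,w')\cdot\gamma n^{k-t-1}$.

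The key estimate I need is $\sum_{w'\in A_3}p_3(u_1,w')\le$ (roughly) $n$. This requires the edges between $A_2$ and $A_3$ to form a forest, weighted by $p_2(u_1,\cdot)$. Lemma~\ref{lem: bi tree} gives $\sum_{w'}p_3(u_1,w')\le w(A_2)\Delta+\ldots$, which is not obviously linear. So the remaining paths should be bounded via the $A_2$-side small values instead: paths with $w\in S$ and $w'$ having $p_3(u_1,w')<\gamma n/2$.

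The main obstacle is making the constants add up to strictly less than $\gamma n^{k-t}$ under the hypothesis. The $T$-part gives less than $\gamma n^{k-t}$ by itself, so the halving in $\gamma n/2$ must be absorbed. I expect to need the sharper count from planarity and the forest structure: the number of $(u_1,w')$ length-$3$ paths summed over $w'$ equals the weighted edge count between $A_2$ and $A_3$, and this is at most $2n$ because each tree edge is counted along with leaf-peeling as in Lemma~\ref{lem: bi tree}. Combined with the bound $n^{k-t-1}$ in the middle factor, the $S$-contribution is below $\tfrac{\gamma}{2}n\cdot n^{k-t-1}\cdot$const. I would first try to make this work with the exact weights, and if the constants fail, weaken the thresholds accordingly.
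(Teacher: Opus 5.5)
Your argument does not close, and the one concrete estimate you offer to close it is false. The sum $\sum_{w'\in A_3}p_3(u_1,w')=\sum_{ww'\in E(A_2,A_3)}p_2(u_1,w)$ is not $O(n)$. Take a vertex $w\in A_2$ that is the centre of a star into $A_1$ and of a star into $A_3$, each with linearly many leaves. This is compatible with planarity and Observation~\ref{obs: forest between Ai and Ai+1}, and it even occurs in the extremal construction of Theorem~\ref{thm: main}; it makes the sum of order $n^2$. Lemma~\ref{lem: bi tree} with weight $1$ on $A_3$ only gives $e_1+|A_3|\max_w p_2(u_1,w)$, where $e_i$ denotes the number of edges between $A_{2i-1}$ and $A_{2i}$.

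Independently, the bookkeeping cannot work as set up. Bounding the $T$-part by $\gamma n^{k-t-1}\cdot n$ via $e_1\le n$ already spends the entire budget $\gamma n^{k-t}$. So no positive bound on the $S$-part can produce a contradiction, and ``weakening the thresholds'' proves a different statement.

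The missing idea is that the two parts must share one budget, $e_1+\dots+e_{k-t}\le|A|\le n-2$, and that Lemma~\ref{lem: bi tree} must be applied with both weights capped. Let $f(x)=|N(x)\cap A_1|$ for $x\in A_2$, and let $g(y)$ be the number of $(y,u_2)$-paths through $A_4,\dots,A_{2k-2t}$ for $y\in A_3$. Then $\sum f=e_1$ and $\sum g\le\prod_{i\ge 2}e_i\le (n-e_1)n^{k-t-2}$. Put $T=\{x:\sum_{y\in N(x)\cap A_3}g(y)<\gamma n^{k-t-1}\}$, $S=A_2\setminus T$ and $B=\{y:g(y)\ge\gamma n^{k-t-1}\}$. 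If both alternatives fail, then:
\begin{itemize}
\item $f<\gamma n/2$ on $S$;
\item $\sum_{x\in N(y)\cap A_2}f(x)<\gamma n/2$ for $y\in B$;
\item $g<\gamma n^{k-t-1}$ on $A_3\setminus B$.
\end{itemize}
Split the paths into those through $T$, those through $S$ and $B$, and those through $S$ and $A_3\setminus B$; bound the last class via Lemma~\ref{lem: bi tree} on the forest between $S$ and $A_3\setminus B$. This gives $p_{2k-2t+1}(u_1,u_2)\le\gamma n^{k-t-1}e_1+\frac{\gamma n}{2}\sum g\le\frac{\gamma}{2}n^{k-t-1}(n+e_1)<\gamma n^{k-t}$, a contradiction.

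Repaired this way, your route differs from the paper's and uses planarity only through Observation~\ref{obs: forest between Ai and Ai+1}. The paper instead prunes the hypergraph on $E_1,\dots,E_{k-t}$ to minimum degree $\gamma n^{k-t-1}$ and picks a surviving $a_3a_4\in E_2$, so that $|E_1(a_3a_4)|\ge\gamma n$. If these edges have at most two $A_2$-endpoints, it pigeonholes over them to find $w_1$. With three endpoints, it uses planarity to force every surviving path through the middle edge to pass through $a_3$, giving $w_2=a_3$.
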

\begin{proof}

    Let $E_i$ be the set of edges between $A_{2i-1}$ and $A_{2i}$, $i=1,2,\ldots,k-t$.
    Recall that $E_i$ is a forest by Observation~\ref{obs: forest between Ai and Ai+1} and thus $|E_i| \le |A_{2i-1}| + |A_{2i}|$.
    Let $\mathcal{H}$ be an auxiliary $(k-t)$-partite $(k-t)$-uniform hypergraph with parts ${\{E_i\}}_{1 \le i \le k-t}$.
    For a tuple $(e_1,\ldots,e_{k-t})$ where $e_i \in E_i$, $i=1,2,\ldots, k-t$, we have $(e_1,\ldots,e_{k-t}) \in E(\mathcal{H})$ if and only if $e_1,\ldots,e_{k-t}$ occur along a $(u_1,u_2)$-path of length $2k-2t+1$.

    Then $e(\mathcal{H}) = p_{2k-2t+1}(u_1,u_2) \ge \gamma n^{k-t}$.
    Also, $v(\mathcal{H}) \le \sum_{i=1}^{k-t}|E_i| \le \sum_{i=1}^{2k-2t} |A_i| \le n$.

    Removing the vertices in $\mathcal{H}$ with degree at most $\gamma n^{k-t-1}$ gives a subgraph $\mathcal{H}'$ of $\mathcal{H}$ with minimum degree at least $\gamma n^{k-t-1}$.
    During this process the average degree increases, so the resulting graph $\mathcal{H}'$ is non-empty.

    Select $a_3a_4 \in V(\mathcal{H}') \cap E_2$ arbitrarily.
    Define
    \[
        E_1(a_3a_4) = \{e\in E_1 \mid \exists \mathcal{E} \in E(\mathcal{H}), e,a_3a_4 \in \mathcal{E}\},
    \]
    Thus $E_1(a_3a_4)$ is the set of neighbors of $a_3a_4$ in $V(\mathcal{H}')\cap E_1$.
    The degree of $a_3a_4$ in $\mathcal{H}'$ satisfies
    \[
        \gamma n^{k-t-1} \le d_{\mathcal{H}'}(a_3a_4) \le |E_1(a_3a_4)| \cdot \prod_{i=3}^{k-t} |E_i| \le  |E_1(a_3a_4)| n^{k-t-2}.
    \]
    Thus $|E_1(a_3a_4)| \ge \gamma n$, that is, $p_3(a_3,u_1) \ge \gamma n$.
    Let
    \[
        Y = \{y \in A_2 \mid \exists x \in A_1, xy \in E_1(a_3a_4)\}.
    \]

    \textbf{Case 1:} $|Y| \ge 3$. Choose three vertices $y_1,y_2,y_3 \in Y$. Assume $y_2$ lies between $y_1$ and $y_3$ in the counter-clockwise order around $a_3$, and let $x_2y_2 \in E_1(a_3a_4)$.
    Define
    \[
        \mathcal{E}(x_2y_2) = \{e \in E(\mathcal{H}') \mid x_2y_2 \in e\}.
    \]
    By the minimum degree condition of $\mathcal{H}'$, we have $|\mathcal{E}(x_2y_2)| \ge \gamma n^{k-t-1}$.
    For every hyperedge $e \in \mathcal{E}(x_2y_2)$,
    denote $z = (e \cap E_2)\cap A_3$.
    By the existence of $y_1,y_3$ and planarity, $z$ must be $a_3$.
    See the left figure in Figure~\ref{fig: lem rough p5}.
    This yields $p_{2k-2t-2}(a_3,u_2) \ge \gamma n^{k-t-1}$.
    Combining with $p_3(a_3,u_1) \ge |E_1(a_3a_4)| \ge \gamma n$, we are done by letting $w_2 = a_3$.

    \begin{figure}
        \centering
        \includegraphics[width=0.9\textwidth]{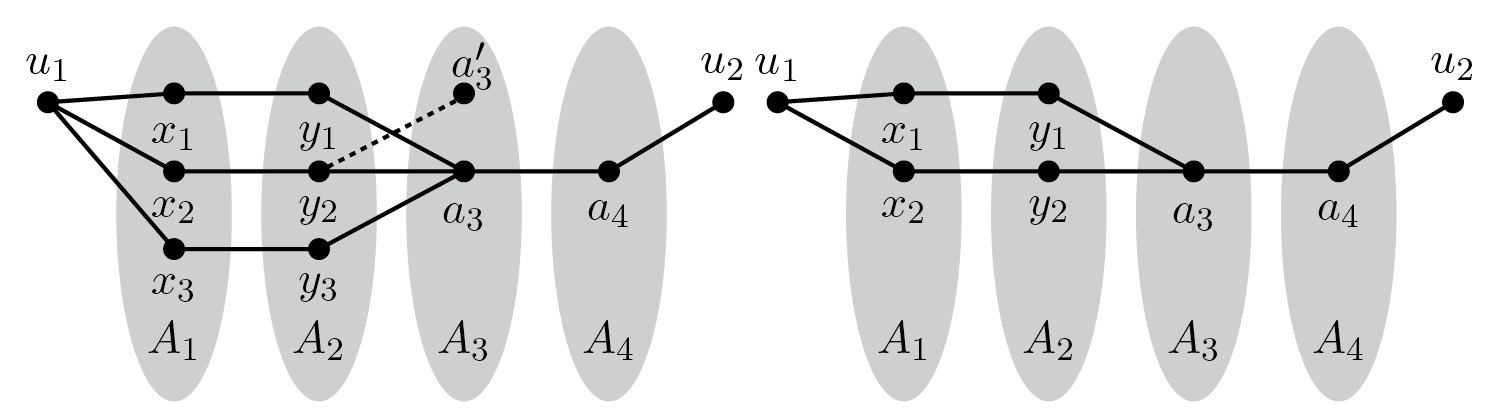}
        \caption{The left figure is for Case 1, and the right figure is for Case 2 in the proof of Lemma~\ref{lem: order of deltas}.}\label{fig: lem rough p5}
    \end{figure}

    \textbf{Case 2:} $|Y| \le 2$.
    Then there are at most two vertices in $Y$, say $y_1,y_2$.
    Since $|E_1(a_3a_4)| = p_2(u_1,y_1) + p_2(u_1,y_2) \ge \gamma n$, we may assume without loss of generality that $p_2(u_1,y_1) \ge \gamma n/2$.
    Also, $p_{2k-2t-1}(y_1,u_2) \ge \gamma n^{k-t-1}$ by $x_1y_1 \in \mathcal{H}'$ for some $x_1$ and the degree condition.
    This completes the proof by taking $w_1 = y_1$.
\end{proof}

\begin{lemma}\label{lem: order of deltas 2}
    Let $\gamma$ and $t$ be fixed constants with $1 \le t \le k-2$.
    Let $G$ be a planar graph on $n$ vertices forbidding $\mathcal{C}^o_{<2k+1}$ with property (1).
    Suppose there exists a $(2t+1)$-valid pair $(u_1,u_2)$ with $p_{2k-2t}(u_1,u_2) \ge \gamma n^{k-t}$, and let $\{A_1,A_2,\ldots,A_{2k-2t-1}\}$ be the $(2k-2t)$-path-decomposition of $(u_1,u_2)$.
    Then there exists a vertex $w_1 \in A_2$ such that $p_2(u_1,w_1) \ge \gamma n/2$ and $p_{2k-2t-2}(w_1,u_2) \ge \gamma n^{k-t-1}$.
\end{lemma}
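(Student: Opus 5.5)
We follow the proof of Lemma~\ref{lem: order of deltas}, adapting it to the even length $2k-2t$. Let $E_i$ be the set of edges between $A_{2i-1}$ and $A_{2i}$ for $i=1,\ldots,k-t-1$. By Observation~\ref{obs: forest between Ai and Ai+1}, each $E_i$ is a forest, so $|E_i|\le |A_{2i-1}|+|A_{2i}|$. The last class $A_{2k-2t-1}$ is left unpaired. We build an auxiliary $(k-t)$-partite $(k-t)$-uniform hypergraph $\mathcal{H}$ with parts $E_1,\ldots,E_{k-t-1}$ and $A_{2k-2t-1}$. A tuple is a hyperedge exactly when its members occur along a common $(u_1,u_2)$-path of length $2k-2t$. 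By Lemma~\ref{lem: Ai Aj disjoint}, such a path is determined by its hyperedge, so $e(\mathcal{H})=p_{2k-2t}(u_1,u_2)\ge\gamma n^{k-t}$. Also $v(\mathcal{H})\le n$. We repeatedly delete vertices of degree at most $\gamma n^{k-t-1}$. Each deletion removes at most $\gamma n^{k-t-1}$ edges and there are at most $n$ vertices, so the process removes at most $\gamma n^{k-t}$ edges. Hence it ends with a nonempty subhypergraph $\mathcal{H}'$ of minimum degree greater than $\gamma n^{k-t-1}$. (If the initial edge count is exactly the threshold, we use strict inequalities, or accept a harmless change of constant as the paper does.)

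The key difference from the odd case is that the first vertex after $u_1$ together with the second one now form an edge class $E_1$, while the parity puts the third vertex in $E_2$. So we look directly at the second position. Pick any $x_1y_1\in V(\mathcal{H}')\cap E_1$, with $x_1\in A_1$ and $y_1\in A_2$. Its degree in $\mathcal{H}'$ exceeds $\gamma n^{k-t-1}$, and every hyperedge through it extends $u_1x_1y_1$ by a $(y_1,u_2)$-path of length $2k-2t-2$. Therefore $p_{2k-2t-2}(y_1,u_2)\ge\gamma n^{k-t-1}$.

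It remains to get $p_2(u_1,w_1)\ge\gamma n/2$ for some such $w_1$, and this is the main step. Take $e\in V(\mathcal{H}')\cap E_2$ (or, if $k-t-1=1$, a vertex $a\in A_{2k-2t-1}$ of $\mathcal{H}'$), with $e=a_3a_4$. Its degree is at most $|E_1(e)|\cdot\prod_{i\ge 3}|E_i|\cdot|A_{2k-2t-1}|\le |E_1(e)|\,n^{k-t-2}$. So $|E_1(e)|\ge\gamma n$, where $E_1(e)$ is the set of $E_1$-neighbours of $e$ in $\mathcal{H}'$. Let $Y$ be the set of endpoints in $A_2$ of the edges in $E_1(e)$; every $y\in Y$ is adjacent to $a_3$.

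If $|Y|\le 2$, then some $y\in Y$ has $p_2(u_1,y)\ge\gamma n/2$. The previous paragraph, applied to an edge $xy\in E_1(e)\subseteq V(\mathcal{H}')$, gives $p_{2k-2t-2}(y,u_2)\ge\gamma n^{k-t-1}$, and we set $w_1=y$.

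If $|Y|\ge 3$, we argue by planarity as in Case~1 of Lemma~\ref{lem: order of deltas}. Take $y_1,y_2,y_3\in Y$ with $y_2$ between the others around $a_3$. The cycles through $u_1,y_1,a_3$ and $u_1,y_3,a_3$ enclose $y_2$. Using also the existence of the odd $(u_1,u_2)$-path of length $2t+1$, which avoids $A$ by Lemma~\ref{lem: t-decomp and 2k+1-t-decomp}, every path from $y_2$ towards $u_2$ must pass through $a_3$. Then all hyperedges through an edge $x_2y_2\in V(\mathcal{H}')$ use $a_3$, which gives $p_{2k-2t-3}(a_3,u_2)\ge\gamma n^{k-t-1}$. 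Together with $p_3(u_1,a_3)\ge\gamma n$, this yields a $(u_1,u_2)$-path configuration from which we must still extract a vertex $w_1\in A_2$ with large $p_2(u_1,w_1)$.

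This extraction is where I expect the main obstacle. I would try to show that in this case, planarity together with the forest structure of $E_1$ forces most of the $\gamma n$ edges of $E_1(e)$ to share a common endpoint in $A_2$, namely the vertex separating $u_1$ from $a_3$. Failing that, I would try to show $|Y|\ge3$ cannot occur, because the enclosed $y_2$ would create a short odd circuit with the $(2t+1)$-path. Either outcome reduces to the $|Y|\le 2$ case and finishes the proof.
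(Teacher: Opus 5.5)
Your setup matches the paper: the hypergraph with parts $E_1,\ldots,E_{k-t-1},A_{2k-2t-1}$, the pruning to $\mathcal{H}'$, the bound $|E_1(e)|\ge\gamma n$, and the case $|Y|\le 2$ are all fine. The gap is the case $|Y|\ge 3$, which you leave open. You correctly derive that all hyperedges through $x_2y_2$ pass through $a_3$, so $p_{2k-2t-3}(a_3,u_2)\ge\gamma n^{k-t-1}$. You then try to extract some $w_1\in A_2$ from this configuration, via either a common-endpoint argument or a short-odd-circuit argument. Neither is carried out, and neither is what rules this case out; no odd-circuit argument is needed.

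The missing observation is that the inequality you derived is already contradictory. Every $(u_1,u_2)$-path of length $2k-2t$ through $x_2y_2a_3$ continues as $a_3v_4v_5\cdots v_{2k-2t-1}u_2$ with $v_j\in A_j$, by Lemma~\ref{lem: Ai Aj disjoint}. Such a continuation is determined by its edges $v_4v_5, v_6v_7,\ldots,v_{2k-2t-2}v_{2k-2t-1}$. Let $E_i'$ be the edge set between $A_{2i}$ and $A_{2i+1}$. By Observation~\ref{obs: forest between Ai and Ai+1} each $E_i'$ is a forest, so $|E_i'|\le n$. Hence the number of continuations is at most $\prod_{i=2}^{k-t-1}|E_i'|\le n^{k-t-2}$. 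When $t=k-2$ this product is empty and the count is $1$, since $a_3$ is then adjacent to $u_2$. The point is parity: an odd-length remainder starting at $a_3\in A_3$ costs a full factor of $n$. So the degree of $x_2y_2$ in $\mathcal{H}'$ is at most $n^{k-t-2}<\gamma n^{k-t-1}$ once $n>1/\gamma$, contradicting the minimum-degree condition. Therefore $|Y|\ge 3$ never occurs, and your $|Y|\le 2$ argument completes the proof. This is exactly the paper's proof: it declares Case 1 of Lemma~\ref{lem: order of deltas} impossible in the even setting by this count, and concludes from Case 2.
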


\begin{proof}
    The proof is similar to that of Lemma~\ref{lem: order of deltas}.
    We replace $E_{k-t}$ by $A_{2k-2t-1}$ in the proof of Lemma~\ref{lem: order of deltas}.
    Then Case 1 is impossible.
    Let $a_3$, $x_2$ and $y_2$ be the vertices appearing in Case 1 of the proof of Lemma~\ref{lem: order of deltas}~(see the left figure in Figure~\ref{fig: lem rough p5}).

    By the minimum degree condition of $\mathcal{H}'$, the degree of $x_2y_2$ is at least $\gamma n^{k-t-1}$.
    Hence the number of $(u_1,u_2)$-paths of length $2k-2t$ using $x_2y_2$ is at least $\gamma n^{k-t-1}$.
    By planarity, all such paths must contain $a_3$.

    Let $E_i'$ be the edge set between $A_{2i}$ and $A_{2i+1}$, $i=1,2,\ldots, k-t-1$.
    By Observation~\ref{obs: forest between Ai and Ai+1}, $E_i'$ is a forest and thus $|E_i'| \le |A_{2i}| + |A_{2i+1}| \le n$.
    The number of $(u_1,u_2)$-paths of length $2k-2t$ using $x_2,y_2,a_3$ is at most $\prod_{i=2}^{k-t-1} |E_i'| \le n^{k-t-2}$, a contradiction.

    The result therefore follows from Case 2 in the proof of Lemma~\ref{lem: order of deltas}.
\end{proof}

Recall that $\Delta_{2k-1} \ge \frac{((n-2k-2)/k)^{k-1}}{\binom{5}{2}} \ge \frac{1}{11k^{k-1}} n^{k-1}$ when $n$ is large.
Applying Lemmas~\ref{lem: order of deltas} and~\ref{lem: order of deltas 2}, we obtain $\Delta_2 \ge \frac{1}{22k^{k-1}} n$.
Let $u_1,u_2$ be a $(2k-2)$-valid pair with $p_{2}(u_1,u_2)  = \Delta_2$.
By Corollary~\ref{cor: large common neighbor}, there exists a vertex $w \in N(u_1) \cap N(u_2)$ of degree two.
By property (1), the number of $(u_1,u_2)$-paths of length $2k-1$ is at least $\left(\frac{n-2k-2}{k}\right)^{k-1} \ge \gamma_0 n^{k-1}$, where $\gamma_0 = \frac{1}{2k^{k-1}}$.
Iteratively applying Lemmas~\ref{lem: order of deltas} and~\ref{lem: order of deltas 2}, starting from $u_1,u_2$, shows that there exists a sequence of vertices $v_1,v_2,\ldots,v_{k}$ such that $p_2(v_i,v_{i+1}) \ge \gamma_0 n/2$ for $i=1,2,\ldots,k-1$ and $p_{3}(v_1,v_k) \ge \gamma_0 n/2$.
Let $\gamma = \gamma_0^k / 2^k$. Then

\begin{equation}\label{eq: lb Delta m}
\begin{aligned}
    &\Delta_m \ge \gamma n^{ \lfloor m/2 \rfloor}, m=2,3,\ldots,2k-1.
\end{aligned}
\end{equation}
By Lemma~\ref{lem: size lower bound of A}, we have the upper bound
\begin{equation}\label{eq: ub Delta m}
\begin{aligned}
    &\Delta_m \le n^{ \lceil (m-1)/2 \rceil} = n^{ \lfloor m/2 \rfloor} , m=2,3,\ldots,2k-1.
\end{aligned}
\end{equation}
As we have shown that $\Delta_m = \Theta(n^{\lfloor m/2 \rfloor})$ for $m=2,3,\ldots,2k-1$, we next estimate the relationship between $\Delta_2,\Delta_3$ and the other $\Delta$ terms.

\begin{lemma}\label{lem: other Delta}
    Let $G$ be a planar graph on $n$ vertices forbidding $\mathcal{C}^o_{<2k+1}$ with property (1), (2) and (3).
    Let $4 \le m \le 2k-3$ be fixed, and let $\epsilon$ be fixed and small. Then there exists $n_4 = n_4(\epsilon, k)$ such that we have
    \begin{equation*}
        \Delta_m \le \left\{ \begin{array}{ll}
            \Delta_2^{m/2} + 2m \epsilon n^{m/2}, & \text{if $m$ is even};\\
            \Delta_2^{(m-3)/2} \Delta_3 + 2m \epsilon n^{(m-1)/2}, & \text{if $m$ is odd}.
        \end{array} \right.
    \end{equation*}
    for $n \ge n_4$.
\end{lemma}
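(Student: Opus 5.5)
Fix a $(2k+1-m)$-valid pair $(u,w)$ attaining $\Delta_m$, and let $\{A_1,\ldots,A_{m-1}\}$ be its $m$-path-decomposition. The plan is to peel the $(u,w)$-paths of length $m$ into consecutive segments of length two (and one segment of length three when $m$ is odd). Each segment's contribution will be bounded by $\Delta_2$ (resp.\ $\Delta_3$), up to a loss of at most $2\epsilon n^{\lfloor m/2\rfloor}$ per step. We induct on $m$ by removing one length-two segment at a time. For $m$ even the base case is $\Delta_2$ itself; for $m$ odd the base case is $\Delta_3$.

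The key observation is validity. Suppose $v\in A_2$. Then $(u,v)$ is joined by a path of length $2$. The pair $(u,v)$ is also joined by a path of length $2k-1$: the path $u$--$w$ of length $2k+1-m$ followed by a $(w,v)$-path of length $m-2$. By Lemma~\ref{lem: t-decomp and 2k+1-t-decomp} this concatenation is a genuine path. Hence $(u,v)$ is $(2k-1)$-valid, so $p_2(u,v)\le \Delta_2$. In the same way, $(v,w)$ is $(2k+3-m)$-valid, so $p_{m-2}(v,w)\le \Delta_{m-2}$.

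Now split $v\in A_2$ according to whether $p_2(u,v)$ is large, i.e.\ at least $\epsilon n$.
\begin{enumerate}
\item Small $v$ contribute at most $\epsilon n\cdot\sum_v p_{m-2}(v,w)$. The inner sum counts $(u,w)$-paths of length $m-1$ through $A_2$ with the first edge dropped, so by the forest structure (Observation~\ref{obs: forest between Ai and Ai+1}) and the argument of Lemma~\ref{lem: size lower bound of A} it is $O(n^{\lfloor m/2\rfloor-1})$. The small $v$ therefore contribute $O(\epsilon n^{\lfloor m/2\rfloor})$.
\item For large $v$, Corollary~\ref{cor: large common neighbor} applies once $\epsilon n\ge 800k^k$, giving a degree-two vertex in $N(u)\cap N(v)$. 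By property (1), $p_{2k-1}(u,v)=\Omega(n^{k-1})$. Property (2) says degree-two vertices with different neighbourhoods lie on a common $C_{2k+1}$. Two large vertices $v,v'\in A_2$ would carry degree-two vertices $z,z'$ with neighbourhoods $\{u,v\}$ and $\{u,v'\}$. A $C_{2k+1}$ through both $z$ and $z'$ contains $v\,z\,u\,z'\,v'$ and closes with a $(v,v')$-path of length $2k-3$. Together with $v\,a\,w'$\ldots\ (a path through $A_3$) this would create a short odd circuit, as in the proof of Lemma~\ref{lem: large star of long path}. So I expect to show there are at most $O(1)$ large $v$. Only $O(1)$ of them survive.
\end{enumerate}

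This gives
\[
\Delta_m\le \sum_{v\ \text{large}} p_2(u,v)\,p_{m-2}(v,w)+O(\epsilon n^{\lfloor m/2\rfloor}).
\]
With a single dominant large $v$, the first term is at most $\Delta_2\Delta_{m-2}$. By induction and the bounds $\Delta_2\le n$ and $\Delta_{m-2}\le n^{\lfloor m/2\rfloor-1}$ from \eqref{eq: ub Delta m}, this yields the claimed form with error $2m\epsilon n^{\lfloor m/2\rfloor}$.

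The main obstacle is the case of several large $v$, because the sum of their $p_2(u,v)$ could exceed $\Delta_2$. I would try to show that all large $v$ share essentially the same common neighbourhood with $u$ up to $\epsilon n$. If that fails, I would merge them using the forest/planarity argument of Lemma~\ref{lem: bi tree}, applied to the weighted bipartite forest between $A_1$ and $A_2$ with weights $p_{m-2}(v,w)$. That lemma gives $w(H)\le w(X)\Delta_Y+w(Y)\Delta_X-\Delta_X\Delta_Y$, and with $\Delta_Y$ the largest $p_{m-2}$ and $w(X)$ effectively at most $\Delta_2$, this yields $\Delta_2\Delta_{m-2}$ plus a term absorbed in the $\epsilon$ error. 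For odd $m$ the recursion stops at $\Delta_3$ instead of $\Delta_2$, which gives $\Delta_2^{(m-3)/2}\Delta_3$.
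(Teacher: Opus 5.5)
\textbf{The decisive step is missing.} For even $m$ your skeleton is right: vertices $v\in A_2$ with $p_2(u,v)<\epsilon n$ contribute at most $\epsilon n\cdot n^{m/2-1}$. Everything then hinges on showing that at most one $v\in A_2$ is heavy on \emph{both} sides.

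What you propose instead, ``at most $O(1)$ large $v$'', is trivial, since $\sum_v p_2(u,v)\le |E_1|\le n$ already gives at most $1/\epsilon$ of them. It is also not what is needed. Moreover, the contradiction you sketch does not exist. A $C_{2k+1}$ through $z,z'$ gives a $(v,v')$-path of odd length $2k-3$. The even $(v,v')$-walks you have are of length $4$ (through $u$) and $2m-4\ge 4$ (through $w$), so every odd circuit you can build has length at least $2k+1$. The only way out would be for $v,v'$ to have a common neighbour in $A_3$, and nothing forces that. The argument of Lemma~\ref{lem: large star of long path} works only because there both vertices are adjacent to $u_2$, which is the case $m=3$.

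Your fallback via Lemma~\ref{lem: bi tree} is circular. With weights $1$ on $A_1$ and $p_{m-2}(\cdot,w)$ on $A_2$ it gives roughly $|A_1|\,\Delta_{m-2}$, and $|A_1|$ is bounded by $\Delta_2$ only if the uniqueness claim already holds.

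The paper proves exactly that claim. Fix a heavy $x_0\in A_2$ and a degree-two vertex $x_1\in N(u)\cap N(x_0)$. Then for every $x\in A_2\setminus\{x_0\}$, either $p_2(u,x)\le 800k^k$ or $p_{m-2}(x,w)\le \epsilon n^{t-1}$. The argument runs as follows.
\begin{itemize}
\item If both fail, repeated use of Lemma~\ref{lem: order of deltas} and Corollary~\ref{cor: large common neighbor} builds a path $y_1y_2\cdots$ with $y_2=x$, $y_j\in A_j$, and a degree-two vertex at every odd level. This is where the largeness of $p_{m-2}(x,w)$ is used, and your plan never uses it.
\item Property (2) puts $x_1$ and each $y_{2i-1}$ on a common $C_{2k+1}$.
\item The parity analysis of Claim~\ref{claim: unite two paths in a common cycle} then propagates a $(x_0,y_{2i})$-path of length $2k-1-2i$ level by level.
\item At the far end this yields a $(x_0,w)$-path that closes a short odd circuit with the $(x_0,w)$-path of length $m-2$.
\end{itemize}
The contradiction only appears at $w$, so a one-step argument at the $u$ end cannot reach it.

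\textbf{The odd case is not handled.} Your light-vertex bound uses $\sum_{v\in A_2}p_{m-2}(v,w)=O(n^{\lfloor m/2\rfloor-1})$, which holds only for even $m$. For $m=2t+1$ the paths $A_2\to\cdots\to A_{2t}\to w$ pair into $t-1$ forests plus one free layer. So the sum is only $O(n^t)$, and your bound becomes $\epsilon n^{t+1}$, larger than the target order $n^t$.

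This is not merely a loose estimate. Take the extremal construction with the tree a path, $k\ge 4$, $m=5$ and $(u,w)=(u_1,u_{k-1})$. This pair attains $\Delta_5$ if $x_{k-1},x_k$ are the two largest classes. Every $v\in A_2$ has $p_2(u,v)\le 2$, yet these light vertices carry all $x_kx_{k-1}$ paths. Hence two things are needed:
\begin{itemize}
\item The orientation must be chosen; the paper uses Lemma~\ref{lem: order of deltas} to place a heavy $x_0$ at the near end.
\item The light part must be controlled by a second dichotomy at $A_3$: for $y\in A_3$, either $p_3(u,y)\le n^{0.2}$ or $p_{m-3}(y,w)\le\epsilon n^{t-1}$. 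This requires Lemma~\ref{lem: vertex of degree two or long path} and property (3), because the chain may contain a special path instead of a degree-two vertex. It is followed by Lemma~\ref{lem: bi tree} on the weighted $A_2$--$A_3$ forest.
\end{itemize}
Property (3) appears nowhere in your plan, although the statement assumes it precisely for this step.
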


\begin{proof}

    We start with two claims.

    \begin{claim}\label{claim: unite two paths in a common cycle}
        Let $(u_1,u_2)$ be a $(2k + 1 - m)$-valid pair and let $\{A_1,\ldots,A_{m-1}\}$ be the $m$-path-decomposition of $(u_1,u_2)$.
        Suppose that $u_1x_1x_2$ is a path where $x_1 \in A_1$ and $x_2 \in A_2$, and $y_{i}y_{i+1}y_{i+2}$ is a path where $y_j \in A_j$ for $j=i,i+1,i+2$ for some $0 \le i \le m-2$~(let $A_0 = \{u_1\}$ and $A_m = \{u_2\}$). The edges $u_1x_1,x_1x_2, y_{i}y_{i+1}, y_{i+1}y_{i+2}$ are different.

        If $u_1x_1x_2$ and $y_{i}y_{i+1}y_{i+2}$ are contained in a common $C_{2k+1}$, then either there is a $(x_2,y_{i})$-path of length $i-2$, or there is a $(x_2,y_{i+2})$-path of length $(2k-i-3)$.
    \end{claim}

    \noindent
    \textbf{Proof of Claim~\ref{claim: unite two paths in a common cycle}.}
    \begin{figure}
        \centering
        \includegraphics[width=0.8\textwidth]{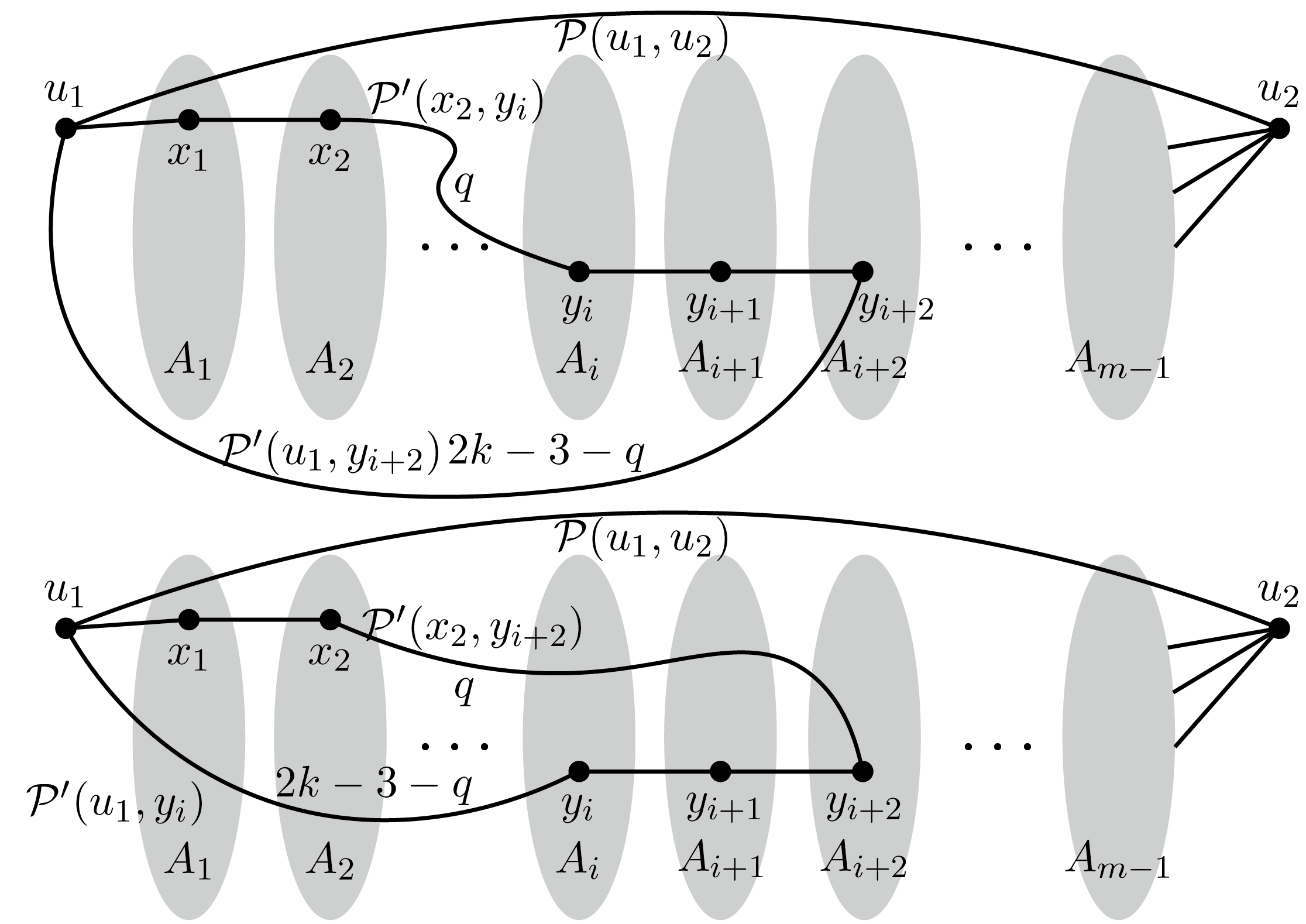}
        \caption{The cycle containing the path $u_1x_1x_2$ and the path $y_{i}y_{i+1} y_{i+2}$ in Claim~\ref{claim: unite two paths in a common cycle}.}\label{fig: claim other Delta 1}
    \end{figure}

    Since $(u_1,u_2)$ is a $(2k+1-m)$-valid pair, there exists a $(u_1,u_2)$-path of length $2k+1-m$, denoted by $\mathcal{P}(u_1,u_2)$.
    By definition, for any vertex $z$ in $A_j$ for a fixed $j$, there is a $(u_1,z)$-path of length $j$ denoted by $\mathcal{P}(u_1,z)$ and a $(z,u_2)$-path of length $m-j$ denoted by $\mathcal{P}(z,u_2)$.

    \textbf{Case 1}: the cycle consists of $u_1x_1x_2$, a $(x_2,y_{i})$-path of length $q$ denoted by $\mathcal{P}'(x_2,y_{i})$, $y_{i} y_{i+1} y_{i+2}$, and a $(u_1,y_{i+2})$-path of length $2k-3-q$ denoted by $\mathcal{P}'(u_1,y_{i+2})$, where $0 \le q \le 2k-4$~($q=0$ means $x_2 = y_{i}$).

    When $q \not\equiv i \pmod{2}$, the circuit consisting of $\mathcal{P}'(u_1,y_{i+2})$, $\mathcal{P}(y_{i+2},u_2)$, $\mathcal{P}(u_1,u_2)$ has odd length $(2k-3-q) + (m-i-2) + (2k+1-m) = 4k-4 -q -i$.
    The circuit consisting of $u_1x_1x_2$, $\mathcal{P}'(x_2,y_{i})$, and $\mathcal{P}(u_1,y_{i})$ also has odd length $q+i+2$.
    Since $(4k-4-q-i) + (q+i+2) = 4k-2 < 2(2k+1)$, at least one of these two circuits has length less than $2k+1$, a contradiction.

    When $q \equiv i \pmod{2}$, the circuit consisting of $\mathcal{P}(u_1,u_2)$, $\mathcal{P}(y_{i}, u_2)$, $\mathcal{P}'(x_2,y_{i})$, $u_1x_1x_2$ has odd length $(2k+1-m) + (m-i)+ q + 2 = 2k + 3 + q -i$.
    Therefore, $2k+3 + q -i \ge 2k+1$, and thus $q \ge i-2$;
    The circuit consisting of $\mathcal{P}(u_1,y_{i+2})$ and $\mathcal{P}'(u_1,y_{i+2})$ also has odd length $(2k-3-q)+(i+2) = 2k-1-q+i$.
    Therefore, $2k-1 -q+i \ge 2k+1$, and thus $q \le i-2$.
    Hence $q = i-2$.
    In particular, we easily get a contradiction when $i < 2$.

    \textbf{Case 2:} the cycle consists of $u_1x_1x_2$, $(x_2,y_{i+2})$-path of length $q$ denoted by $\mathcal{P}'(x_2,y_{i+2})$, $y_{i+2} y_{i+1} y_{i}$, $(u_1,y_{i})$-path of length $2k-3-q$ denoted by $\mathcal{P}'(u_1,y_{i})$.

    When $q \equiv i \pmod{2}$, the circuit consisting of $\mathcal{P}(u_1,u_2)$, $\mathcal{P}(y_{i+2},u_2)$, $\mathcal{P}'(x_2,y_{i+2})$, $u_1x_1x_2$ has odd length $(2k+1-m) + (m-i-2)+ q +2 = 2k + 1 + q -i$.
    Thus $2k+1+q-i \ge 2k+1$, and hence $q \ge i$.
    Also, the circuit consisting of $\mathcal{P}(u_1,y_{i})$ and $\mathcal{P}'(u_1,y_{i})$ has odd length $2k-3-q+i$. Thus $2k-3 - q + i \ge 2k+1$, and hence $q \le i-2$, a contradiction.

    When $q \not\equiv i \pmod{2}$, the circuit consisting of $u_1x_1x_2$, $\mathcal{P}'(x_2,y_{i+2})$, $\mathcal{P}(u_1,y_{i+2})$ has odd length $i+q+4$.
    Thus $i+q+4 \ge 2k+1$, and hence $q \ge 2k - i -3$.
    Also the circuit consisting of $\mathcal{P}(u_1,u_2)$, $\mathcal{P}'(y_{i},u_1)$, $y_{i} y_{i+1} y_{i+2}$, $\mathcal{P}(y_{i+2},u_2)$ has odd length $(2k+1-m) +(2k-3-q) + 2 + (m-i-2) = 4k-2 -i -q$.
    Thus $4k-2 -i -q \ge 2k+1$, and hence $q \le 2k - i -3$.
    Hence $q = 2k - i -3$.
    \hfill$\blacksquare$ \par

    We will use the following consequence.
    \begin{claim}\label{claim: coro unite two paths: degree two and special path}
        Let $(u_1,u_2)$ be a $(2k + 1 - m)$-valid pair and let $\{A_1,\ldots,A_{m-1}\}$ be the $m$-path-decomposition of $(u_1,u_2)$.
        Let $A_0 = \{u_1\}$ and $A_m = \{u_2\}$.
        Suppose $x_1$ is a vertex of degree two in $A_1$ with neighbors $u_1$ and $x_2 \in A_2$.

        If $y_{i+1} \in A_{i+1}$~($y_{i+1} \neq x_1$) is a vertex of degree two with neighbors $y_i \in A_i$ and $y_{i+2} \in A_{i+2}$ such that $u_1x_1x_2$ and $y_iy_{i+1}y_{i+2}$ are contained in a common $C_{2k+1}$, then either there is a $(x_2,y_{i})$-path of length $i-2$, or there is a $(x_2,y_{i+2})$-path of length $(2k-i-3)$.

        If $y_iy_{i+1}y_{i+2}y_{i+3}$ is a special path such that $u_1x_1x_2$ and $y_iy_{i+1}y_{i+2}y_{i+3}$ are contained in a common $C_{2k+1}$, then either there is a $(x_2,y_{i})$-path of length $i-2$, or there is a $(x_2,y_{i+3})$-path of length $(2k-i-4)$.
    \end{claim}

    \begin{proof}

        \begin{figure}
            \centering
            \includegraphics[width=0.8\textwidth]{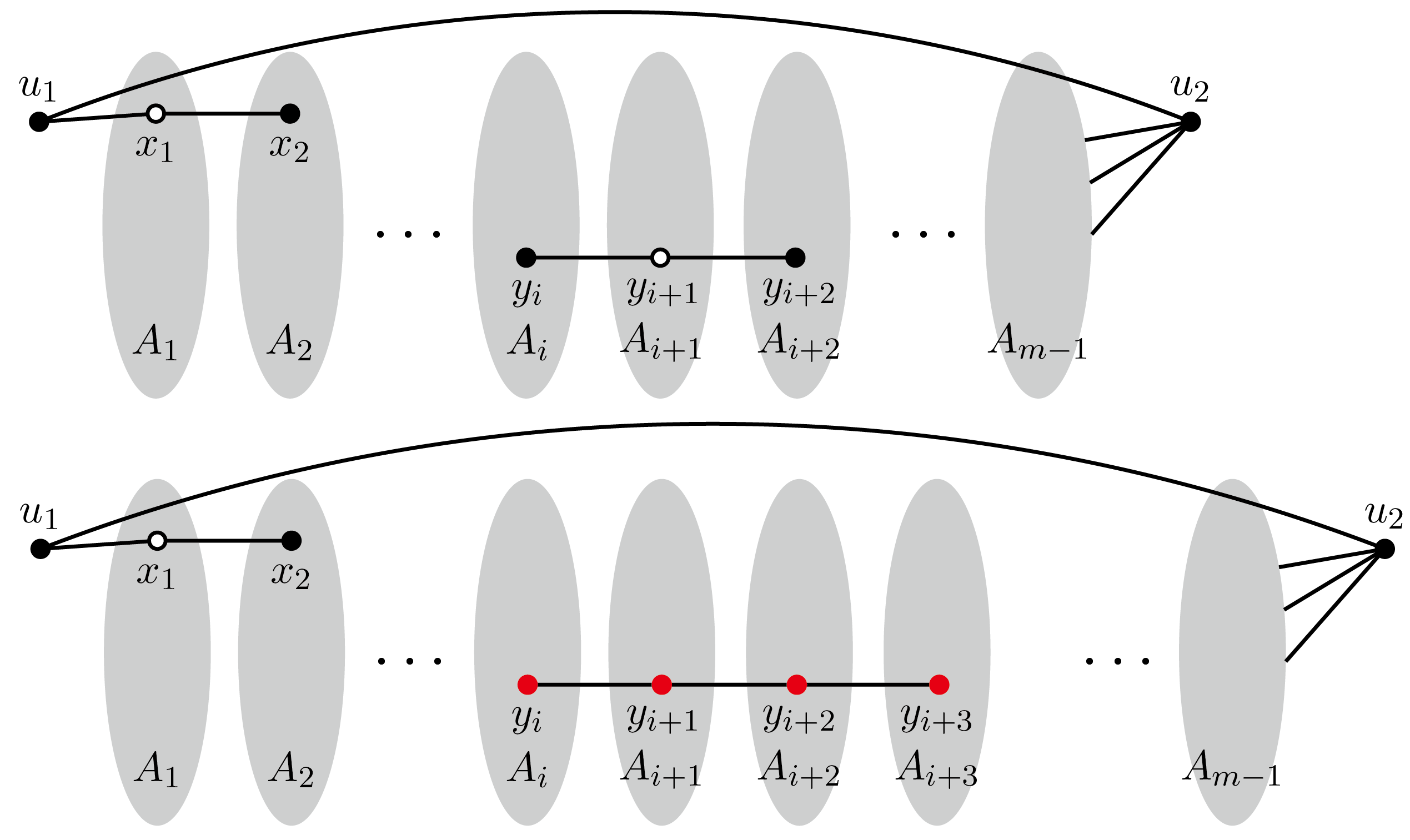}
            \caption{The hollow points represent vertices of degree two. The red vertices form a special path.}\label{fig: claim coro unite two paths: degree two and special path}
        \end{figure}
        In the case when $y_{i+1}$ is a vertex of degree two, the claim follows from Claim~\ref{claim: unite two paths in a common cycle}.
        In the case when $y_iy_{i+1}y_{i+2}y_{i+3}$ is a special path, Claim~\ref{claim: unite two paths in a common cycle} shows that either there is a $(x_2,y_{i})$-path of length $i-2$, or there is a $(x_2,y_{i+2})$-path of length $(2k-i-3)$.
        Since $y_{i+2}$ has degree three, if the neighbor of $y_{i+2}$ in the $(x_2,y_{i+2})$-path of length $(2k-i-3)$ is not $y_{i+3}$, then it must be a vertex in $A_{i+1}$, say $y_{i+1}'$.
        This means that there exists a $(x_2,y_{i+1}')$-path of length $2k-i-4$.
        Combining $u_1x_1x_2$ and the $(u_1, y_{i+1}')$-path of length $i+1$, we have a circuit of length $(2k-i-4) + (i+1) + 2 = 2k-1$, a contradiction.

        Therefore, the neighbor of $y_{i+2}$ in the $(x_2,y_{i+2})$-path of length $(2k-i-3)$ must be $y_{i+3}$.
        Then it corresponds to a $(x_2,y_{i+3})$-path of length $(2k-i-4)$.
    \end{proof}

    We now prove the lemma.
    Let $(u_1,u_2)$ be a $(2k + 1 - m)$-valid pair such that $p_m(u_1,u_2) = \Delta_m$, and let $\{A_1,A_2,\ldots,A_{m-1}\}$ be the $m$-path-decomposition of $(u_1,u_2)$.

    \textbf{Case 1:} $m$ is even and $m=2t$.

    By (\ref{eq: lb Delta m}) and Lemma~\ref{lem: order of deltas 2}, there exists a vertex $x_0 \in A_2$ such that $p_2(u_1,x_0) \ge \frac{\gamma}{2}n$ and thus by Corollary~\ref{cor: large common neighbor}, there exists a vertex $x_1 \in N(u_1) \cap N(x_0)$ of degree two.

    We claim that, for every other $x \in A_2 \setminus \{x_0\}$, either $p_2(u_1,x) \le 800k^{k}$ or $p_{m-2}(x,u_2) \le \epsilon n^{t-1}$.
    Suppose otherwise. By iteratively applying Lemma~\ref{lem: order of deltas} and Corollary~\ref{cor: large common neighbor}, there exists a path $y_1y_2\ldots y_{2t-1}$ such that $y_j \in A_j$ for all $1 \le j \le 2t-1$ and $y_j$ has degree two whenever $j$ is odd~(see the first case in Figure~\ref{fig: long path in lem: other Delta}).

    \begin{figure}
        \centering
        \includegraphics[width=0.9\textwidth]{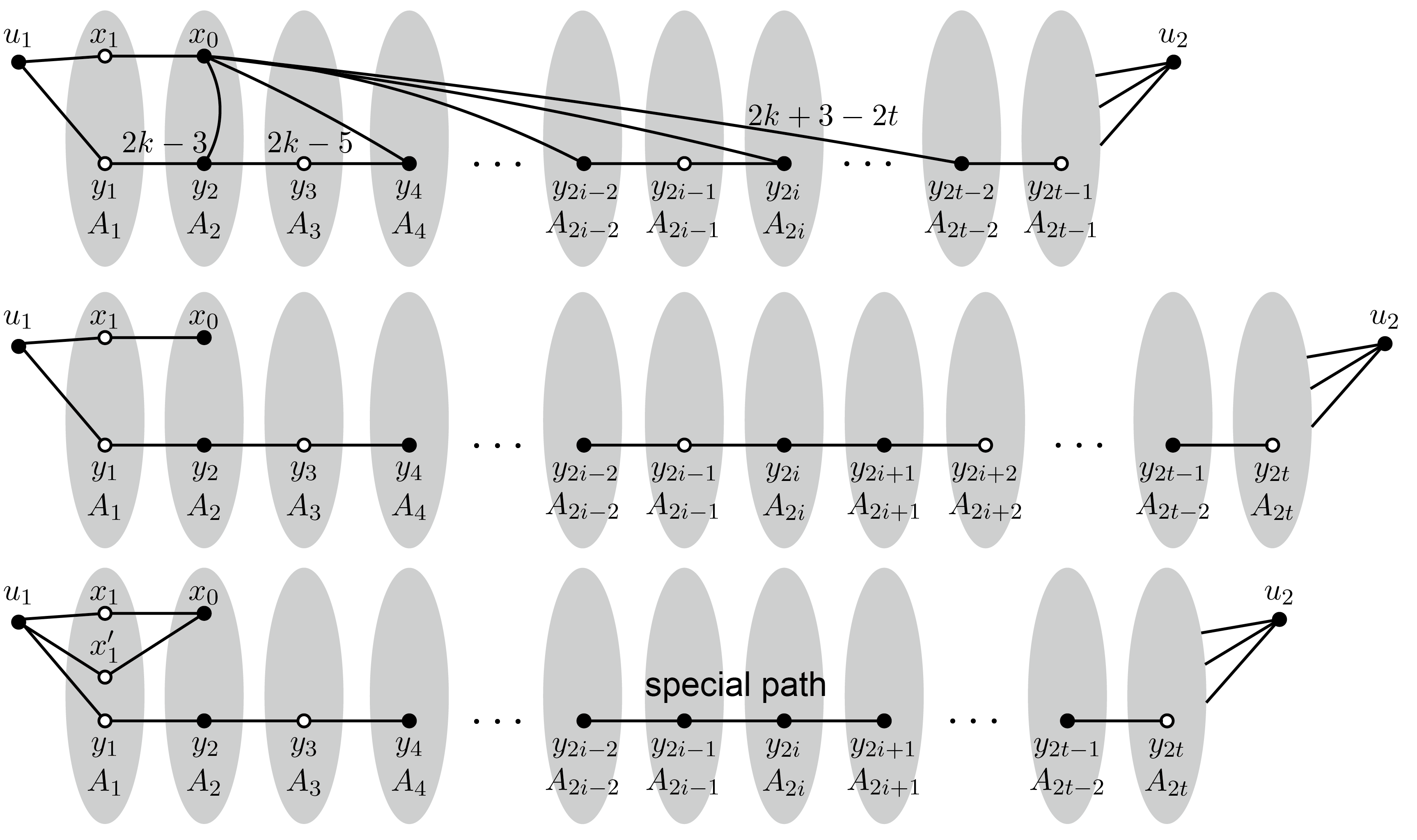}
        \caption{Three cases in the proof of Lemma~\ref{lem: other Delta}. Hollow vertices are of degree two. The path $y_{2i-2}y_{2i-1}y_{2i}y_{2i+1}$ is a special path in the third case.}\label{fig: long path in lem: other Delta}
    \end{figure}

    Since $y_1$ and $x_1$ have different neighbors, by property (2), they must be in a common $C_{2k+1}$.
    Claim~\ref{claim: coro unite two paths: degree two and special path} then gives a $(x_0,y_2)$-path of length $2k-3$.
    Applying Claim~\ref{claim: coro unite two paths: degree two and special path} again to $x_1$ and $y_3$,
    either there exists a $(x_0,y_2)$-path of length zero~(i.e., $x_0 = y_2$), or there exists a $(x_0,y_4)$-path of length $2k-5$.
    The equality $x_0=y_2$ is impossible, because then the $(x_0,y_2)$-path of length $2k-3$ gives a circuit of length $2k-3$, a contradiction.
    Hence, there exists a $(x_0,y_4)$-path of length $2k-5$.
    Repeating this argument, each application of Claim~\ref{claim: coro unite two paths: degree two and special path} to $x_1$ and $y_{2i-1}$ gives a $(x_0,y_{2i})$-path of length $2k-1-2i$.
    At the step $i = t$, we obtain a $(x_0, u_2)$-path of length $2k-1-2t$. Combining it with the $(x_0,u_2)$-path of length $2t-2$ from the definition of $A_2$, we obtain a circuit of length $2k-1$, a contradiction.

    Thus, for every $x \in A_2 \setminus \{x_0\}$, either $p_2(u_1,x) \le 800k^{k}$ or $p_{m-2}(x,u_2) \le \epsilon n^{t-1}$.
    For every $x \in A_2$, let $l(x) = p_2(u_1,x)$ and $r(x) = p_{m-2}(x,u_2)$.
    Let $E_i$ be the edge set between $A_{2i-1}$ and $A_{2i}$ for $1 \le i \le t-1$.
    Let $E_i'$ be the edge set between $A_{2i}$ and $A_{2i+1}$ for $1 \le i \le t-1$.
    We have $\sum_{x \in A_2} l(x) \le |E_1| \le n$ and $\sum_{x \in A_2} r(x) \le  \prod_{i=1}^{t-1} |E_i'| \le n^{t-1}$.
    For sufficiently large $n_4$,

    \begin{equation}\label{eq: Delta_m to Delta_m-2}
    \begin{aligned}
        p_{m}(u_1,u_2) = \Delta_m & = \sum_{x \in A_2} l(x) \cdot r(x) \\
        & \le l(x_0) r(x_0) + 800k^{k} n^{t-1} + \epsilon n^{t-1} \cdot n \\
        & \le \Delta_2 \Delta_{m-2} + 4\epsilon n^t.
    \end{aligned}
    \end{equation}

    \textbf{Case 2:} $m$ is odd and $m = 2t+1$.

    By Lemma~\ref{lem: order of deltas} and Lemma~\ref{lem: order of deltas 2}, either there exists a vertex $x_0 \in A_2$ such that $p_2(u_1,x_0) \ge \frac{\gamma}{2} n$, or there exists a vertex $x_0' \in A_{m-2}$ such that $p_{m-2}(x_0',u_2) \ge \frac{\gamma}{2} n$.
    Without loss of generality, we may assume there exists the vertex $x_0 \in A_2$ with the above property.
    Thus by Corollary~\ref{cor: large common neighbor}, there exist two vertices $x_1, x_1' \in N(u_1) \cap N(x_0)$ of degree two.
    As in Case 1, we claim that, for every other $x \in A_2 \setminus \{x_0\}$, either $p_2(u_1,x) \le 800k^k$ or $p_{m-2}(x,u_2) \le \epsilon n^{t-1}$.
    Suppose otherwise. By Corollary~\ref{cor: large common neighbor} and repeated applications of Lemma~\ref{lem: order of deltas} and Lemma~\ref{lem: vertex of degree two or long path}, we distinguish several cases.

    \textbf{Case 2.1}: There exists a path $y_1y_2\ldots y_{2t}$ such that $y_j \in A_j$ for all $1 \le j \le 2t$ and, $y_1,y_3,\ldots,y_{2i-1}, y_{2i+2}, y_{2i+4}, \ldots, y_{2t}$ are vertices of degree two~(see the second case in Figure~\ref{fig: long path in lem: other Delta}).

    By property (2), $x_1$ and $y_1$ must be in a common $C_{2k+1}$.
    Applying Claim~\ref{claim: coro unite two paths: degree two and special path}, we obtain a $(x_0,y_2)$-path of length $2k-3$.
    Repeating the argument from Case 1 gives a $(x_0, y_{2i})$-path of length $2k-1-2i$.

    By property (2), we may apply Claim~\ref{claim: coro unite two paths: degree two and special path} to $x_1$ and $y_{2i+2}$.
    If there exists a $(x_0,y_{2i+1})$-path of length $2i-1$, then we obtain a circuit of length $(2k-1-2i)+(2i-1) + 1 = 2k-1$ by combining the $(x_0, y_{2i})$-path of length $2k-1-2i$ and $y_{2i}y_{2i+1}$, a contradiction.
    Hence there exists a $(x_0,y_{2i+3})$-path of length $2k-4-2i$.
    Repeating this process and applying the claim to $x_1$ and $y_{2t}$ gives a $(x_0,u_2)$-path of length $2k-1-m$.
    Combining the $(x_0,u_2)$-path of length $m-2$ from the definition of $A_2$, we obtain a circuit of length $2k-3$, a contradiction.

    \textbf{Case 2.2}: There exists a path $y_1y_2\ldots y_{2t}$ such that $y_j \in A_j$ for all $1 \le j \le 2t$ and, $y_1,y_3,\ldots,y_{2i-3}, y_{2i+2}, y_{2i+4}, \ldots, y_{2t}$ are vertices of degree two, and $y_{2i-2}y_{2i-1}y_{2i}y_{2i+1}$ is a special path~(see the third case in Figure~\ref{fig: long path in lem: other Delta}).

    The same process as in Case 1 gives a $(x_0, y_{2i-2})$-path of length $2k+1-2i$.
    By property (3), $x_1$ and the special path $y_{2i-2}y_{2i-1}y_{2i}y_{2i+1}$ must be in a common $C_{2k+1}$.
    Applying Claim~\ref{claim: coro unite two paths: degree two and special path}, either there exists a $(x_0,y_{2i-2})$-path of length $2i-4$, or there exists a $(x_0,y_{2i+1})$-path of length $2k-2i-2$.
    In the first case, combining this path with the $(x_0,y_{2i-2})$-path of length $2k+1-2i$ gives a circuit of length $2k-3$, a contradiction.
    Hence, there exists a $(x_0,y_{2i+1})$-path of length $2k-2-2i$.
    Applying Claim~\ref{claim: coro unite two paths: degree two and special path} to $y_{2i+2}, y_{2i+4}, \ldots, y_{2t}$ as in Case 1, we finally obtain a $(x_0,u_2)$-path of length $2k-1-m$, a contradiction.

    The same argument, using Lemma~\ref{lem: vertex of degree two or long path}, shows that for every vertex $x \in A_3$, either $p_3(u_1,x) \le n^{0.2}$ or $p_{m-3}(x,u_2) \le \epsilon n^{t-1}$.
    Let $H$ be a weighted auxiliary graph induced by $A_2 \cup A_3$.
    Then $H$ is bipartite with parts $A_2$ and $A_3$.
    For every vertex $x \in A_2$, let $w(x) = p_2(u_1,x)$.
    For every vertex $y \in A_3$, let $w(y) = p_{m-3}(y,u_2)$.
    For an edge $xy \in E(H)$, let $w(xy) = w(x) w(y)$.
    Define $w(H) = \sum_{xy \in E(H)} w(xy)$.
    Let $A_2'$ be the set of vertices in $A_2\setminus \{x_0\}$ such that $p_2(u_1,x) \le 800k^k$.
    Let $A_3'$ be the set of vertices in $A_3$ such that $p_{m-3}(x,u_2) \le \epsilon n^{t-1}$.
    For a subset $S$ of $V(H)$, let $w(S) = \sum_{x \in S} w(x)$.
    Let $E_1$ be the edge set between $A_1$ and $A_2$, $E_i'$ be the edge set between $A_{2i-1}$ and $A_{2i}$ for $i=1,2,\ldots, t-1$.
    By definition and Observation~\ref{obs: forest between Ai and Ai+1}, $w(A_2) \le |E_1| \le n$ and $w(A_3) \le \prod_{i=2}^{t-1} |E_i'| \le n^{t-2}$.
    By the previous statements, for any vertex $x \in A_2\setminus \left(A_2' \cup \{x_0\}\right)$, $w(N_H(x)) \le p_{m-2}(x,u_2) \le \epsilon n^{t-1}$.
    Similarly, for any vertex $y \in A_3\setminus A_3'$, $w(N_H(y)) \le p_3(u_1,y) \le n^{0.2}$.
    Let $H'$ be the weighted subgraph of $H$ induced by $A_2' \cup A_3'$.
    Combining Lemma~\ref{lem: bi tree} with the preceding bounds gives
    \begin{equation*}
    \begin{aligned}
        w(H) & \le w(x_0)w(N(x_0)) + \sum_{x \in A_2\setminus \left(A_2' \cup \{x_0\}\right)}w(x)w(N_H(x)) + \sum_{y \in A_3\setminus A_3'}w(y)w(N_H(y)) + w(H')  \\
        & \le \Delta_2 \Delta_{m-2} + 2\epsilon n^t + w(A_2) \epsilon n^{t-1} + w(A_3) 800k^k  \\
        & \le \Delta_2 \Delta_{m-2} + 4\epsilon n^t.
    \end{aligned}
    \end{equation*}

    Thus $\Delta_m \le \Delta_2 \Delta_{m-2} + 4\epsilon n^{\lfloor m/2 \rfloor}$ for both even and odd $m$.
    Combining this inequality with $\Delta_2,\Delta_3 \le n $ from Lemma~\ref{lem: size lower bound of A}, and then applying induction, yields for $4 \le m \le 2k-3$ that
    \begin{equation*}
        \Delta_m \le \left\{ \begin{array}{ll}
            \Delta_2^{m/2} + 2m \epsilon n^{m/2}, & \text{if $m$ is even};\\
            \Delta_2^{(m-3)/2} \Delta_3 + 2m \epsilon n^{(m-1)/2}, & \text{if $m$ is odd}.
        \end{array} \right.
    \end{equation*}
\end{proof}

% -------------------------------------------------------------
\subsection{A large \texorpdfstring{$\Delta_2$}{Δ2}}\label{subsec: large Delta2}

In this subsection, we show that $\Delta_2$ is large enough.

\begin{lemma}\label{lem: big Delta2}
    Let $G$ be a planar graph on $n$ vertices forbidding $\mathcal{C}^o_{<2k+1}$ with property (1), (2) and (3).
    Let $\epsilon, \epsilon'$ be fixed small constants with $\epsilon \ll \epsilon'$.
    Then there exists $n_5 = n_5(\epsilon, \epsilon', k)$ such that when $n \ge n_5$, we have
    \[
        \Delta_2 \ge \frac{1}{k}n - \epsilon' n.
    \]
\end{lemma}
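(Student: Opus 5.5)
We argue by contradiction: suppose $\Delta_2 < \frac{1}{k}n - \epsilon' n$. We count the copies of $C_{2k+1}$ through a degree-two vertex, or through a special path, and bound that count using Lemma~\ref{lem: other Delta}. The lower bound comes from property~(1).

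\emph{Finding the anchor.} As in Section~\ref{subsec: prep Delta 2 Delta 3}, take a $(2k-1)$-valid pair $(u_1,u_2)$ with $p_2(u_1,u_2)=\Delta_2\ge \frac{1}{22k^{k-1}}n$. Corollary~\ref{cor: large common neighbor} gives a degree-two vertex $w\in N(u_1)\cap N(u_2)$. By property~(1),
\[
p_{2k-1}(u_1,u_2)\ge h_k(n)-h_k(n-1)\ge \left(\tfrac{n-2k-2}{k}\right)^{k-1}.
\]
Let $\{A_1,\dots,A_{2k-2}\}$ be the $(2k-1)$-path-decomposition of $(u_1,u_2)$. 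Then $\Delta_{2k-1}\ge p_{2k-1}(u_1,u_2)$, and it remains to bound $p_{2k-1}(u_1,u_2)$ from above in terms of $\Delta_2$ and $\Delta_3$.

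\emph{Upper bound.} Split each $(u_1,u_2)$-path of length $2k-1$ at $A_2$, writing
\[
p_{2k-1}(u_1,u_2)=\sum_{x\in A_2}p_2(u_1,x)\,p_{2k-3}(x,u_2).
\]
Note $2k-3\le 2k-3$ is within the range of Lemma~\ref{lem: other Delta} when $k\ge 4$; for $k=3$ we have $2k-3=3$, so we use $\Delta_3$ directly. The pair $(x,u_2)$ is $4$-valid, so $p_{2k-3}(x,u_2)\le \Delta_{2k-3}$, which is at most $\Delta_2^{k-3}\Delta_3+O(\epsilon)n^{k-2}$. The weights $p_2(u_1,x)$ sum to at most $|E_1|$, the number of edges between $A_1$ and $A_2$, and Lemma~\ref{lem: bi tree} gives the sharper bound $|E_1|\le |A_1|+|A_2|$.

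A cleaner route is to repeat the argument of Case~1 of Lemma~\ref{lem: other Delta}, using $w$ as the fixed degree-two vertex. By property~(2) and Claim~\ref{claim: coro unite two paths: degree two and special path}, at most one $x_0\in A_2$ has both $p_2(u_1,x_0)$ large and $p_{2k-3}(x_0,u_2)$ large. This gives
\[
p_{2k-1}(u_1,u_2)\le \Delta_2\Delta_{2k-3}+O(\epsilon n^{k-1})\le \Delta_2^{k-2}\Delta_3+O(k\epsilon)n^{k-1}.
\]

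\emph{Bounding $\Delta_3$.} Lemma~\ref{lem: large star of long path} says that either a large star realises almost all of $p_3$, so that $\Delta_3\le \Delta_2+2\epsilon n$, or there is a special path.

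\begin{itemize}
\item In the star case, $\Delta_3\le \Delta_2+2\epsilon n$, so $p_{2k-1}\le \Delta_2^{k-1}+O(\epsilon)n^{k-1}$.
\item In the special-path case, Lemma~\ref{lem: with long path} gives $p_{2k-2}\ge(1-\epsilon)(n/k)^{k-1}$ for that pair. We then run the same decomposition on the $(2k-2)$-paths, which reduces to $\Delta_2^{k-1}$ again, using property~(3) in place of~(2), as in Case~2.2 of Lemma~\ref{lem: other Delta}.
\end{itemize}

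\emph{Conclusion.} In both cases we get $(n/k)^{k-1}(1-O(\epsilon))\le \Delta_2^{k-1}+O(\epsilon)n^{k-1}$. Since $\epsilon\ll\epsilon'$, this forces $\Delta_2\ge n/k-\epsilon' n$.

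\emph{Main obstacle.} The factor $\Delta_2$ appears only once, through $x_0$, and the remaining factors are $\Delta_{2k-3}$. The key point is that a long path must use $\Delta_2$-type blocks only. Any block that achieves $\Delta_3$ with no matching large $\Delta_2$ must be reduced by Lemma~\ref{lem: large star of long path}, and there the case analysis around the special path is the delicate step.
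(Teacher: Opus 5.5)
Your case split (star versus special path) matches the paper's split on $\Delta_2$ versus $\Delta_3-2\epsilon n$, and the route you take from there is viable, but its key step is not justified as written.

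\textbf{How the paper closes each case.} The paper never pushes the recursion of Lemma~\ref{lem: other Delta} past $\Delta_{2k-3}$. In each case it weights one bipartite layer: $G[A_{2k-4},A_{2k-3}]$ for the special-path pair, and $G[A_2,A_3]$ for the degree-two pair. It bounds that weighted count twice: once as $\Delta_2$ (resp.\ $\Delta_3$) times a product of the $e_i$, and once as $\Delta_{2k-3}$ times the remaining factor. It then multiplies the two bounds and applies AM--GM under the disjointness constraint $\Delta_3+|A|\le n$ (resp.\ $\Delta_2+|A|\le n$) from Lemma~\ref{lem: t-decomp and 2k+1-t-decomp}. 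That constraint is where $n/k$ comes from.

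\textbf{What your route needs instead.} You need the recursion one or two levels higher: $p_{2k-1}(u_1,u_2)\le \Delta_2\Delta_{2k-3}+O(\epsilon n^{k-1})$ in the star branch, and $\Delta_{2k-2}\le\Delta_2\Delta_{2k-4}+O(\epsilon n^{k-1})$ in the special-path branch. Given these, property~(1), resp.\ Lemma~\ref{lem: with long path}, finishes with no counting constraint. But Lemma~\ref{lem: other Delta} only covers $4\le m\le 2k-3$ (for $k=3$ that range is empty). So both inequalities are new claims, and your sketch of the first one fails for two reasons.

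\textbf{First problem: the fixed vertex.} $w$ cannot serve as the fixed degree-two vertex. It lies in $N(u_1)\cap N(u_2)$, hence in no $A_i$. For any degree-two $y_1\in A_1$ with other neighbour $y_2\in A_2$, the cycle $u_2wu_1y_1y_2\cdots u_2$ of length $2k+1$ always exists, so property~(2) yields nothing. Claim~\ref{claim: coro unite two paths: degree two and special path} has force only when the fixed vertex is a degree-two $x_1\in N(u_1)\cap N(x_0)$ with $x_0\in A_2$ and $p_2(u_1,x_0)\ge\gamma n/2$. Such an $x_0$ need not exist on the $u_1$ side, since Lemma~\ref{lem: order of deltas} may only return a vertex of $A_3$. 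So you must orient the pair.

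\textbf{Second, more serious problem: odd length.} ``At most one exceptional $x_0$'' does not give the bound when the length is odd. The Case~1 estimate leaves an error term $800k^k\sum_{x\in A_2}p_{2k-3}(x,u_2)$. With the odd number of layers $A_2,\dots,A_{2k-2}$, this sum is only $O(n^{k-1})$, not $O(n^{k-2})$. For a concrete instance, take the extremal graph whose tree is a path, oriented from the tree side. Every $x\in A_2=T_2$ has $p_2(u_1,x)\le 2$, yet these vertices carry all of $p_{2k-1}(u_1,u_2)$.

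\textbf{The repair.} Rerun Case~2 of the proof of Lemma~\ref{lem: other Delta} at $m=2k-1$: prove the $A_3$-claim via Lemma~\ref{lem: vertex of degree two or long path} and property~(3), then apply Lemma~\ref{lem: bi tree} to $G[A_2,A_3]$. This does go through:
\begin{itemize}
\item Lemmas~\ref{lem: order of deltas} and~\ref{lem: order of deltas 2} with $t=1$ are in range.
\item Claim~\ref{claim: unite two paths in a common cycle} holds for every $m$.
\item The terminal contradiction becomes $x_0=u_2$.
\end{itemize}
For $m=2k-2$ the length is even, so Case~1 with property~(2) alone suffices. There the terminal contradiction is an edge $x_0u_2$, which closes a circuit of length $2k-3$. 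Thus property~(3) enters your star branch through the odd recursion, not the special-path branch as you wrote.

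\textbf{Comparison once repaired.} Your ending is shorter than the paper's: it gives $\Delta_2^{k-1}\ge (n/k)^{k-1}-O(\epsilon)n^{k-1}$ directly. The cost is re-proving the recursion beyond its stated range. The paper stays inside that range and pays with the two-sided weighted count and the AM--GM bookkeeping.
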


\begin{proof}
    We distinguish two cases.

    \textbf{Case 1:} $\Delta_2 < \Delta_3 - 2\epsilon n$.

    Let $(u_1,u_2)$ be a $(2k-2)$-valid pair with $p_3(u_1,u_2) = \Delta_3$.
    Let $\{A_1, A_2, \ldots, A_{2k-3}\}$ be the $(2k-2)$-path-decomposition of $(u_1,u_2)$.
    Let $\{B_1,B_2\}$ be the $3$-path-decomposition of $(u_1,u_2)$.
    By Lemma~\ref{lem: t-decomp and 2k+1-t-decomp}, $A_i$ and $B_j$ are disjoint for all $1 \le i \le 2k-3$ and $1 \le j \le 2$.
    Let $B = B_1 \cup B_2$. Lemma~\ref{lem: size lower bound of A} gives $|B| \ge \Delta_3$.
    Let $A = \bigcup_{i=1}^{2k-3} A_i$. Then $\Delta_3 + |A| \le |A| + |B| \le n$.
    Let $E_i$ be the edge set between $A_{2i-1}$ and $A_{2i}$, $1 \le i \le k-2$ and denote $e_i = |E_i|$.

    Let $H$ be the weighted bipartite subgraph $G[A_{2k-4}, A_{2k-3}]$.
    For every $x \in A_{2k-4}$, let $w(x) = p_{2k-4}(u_1,x)$.
    For every $y \in A_{2k-3}$, let $w(y) = 1$.
    For a vertex subset $S$, let $w(S) = \sum_{v \in S} w(v)$.
    Define $w(H) = \sum_{xy \in E(H)} w(x) w(y)$.
    By definition, $\sum_{x \in A_{2k-4}} w(x) \le \prod_{i=1}^{k-2} e_i$ and $\sum_{y \in A_{2k-3}} w(y) = |A_{2k-3}|$.
    Moreover, $w(H) = p_{2k-2}(u_1,u_2)$.
    By Lemma~\ref{lem: large star of long path}, either there exists a special path between $B_1$ and $B_2$, or
    \[
        \max_{x \in B_1, y\in B_2} \{|N(x) \cap N(u_2)|, |N(y)\cap N(u_1)|\} \ge p_3(u_1,u_2) - 2\epsilon n = \Delta_3 - 2\epsilon n.
    \]
    The second case is impossible since $\Delta_2 < \Delta_3 - 2\epsilon n$.
    Therefore Lemma~\ref{lem: with long path} gives $p_{2k-2}(u_1,u_2) \ge (1-\epsilon){\left( \frac{n-3\ell}{k} \right)}^{k-1}$.
    By Lemma~\ref{lem: other Delta}, we have the following inequalities.

    \begin{equation*}
        \begin{aligned}
            (1-\epsilon){\left( \frac{n-3\ell}{k} \right)}^{k-1} \le w(H) & \le \sum_{x \in A_{2k-4}} w(x) w(N_H(x)) \\
            & \le \sum_{x \in A_{2k-4}} w(x) \Delta_2  \\
            & \le \Delta_2 \left( e_1 e_2 \ldots e_{k-2} \right),
        \end{aligned}
    \end{equation*}
    and
    \begin{equation*}
    \begin{aligned}
        (1-\epsilon){\left( \frac{n-3\ell}{k} \right)}^{k-1} \le w(H) & \le \sum_{y \in A_{2k-3}} w(y) w(N_H(y)) \\
            & \le \Delta_{2k-3} \sum_{y \in A_{2k-3}} w(y) \\
            & \le |A_{2k-3}| \Delta_2^{k-3}\Delta_3 +  |A_{2k-3}| 4k \epsilon n^{k-2}.
    \end{aligned}
    \end{equation*}
    Also,
    \begin{equation*}
    \begin{aligned}
        \Delta_3 + e_1 + e_2 + \cdots + e_{k-2} + |A_{2k-3}| \le \Delta_3 + |A| \le n.
    \end{aligned}
    \end{equation*}
    Combining the above inequalities and using the AM--GM inequality, we have
    \begin{equation*}
    \begin{aligned}
        {(1-\epsilon)}^{2}   {\left( \frac{n-3\ell }{ k}\right)}^{2(k-1)} \le {w(H)}^2 & \le \Delta_2^{k-2} \Delta_3 (e_1 e_2 \ldots e_{k-2}) |A_{2k-3}| + 4k \epsilon n^{2k-2}\\
        & \le \Delta_2^{k-2} {\left( \frac{n}{k} \right)}^{k} + 4k \epsilon n^{2k-2}.
    \end{aligned}
    \end{equation*}
    Since $\epsilon \ll \epsilon'$, we have
    \begin{equation*}
    \begin{aligned}
        \Delta_2 \ge \frac{1}{k}n - \epsilon' n.
    \end{aligned}
    \end{equation*}

    \textbf{Case 2:} $\Delta_2 \ge \Delta_3 - 2\epsilon n$.

    Let $(u_1,u_2)$ be a $(2k-1)$-valid pair with $p_2(u_1,u_2) = \Delta_2$.
    Then by Corollary~\ref{cor: large common neighbor}, and property (1), $p_{2k-1}(u_1,u_2) \ge {\left(\frac{n-2k-2}{k}\right)}^{k-1}$.
    Let $\{A_1, A_2, \ldots, A_{2k-2}\}$ be the $(2k-1)$-path-decomposition of $(u_1,u_2)$ .
    Let $A = \bigcup_{i=1}^{2k-2} A_i$. The same argument as in Case 1 gives $|A| + \Delta_2 \le n$.
    Let $E_i$ be the edge set between $A_{2i-1}$ and $A_{2i}$, $1 \le i \le k-1$. Let $e_i = |E_i|$.

    Let $H$ be the weighted bipartite subgraph $G[A_2, A_3]$.
    For every $x \in A_2$, let $w(x) = p_2(u_1,x)$.
    For every $y \in A_3$, let $w(y) = p_{2k-4}(y,u_2)$.
    For a vertex subset $S$, let $w(S) = \sum_{v \in S} w(v)$.
    Define $w(H) = \sum_{xy \in E(H)} w(x) w(y)$.
    By definition, we have $w(H) = p_{2k-1}(u_1,u_2) \ge {\left(\frac{n-2k-2}{k}\right)}^{k-1}$.
    As in Case 1, $\sum_{x \in A_2} w(x) \le e_1$ and $\sum_{y \in A_3} w(y) \le \prod_{i=2}^{k-1} e_i$.
    Lemma~\ref{lem: other Delta} gives the following inequalities.

    \begin{equation*}
    \begin{aligned}
        {\left(\frac{n-2k-2}{k}\right)}^{k-1} \le w(H) & \le \sum_{x \in A_2} w(x) w(N_H(x)) \\
            & \le \sum_{x \in A_2} w(x) \cdot \Delta_{2k-3} \\
            & \le e_1 \left( \Delta_2^{k-3}\Delta_3 + 4k\epsilon n^{k-2} \right),
    \end{aligned}
    \end{equation*}
    and
    \begin{equation*}
    \begin{aligned}
        {\left(\frac{n-2k-2}{k}\right)}^{k-1} \le w(H) & \le \sum_{y \in A_3} w(y) w(N_H(y)) \\
            & \le \sum_{y \in A_3} w(y) \cdot \Delta_{3} \\
            & \le \Delta_2 e_2 e_3 \ldots e_{k-1}.
    \end{aligned}
    \end{equation*}
    Also,
    \begin{equation*}
    \begin{aligned}
        \Delta_2 + e_1 + e_2 + \cdots + e_{k-1} \le \Delta_2 + |A| \le n.
    \end{aligned}
    \end{equation*}
    Combining the above inequalities and using the AM--GM inequality, we have
    \begin{equation*}
    \begin{aligned}
         {\left( \frac{n-2k-2 }{ k}\right)}^{2(k-1)} \le {w(H)}^2 & \le \left(e_1 \left( \Delta_2^{k-3}\Delta_3 + 4k\epsilon n^{k-2} \right)\right) \Delta_2 e_2 e_3 \ldots e_{k-1} \\
         &\le \Delta_2^{k-2} {\left( \frac{n}{k}\right)}^{k} + 4k \epsilon n^{2k-2}.
    \end{aligned}
    \end{equation*}
    Since $\epsilon \ll \epsilon'$, we have
    \begin{equation*}
    \begin{aligned}
        \Delta_2 \ge \frac{1}{k}n - \epsilon' n.
    \end{aligned}
    \end{equation*}
\end{proof}

% -------------------------------------------------------------
\subsection{Bootstrap: complete the proof}\label{subsec: Bootstrap}

In Section~\ref{subsec: prep Delta 2 Delta 3}, we determined the order of magnitude of $\Delta_m$ for every $m = 2,3,\ldots,2k-1$.
Combining this with the lower bound $\Delta_2 \ge \frac{1}{k}n - \epsilon' n$, we now obtain much sharper estimates for $\Delta_m$.

\begin{lemma}\label{lem: bootstrap 1}
    Let $\xi$ be a fixed small constant, $1 \le t \le k-2$.
    Let $G$ be a planar graph on $n$ vertices forbidding $\mathcal{C}^o_{<2k+1}$, with property (1), (2) and (3).
    There exists a constant $n_6 = n_6(\xi, k)$ such that, whenever $n \ge n_6$ and $\epsilon' \ll \xi$, the following holds.
    Let $(u_1,u_2)$ be both a $2t$-valid pair and a $(2k-2t+1)$-valid pair with $p_{2k-2t+1}(u_1, u_2) \ge \frac{n^{k-t}}{k^{k-t}} - \epsilon' n^{k-t}$.
    Let $\{A_1, A_2, \ldots, A_{2k-2t}\}$ be the $(2k-2t+1)$-path-decomposition of $(u_1,u_2)$.
    Let $A = \bigcup_{i=1}^{2k-2t}A_i$ and assume $|A| \le \frac{k-t}{k}n + \epsilon' n$.
    Then either there exists a vertex $w$ in $A_2$ such that
    \begin{enumerate}
        \item $p_2(u_1, w) = |N(w) \cap N(u_1)| \ge \frac{1}{k}n - 2\xi n$.
        \item $p_{2k-2t-1}(w, u_2) \ge \frac{n^{k-t-1}}{k^{k-t-1}} - 2\xi n^{k-t-1}$.
        \item $|A'| \le \frac{k-t-1}{k}n + 2\xi n$, where $A'$ is the union of the sets in the $(2k-2t-1)$-path-decomposition of $(w,u_2)$.
    \end{enumerate}
    or there exists a vertex $w'$ in $A_{3}$ such that
    \begin{enumerate}
        \item $p_3(w', u_1) \ge \frac{1}{k}n - 2\xi n$.
        \item $p_{2k-2t-2}(w', u_2) \ge \frac{n^{k-t-1}}{k^{k-t-1}} - 2\xi n^{k-t-1}$.
        \item $|A'| \le \frac{k-t-1}{k}n + 2\xi n$, where $A'$ is the union of the sets in the $(2k-2t-2)$-path-decomposition of $(w',u_2)$.
    \end{enumerate}
    % there exists a vertex $w'$ in $A_{2k-2t-1}$ such that
    % \begin{enumerate}
    %     \item $p_2(w', u_2) = |N(w') \cap N(u_2)| \ge \frac{1}{k}n - 2\xi n$.
    %     \item $p_{2k-2t-1}(u_1, w') \ge \frac{n^{k-t-1}}{k^{k-t-1}} - 2\xi n^{k-t-1}$.
    %     \item $|A'| \le \frac{k-t-1}{k}n + 2\xi n$, where $A'$ is the union of the sets in the $(2k-2t-1)$-path-decomposition of $(u_1, w')$.
    % \end{enumerate}
\end{lemma}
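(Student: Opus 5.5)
We will run the argument of Lemma~\ref{lem: order of deltas}, but now with the sharp lower bound on $p_{2k-2t+1}(u_1,u_2)$ and the sharp upper bound on $|A|$, so that AM--GM forces every factor to be close to $n/k$. As in that proof, let $E_i$ be the edge set between $A_{2i-1}$ and $A_{2i}$ for $1\le i\le k-t$, and set $e_i=|E_i|$. By Observation~\ref{obs: forest between Ai and Ai+1}, $e_i\le |A_{2i-1}|+|A_{2i}|$. Hence
\[
\sum_i e_i\le |A|\le \tfrac{k-t}{k}n+\epsilon' n, \qquad \prod_i e_i\ge p_{2k-2t+1}(u_1,u_2)\ge \tfrac{n^{k-t}}{k^{k-t}}-\epsilon' n^{k-t}.
\]
A stability form of AM--GM then gives $|e_i-n/k|\le \xi' n$ for every $i$, with $\epsilon'\ll\xi'\ll\xi$. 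It also gives
\[
|A_{2i-1}|+|A_{2i}|-e_i\le \xi' n \quad\text{for every } i,
\]
and the sets $A_{2i-1}\cup A_{2i}$ essentially exhaust $A$. Every $(u_1,u_2)$-path of length $2k-2t+1$ corresponds to a hyperedge of the $(k-t)$-partite hypergraph $\mathcal H$ with parts $E_1,\dots,E_{k-t}$. Thus $\mathcal H$ has density at least $1-O(\xi')$ in the product $E_1\times\cdots\times E_{k-t}$.

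Next we locate $w$. Because $\mathcal H$ is nearly complete, a typical edge $a_3a_4\in E_2$ lies in at least $(1-\sqrt{\xi'})e_1$ hyperedges together with edges of $E_1$. We rerun the case split of Lemma~\ref{lem: order of deltas} with $Y$ defined as there.

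If $|Y|\ge 3$, planarity forces every hyperedge through a middle edge $x_2y_2$ to pass through $a_3$. Density makes this hold for almost all of $E_1$, so almost every path passes through $a_3$. We then take $w'=a_3$. We get $p_3(u_1,w')\ge e_1-O(\sqrt{\xi'}n)$, and the number of $(w',u_2)$-paths of length $2k-2t-2$ is at least $\prod_{i\ge2}e_i$ up to a small loss.

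If $|Y|\le 2$, we argue that essentially all of $E_1$ is a star centered at one vertex $w\in A_2$. Since $E_1$ is a forest with $e_1\approx |A_1|+|A_2|$, a single vertex cannot carry $e_1$ unless $|A_2|$ is tiny. We make this precise using the fact that $a_3a_4$ is typical, and a second typical edge of $E_2$ would otherwise create a crossing. Then $p_2(u_1,w)\ge e_1-O(\sqrt{\xi'}n)\ge \frac1k n-2\xi n$, and property 2 follows from the near-completeness of $\mathcal H$ restricted to paths through $w$.

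For property 3, the $(2k-2t-1)$-path-decomposition $A'$ of $(w,u_2)$ is contained in $A_3\cup\cdots\cup A_{2k-2t}$. This holds because each such path extends via $u_1$--$x$--$w$ to a $(u_1,u_2)$-path of length $2k-2t+1$, which is legitimate since Lemma~\ref{lem: Ai Aj disjoint} keeps the paths vertex-disjoint. Hence
\[
|A'|\le |A|-|A_1|-|A_2|\le |A|-e_1+\xi' n\le \tfrac{k-t-1}{k}n+2\xi n.
\]
The $w'$ case is analogous: $A'\subseteq A_4\cup\cdots$, and $|A_1|+|A_2|+|A_3|\ge e_1$.

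The main obstacle is the dichotomy step. Lemma~\ref{lem: order of deltas} only needed a $\gamma n/2$ fraction, whereas here we must show that a single center captures a $1-O(\xi)$ fraction of $E_1$ or of the length-$3$ paths. In the case $|Y|\le2$ the two candidate centers $y_1,y_2$ could split the mass. To rule this out, I would first try to show that the paths through $y_1$ and through $y_2$ toward $a_3$ cannot both be heavy for many different typical $a_3a_4$ without a planarity crossing. If that fails, I would use property (2) together with Corollary~\ref{cor: large common neighbor}: two heavy centers each have a degree-two common neighbour with $u_1$, and these two vertices cannot lie on a common induced $C_{2k+1}$, contradicting property (2), much as in the proof of Lemma~\ref{lem: large star of long path}.
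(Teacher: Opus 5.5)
Your preliminary steps (stability of the $e_i$, near-completeness of $\mathcal H$, and $|A'|\le |A|-|A_1|-|A_2|\le |A|-e_1$) are fine. The gap is the step you yourself flag: deciding which single vertex carries almost all the mass, and whether it lies in $A_2$ or $A_3$. This is not resolved, and in the case $|Y|\ge 3$ the inference you do make is incorrect.

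Planarity forces the continuation through $a_3$ only for those $y\in Y$ that are not on the boundary of the face containing $u_2$ in the union of the paths $u_1xya_3$. Up to two vertices of $Y$ can lie on that boundary, and the density of $\mathcal H$ says nothing about how $E_1$ is distributed over $Y$. Your argument does not exclude the following situation: such an extreme $y_1$ has $p_2(u_1,y_1)=e_1-O(1)$, and its continuations split evenly between $a_3$ and a second neighbour $b_3\in A_3$. Then $a_3a_4$ is typical and $|Y|\ge 3$ may hold, yet $p_{2k-2t-2}(a_3,u_2)$ is only about $\frac12\prod_{i\ge 2}e_i$. So $w'=a_3$ fails condition~2, while the correct witness is $w=y_1$. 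Your case split on $|Y|$ therefore does not tell you which alternative of the lemma holds.

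The ingredient the paper uses here is Lemma~\ref{lem: bi tree}. The edges between $A_2$ and $A_3$ form a forest (Observation~\ref{obs: forest between Ai and Ai+1}). Put weights $w(x)=p_2(u_1,x)$ on $A_2$ and $w(y)=p_{2k-2t-2}(y,u_2)$ on $A_3$, and let $x_0,y_0$ be the heaviest vertices. Then $p_{2k-2t+1}(u_1,u_2)\le w(A_2)w(A_3)-(w(A_2)-w(x_0))(w(A_3)-w(y_0))$, with $w(A_2)=e_1$ and $w(A_3)\le\prod_{i\ge 2}e_i$. Near-tightness forces one of the two deficits to be $O(\xi)$. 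The analogous bound $w(H)\le w(A_2)w(A_3)-w(x_0)\bigl(w(A_3)-w(N_H(x_0))\bigr)$ (or its $y_0$ version) then shows that the heavy vertex's neighbourhood weight is nearly full. This gives $w=x_0$ or $w'=y_0$ directly. It handles split mass and extreme vertices at once, with no planarity geometry and no use of properties (2) and (3).

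Within your own framework, your property-(2) fallback is correct and can serve as a repair. Any two $y,y'\in Y$ share the neighbour $a_3$. Each pair $(u_1,y)$ is $(2k-1)$-valid, via a $(u_1,u_2)$-path of length $2t$, which avoids $A$ by Lemma~\ref{lem: t-decomp and 2k+1-t-decomp}. So if both had $p_2(u_1,\cdot)\ge 800k^k$, Corollary~\ref{cor: large common neighbor} and property~(2) would give a $C_{2k+1}$ containing a $(y,y')$-path of length $2k-3$. That path closes with $ya_3y'$ into an odd circuit of length $2k-1$, a contradiction.

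Hence at most one neighbour of $a_3$ in $A_2$ is heavy. A threshold argument would then complete both of your cases. If that neighbour carries almost all of $E_1$, take it as $w$. Otherwise, use the remaining mass, all but $O(1)$ of which is forced through $a_3$, to show that $p_{2k-2t-2}(a_3,u_2)$ is nearly full. As written, this argument is missing and has to be supplied.
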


\begin{proof}
    By Lemma~\ref{lem: size lower bound of A}, we have $|A| \ge (k-t) {\left(p_{2k-2t+1}(u_1,u_2)\right)}^{1/{k-t}} \ge (k-t) \left( \frac{1}{k}n - \xi n \right)$.
    Let $E_i$ be the set of edges between $A_{2i-1}$ and $A_{2i}$, $1 \le i \le k-t$.

    \begin{claim}\label{claim: large each E_i}
        $\left| |E_i| - \frac{1}{k}n \right| \le \xi n$, for all $1 \le i \le k-t$.
    \end{claim}

    \noindent
    \textbf{Proof of Claim~\ref{claim: large each E_i}}
        We have $\sum_{i=1}^{k-t}|E_i| \le |A| \le \frac{k-t}{k}n + \epsilon' n$.
        % Thus
        % \begin{equation}\label{eq: E1 dot E2 upper bound}
        % \begin{aligned}
        %     p_5(u_1,u_2) & \le |E_1| \cdot |E_2| \\
        %     & \le {\left(\frac{1}{3}n + \epsilon' n/2\right)}^2
        % \end{aligned}
        % \end{equation}
        Let $\xi' = \xi/k$.
        % We first prove $|E_1| \ge \frac{1}{k} n - \xi' n$, and the proof for other $|E_i|$ is similar.
        Suppose otherwise that $|E_i| \le \frac{1}{k} n - \xi' n$ for some $i$.
        Then
        \begin{equation*}
        \begin{aligned}
            \frac{n^{k-t}}{k^{k-t}} - \epsilon' n^{k-t}& \le p_{2k-2t+1}(u_1,u_2) \le \prod_{j=1}^{k-t}|E_j| \le |E_i| \frac{1}{{(k-t-1)}^{k-t-1}} {\left(\frac{k-t}{k}n + \epsilon' n - |E_i|\right)}^{k-t-1} \\
            & \le \left(\frac{1}{k} n - \xi' n\right) {\left(\frac{1}{k}n + \frac{\epsilon' + \xi' }{k-t-1}n \right)}^{k-t-1} \\
            & \le \frac{n^{k-t}}{k^{k-t}} + \frac{\epsilon' }{k^{k-t-1}}n^{k-t} - \frac{{\xi'}^2}{k-t-1} n^{k-t}.
        \end{aligned}
        \end{equation*}
        This is a contradiction when $\epsilon'$ is relatively small compared to $\xi'$.
        Hence $|E_j| \ge \frac{1}{k}n  - \xi' n$ for every $j$, and therefore $|E_i| \le |A| - \sum_{j \neq i} |E_j| \le \frac{1}{k}n + \epsilon' n + (k-t-1)\xi' n \le \frac{1}{k}n + \xi n$ for all $1 \le i \le k-t$.
    \hfill$\blacksquare$\par

    Let $H$ be a weighted bipartite subgraph of $G$ induced by $A_2$ and $A_3$.
    For each $x \in A_2$, define $w(x) = p_2(u_1,x) = |N(x) \cap N(u_1)|$.
    For each $y \in A_3$, define $w(y) = p_{2k-2t-2}(y,u_2)$.
    For a vertex set $S$, define $w(S) = \sum_{v \in S} w(v)$.
    Then $w(A_2) = |E_1|$ and $w(A_3) \le \prod_{i=2}^{k-t} |E_i|$.
    Assume $x_0 \in A_2$ maximizes $w(x)$, and $y_0 \in A_3$ maximizes $w(y)$.
    \begin{claim}\label{claim: wx0 or wy0 is large}
        Either $w(x_0) \ge \frac{1}{k}n - 2\xi n$ or $w(y_0) \ge \frac{n^{k-t-1}}{k^{k-t-1}} - 2\xi n^{k-t-1}$.
    \end{claim}

    \noindent
    \textbf{Proof of Claim~\ref{claim: wx0 or wy0 is large}}
        Suppose otherwise. By Lemma~\ref{lem: bi tree},
        \begin{equation*}
        \begin{aligned}
            \frac{n^{k-t}}{k^{k-t}} - \epsilon' n^{k-t}  & \le p_{2k-2t+1}(u_1,u_2) \le w(H) \\
             & \le w(A_2)w(A_3) - \left(w(A_2) - w(x_0)\right)\left(w(A_3) - w(y_0)\right) \\
             & \le {\left( \frac{1}{k} n + \frac{\epsilon'}{k-t}n \right)}^{k-t} - \xi n \left( {\left(\frac{n}{k} - \xi n\right)}^{k-t-1} - \frac{n^{k-t-1}}{k^{k-t-1}} + 2\xi n^{k-t-1} \right)\\
             & \le {\left( \frac{1}{k} n + \frac{\epsilon'}{k-t}n \right)}^{k-t} - \xi^2 n^{k-t}
        \end{aligned}
        \end{equation*}

        This is a contradiction when $\epsilon'$ is relatively small compared to $\xi$ and $n_6$ is sufficiently large.
    \hfill$\blacksquare$\par

    \textbf{Case 1:} $w(x_0) \ge \frac{1}{k}n - 2\xi n$.
    We claim that $w(N_H(x_0)) \ge \frac{n^{k-t-1}}{k^{k-t-1}} - 2\xi n^{k-t-1}$. Otherwise,
    \begin{equation*}
    \begin{aligned}
        \frac{n^{k-t}}{k^{k-t}} - \epsilon' n^{k-t}  & \le p_{2k-2t+1}(u_1,u_2) \le w(H) \\
            & \le (w(A_2) - w(x_0))w(A_3) + w(x_0) w(N(x_0))\\
            & \le w(A_2)w(A_3) - w(x_0) (w(A_3) - w(N(x_0)))\\
            & \le {\left( \frac{1}{k} n + \frac{\epsilon'}{k-t}n \right)}^{k-t} - \left(\frac{1}{k}n -2\xi n\right)  \left( {\left(\frac{n}{k} - \xi n\right)}^{k-t-1} - \frac{n^{k-t-1}}{k^{k-t-1}} + 2\xi n^{k-t-1} \right),
    \end{aligned}
    \end{equation*}
    which is a contradiction when $\epsilon'$ is relatively small compared to $\xi$ and $n_6$ is sufficiently large.
    Taking $w = x_0$, we have $|A'| \le |A| - |E_1| \le \frac{k-t-1}{k}n + 2\xi n$.

    \textbf{Case 2:} $w(y_0) \ge \frac{n^{k-t-1}}{k^{k-t-1}} - 2\xi n^{k-t-1}$.
    By the same argument as in Case 1, $w(N_H(y_0)) \ge \frac{1}{k}n - 2\xi n$.
    Otherwise,
    \begin{equation*}
    \begin{aligned}
        \frac{n^{k-t}}{k^{k-t}} - \epsilon' n^{k-t}  & \le p_{2k-2t+1}(u_1,u_2) \le w(H) \\
            & \le (w(A_3) - w(y_0))w(A_2) + w(y_0) w(N(y_0))\\
            & \le w(A_2)w(A_3) - w(y_0) (w(A_2) - w(N(y_0)))\\
            & \le {\left( \frac{1}{k} n + \frac{\epsilon'}{k-t}n \right)}^{k-t} - \left( \frac{n^{k-t-1}}{k^{k-t-1}} - 2\xi n^{k-t-1} \right)  \left( {\left(\frac{n}{k} - \xi n\right)} - \left( \frac{1}{k}n - 2\xi n\right) \right),
    \end{aligned}
    \end{equation*}
    which is a contradiction when $\epsilon'$ is relatively small compared to $\xi$ and $n_6$ is sufficiently large.
    Taking $w' = y_0$, we have $|A'| \le |A| - |E_1| \le \frac{k-t-1}{k}n + 2\xi n$.
\end{proof}

The same argument also gives the following lemma.

\begin{lemma}\label{lem: bootstrap 2}
    Let $\xi$ be a fixed small constant.
    Let $G$ be a planar graph on $n$ vertices forbidding $\mathcal{C}^o_{<2k+1}$, with property (1), (2) and (3).
    Then there exists a constant $n_7 = n_7(\xi, k)$ such that, whenever $n \ge n_7$ and $\epsilon'$ is relatively small compared to $\xi$, the following holds.

    Let $(u_1,u_2)$ be both a $(2t+1)$-valid pair and a $(2k-2t)$-valid pair, where $1 \le t \le k-2$.
    $p_{2k-2t}(u_1, u_2) \ge \frac{n^{k-t}}{k^{k-t}} - \epsilon' n^{k-t}$.
    Let $\{A_1, A_2, \ldots, A_{2k-2t-1}\}$ be the $(2k-2t)$-path-decomposition of $(u_1,u_2)$.
    Let $A = \bigcup_{i=1}^{2k-2t-1}A_i$ and assume $|A| \le \frac{k-t}{k}n + \epsilon' n$.
    Then there exists a vertex $w$ in $A_2$ such that
    \begin{enumerate}
        \item $p_2(u_1,w) = |N(w) \cap N(u_1)| \ge \frac{1}{k}n - 2\xi n$.
        \item $p_{2k-2t-2}(w, u_2) \ge \frac{n^{k-t-1}}{k^{k-t-1}} - 2\xi n^{k-t-1}$.
        \item $|A'| \le \frac{k-t-1}{k}n + 2\xi n$, where $A'$ is the union of the sets in the $(2k-2t-2)$-path-decomposition of $(w,u_2)$.
    \end{enumerate}
\end{lemma}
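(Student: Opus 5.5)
I will follow the proof of Lemma~\ref{lem: bootstrap 1} almost verbatim. The only structural change is that the path length $2k-2t$ is even, so the decomposition $\{A_1,\ldots,A_{2k-2t-1}\}$ has an odd number of layers. I pair the layers as $E_i$, the edges between $A_{2i-1}$ and $A_{2i}$ for $1\le i\le k-t-1$, and leave the last layer $A_{2k-2t-1}$ unpaired. By Observation~\ref{obs: forest between Ai and Ai+1} each $E_i$ is a forest, so $|E_i|\le |A_{2i-1}|+|A_{2i}|$. Consequently
\[
\sum_{i=1}^{k-t-1}|E_i| + |A_{2k-2t-1}| \le |A| \le \frac{k-t}{k}n+\epsilon' n,
\]
while $p_{2k-2t}(u_1,u_2)\le \prod_{i=1}^{k-t-1}|E_i|\cdot|A_{2k-2t-1}|$. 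The AM--GM argument of Claim~\ref{claim: large each E_i}, applied to these $k-t$ quantities, shows that each of $|E_1|,\ldots,|E_{k-t-1}|,|A_{2k-2t-1}|$ lies within $\xi n$ of $n/k$.

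Next, as before, I form the weighted bipartite graph $H=G[A_2,A_3]$. For $x\in A_2$ set $w(x)=p_2(u_1,x)$, and for $y\in A_3$ set $w(y)=p_{2k-2t-3}(y,u_2)$. Then $w(H)=p_{2k-2t}(u_1,u_2)$, $w(A_2)=|E_1|$, and $w(A_3)\le \prod_{i=2}^{k-t-1}|E_i|\cdot|A_{2k-2t-1}|$. (When $t=k-2$ the path has length four; then $A_3$ is the last layer and $w(y)=1$, which is harmless.) Lemma~\ref{lem: bi tree} and the computation of Claim~\ref{claim: wx0 or wy0 is large} give the dichotomy: either the maximizer $x_0\in A_2$ has $w(x_0)\ge n/k-2\xi n$, or the maximizer $y_0\in A_3$ has $w(y_0)$ close to $(n/k)^{k-t-1}$.

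The main obstacle is that the statement allows only the conclusion at a vertex of $A_2$, so the second alternative must be excluded. This mirrors Lemma~\ref{lem: order of deltas 2}, where Case~1 was impossible, and I will use the same planarity argument. If $w(y_0)$ is large, then $y_0$ has neighbours $x$ in $A_2$ carrying almost all of $w(A_2)\approx n/k$. Such an $x$ cannot have a large weight itself, since that would place us in the first alternative. So there are at least three distinct $x_1,x_2,x_3\in N(y_0)\cap A_2$ with positive weight, ordered around $y_0$. As in Case~1 of Lemma~\ref{lem: order of deltas}, planarity then forces every $(u_1,u_2)$-path of length $2k-2t$ through the middle one, $x_2$, to continue through $y_0$. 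Hence
\[
p_{2k-2t-2}(x_2,u_2)\ge \text{(degree lower bound)} ,
\]
whereas the paths through $x_2$ and $y_0$ number at most $\prod_{i\ge 2}|E'_i|\le n^{k-t-2}$, where $E'_i$ denotes the edges between $A_{2i}$ and $A_{2i+1}$. This contradicts a lower bound of order $n^{k-t-1}$.

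To make the lower bound rigorous, I would first prune low-degree configurations as in the hypergraph $\mathcal{H}'$ of Lemma~\ref{lem: order of deltas}, so that every surviving edge $x_2y_2$ of $E_1$ lies on at least $\gamma n^{k-t-1}$ paths. Alternatively, I could argue directly with the weights: bound the contribution of vertices of $A_3$ having at most two weighted neighbours by $2\max_x w(x)\cdot w(A_3)$, which is small in the second alternative.

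So the first alternative holds with $w=x_0$. The argument of Case~1 of Lemma~\ref{lem: bootstrap 1} shows $w(N_H(x_0))\ge (n/k)^{k-t-1}-2\xi n^{k-t-1}$, and this quantity is exactly $p_{2k-2t-2}(x_0,u_2)$. Finally, the $(2k-2t-2)$-path-decomposition of $(x_0,u_2)$ is contained in $A_3\cup\cdots\cup A_{2k-2t-1}$, which is disjoint from $A_1\cup A_2$. It therefore has size at most $|A|-|E_1|\le \frac{k-t-1}{k}n+2\xi n$.
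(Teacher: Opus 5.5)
Your overall architecture matches the paper's proof: pairing the layers with $A_{2k-2t-1}$ left unpaired, the analogue of Claim~\ref{claim: large each E_i}, the weighted forest $H=G[A_2,A_3]$ with Lemma~\ref{lem: bi tree}, Case~1 of Lemma~\ref{lem: bootstrap 1}, and the bound $|A'|\le |A|-|E_1|$. The gap is in the step you single out as the main obstacle, excluding the alternative at $y_0\in A_3$. Your planarity argument does not go through as written. From $w(N_H(y_0))\ge n/k-2\xi n$ and $\max_x w(x)<n/k-2\xi n$ you only get two neighbours of $y_0$ in $A_2$, not three, so the separating configuration from Lemma~\ref{lem: order of deltas} need not exist. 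The pruning to $\mathcal{H}'$ is not tied to the particular maximiser $y_0$, whose edges may be deleted. Your fallback bounds a contribution by $2\max_x w(x)\cdot w(A_3)$. That quantity is of order $n^{k-t}$, the same as $w(H)$, so it is not small.

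None of this is needed. Because the path length $2k-2t$ is even, $w(y)=p_{2k-2t-3}(y,u_2)$ counts paths whose $2k-2t-4$ internal vertices pair off into the layers $(A_4,A_5),\ldots,(A_{2k-2t-2},A_{2k-2t-1})$. Each pair of layers spans a forest by Observation~\ref{obs: forest between Ai and Ai+1}. Hence $w(y)\le\prod_{i=2}^{k-t-1}|E_i'|\le n^{k-t-2}$ for every $y\in A_3$, with $w(y)=1$ when $t=k-2$. This is far below $(n/k)^{k-t-1}-2\xi n^{k-t-1}$, so the second alternative can never occur, and the first holds with $w=x_0$. That one-line bound is the paper's entire proof. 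You actually write down this bound at the end of your detour, for paths through $y_0$; applying it directly to $w(y_0)$ closes the gap.
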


\begin{proof}
    The proof follows the same argument as Lemma~\ref{lem: bootstrap 1}, except that $E_{k-t}$ is replaced by $A_{2k-2t-1}$.

    Let $E_i'$ be the set of edges between $A_{2i}$ and $A_{2i+1}$, $1 \le i \le k-t-1$.
    By Observation~\ref{obs: forest between Ai and Ai+1}, $E_i'$ is a forest and thus $|E_i'| \le |A_{2i}| + |A_{2i+1}| \le n$.
    Therefore Case 2 in the proof of Lemma~\ref{lem: bootstrap 1} is impossible, because $w(y_0) \le \prod_{i=2}^{k-t-1} |E_i'| \le n^{k-t-2}$.
    The conclusion follows from Case 1 in the proof of Lemma~\ref{lem: bootstrap 1}.
\end{proof}

By Lemma~\ref{lem: big Delta2}, let $(u_1,u_2)$ be a $2$-valid pair with $p_2(u_1,u_2) = \Delta_2 \ge \frac{1}{k}n - \epsilon' n$.
Then by Corollary~\ref{cor: large common neighbor} and property (1), $(u_1,u_2)$ is also $(2k-1)$-valid with $p_{2k-1}(u_1,u_2) \ge \left( \frac{n-2k-1}{k} \right)^{k-1} \ge \frac{n^{k-1}}{k^{k-1}} - \epsilon' n^{k-1}$.
Let $\{A_1, A_2, \ldots, A_{2k-2}\}$ be the $(2k-1)$-path-decomposition of $(u_1,u_2)$ and $A = \bigcup_{i=1}^{2k-2}A_i$.
Let $\{B_1, B_2\}$ be the $3$-path-decomposition of $(u_1,u_2)$ and $B = B_1 \cup B_2$.
Lemma~\ref{lem: size lower bound of A} gives $|B| \ge p_2(u_1,u_2) = \Delta_2$.
By Lemma~\ref{lem: t-decomp and 2k+1-t-decomp}, $|A| \le n-|B| \le \frac{k-1}{k}n + \epsilon' n$.
Then $(u_1,u_2)$ satisfies the conditions of Lemma~\ref{lem: bootstrap 1} for $t=1$.

Starting from $(u_1,u_2)$ and iteratively applying Lemmas~\ref{lem: bootstrap 1} and~\ref{lem: bootstrap 2}, we obtain the following. For every constant $\xi$ relatively small compared to $1/k$, there exists $n_8 = n_8(\xi, k)$ such that, whenever $n \ge n_8$, there are vertices $u_1,\ldots, u_k$ with $|N(u_i) \cap N(u_{i+1})| \ge \frac{1}{k}n - \xi n$ for $i=1,\ldots,k-1$ and $p_{3}(u_1,u_k) \ge \frac{1}{k} n - \xi n$.

It remains to prove the following lemma.
\begin{lemma}\label{lem: finally}
    Let $G$ be a planar graph on $n$ vertices forbidding $\mathcal{C}^o_{<2k+1}$, with property (1), (2) and (3).
    Suppose there exists a small constant $\xi$ and vertices $u_1,u_2,\ldots, u_k$ such that $p_2(u_i,u_{i+1}) \ge \frac{1}{k}n - \xi n$ for $i=1,2,\ldots, k-1$ and $p_3(u_1,u_k) \ge \frac{1}{k}n - \xi n$, then there exists a constant $n_9 = n_9(\xi, k)$ such that the number of copies of $C_{2k+1}$ in $G$ is at most $h_k(n)$ when $n \ge n_9$.
\end{lemma}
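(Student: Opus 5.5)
We first fix the near-extremal skeleton. Let $X_i = N(u_i)\cap N(u_{i+1})$ for $1\le i\le k-1$, and let $X_k$ be the set of vertices lying on $(u_1,u_k)$-paths of length three, i.e.\ the union of the $3$-path-decomposition $\{B_1,B_2\}$ of $(u_1,u_k)$. By Lemma~\ref{lem: Ai Aj disjoint} and Lemma~\ref{lem: t-decomp and 2k+1-t-decomp}, applied to the pairs $(u_i,u_{i+1})$ and $(u_1,u_k)$ together with the paths through the other $X_j$'s, these sets are pairwise disjoint and avoid $\{u_1,\dots,u_k\}$. By Lemma~\ref{lem: size lower bound of A}, $|X_k|\ge p_3(u_1,u_k)$. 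Hence $|X_i|\ge n/k-\xi n$ for every $i$, and so $R:=V(G)\setminus(\{u_1,\dots,u_k\}\cup\bigcup_i X_i)$ has $|R|\le k\xi n$. By Corollary~\ref{cor: large common neighbor}, each $X_i$ with $i\le k-1$ contains degree-two vertices. Moreover, Lemma~\ref{lem: 4-cycle empty} and Lemma~\ref{lem: 6-cycle empty} show that all but $O(1)$ of the consecutive $4$- and $6$-cycles determined by $X_i$ bound empty regions. So all but $O(1)$ vertices of $X_1,\dots,X_{k-1}$ have degree two. The edges between $B_1$ and $B_2$ form a forest by Observation~\ref{obs: forest between Ai and Ai+1}, matching the tree in the construction.

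Next we classify the copies of $C_{2k+1}$. Any cycle using only skeleton vertices must pass through each $u_i$, pick one vertex of each $X_i$ for $i<k$, and pick one $B_1B_2$ edge. By Lemma~\ref{lem: bi tree} with weights $1$ this count is at most $\prod_{i<k}|X_i|\cdot e(B_1,B_2)\le\prod_{i<k}|X_i|\cdot(|X_k|-1)$. We then write $f_k(G)\le \prod_{i<k}|X_i|\,(|X_k|-1)+N_R$, where $N_R$ counts cycles meeting $R$ or using non-skeleton edges. The goal is to show $N_R\le h_k(n)-h_k(n-|R|)$ minus the slack. Since $x_1\cdots x_{k-1}(x_k)$ with $\sum x_i = n-k-1-|R|$ is at most $h_k(n-|R|)$ after re-indexing (the $-1$ in $|X_k|-1$ accounts for $x_k=|X_k|-1$ tree edges), the desired bound follows.

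To bound $N_R$ we use a bootstrap. Each vertex $r\in R$ lies in at most about $(n/k)^{k-1}$ cycles. Indeed, a degree-two vertex $v\in X_i$ and property~(2) force every cycle through $r$ that also uses $v$ to be compatible with the skeleton; the odd-circuit parity arguments of Claim~\ref{claim: unite two paths in a common cycle} show that a cycle through $r$ must replace a whole ``slot'' $X_i$ (or $X_k$) by a path through $r$ of the same length. Consequently $r$ behaves like an extra element of one slot, so the cycles through $r$ number at most $\prod_{j\ne i}|X_j|\le((n-2k-2)/k)^{k-1}$, up to lower-order terms. Combined with the lower bound from property~(1) and the increments in~\eqref{eq: hk n - hk n-1}, vertices that fail to join a slot exactly contribute strictly less than $h_k(n)-h_k(n-1)$. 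This contradicts property~(1), so every vertex of $R$ must be absorbed into some slot, and then the exact product bound applies. Iterating this absorption (re-running the argument with $X_i$ enlarged) gives $R=\emptyset$ in the final count.

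The main obstacle is this last step: showing rigorously that every vertex outside the skeleton either sits in a slot with exactly the right adjacencies, or lies in noticeably fewer than $h_k(n)-h_k(n-1)$ cycles. Planarity has to be used to rule out a vertex participating in cycles through two different slots, and the $\xi n$ error terms must be beaten by the $\Theta(n^{k-1})$ gap. We would attempt this by fixing a degree-two witness in each $X_i$ and using property~(2) and property~(3) together with the parity claims to pin down the cycle structure.
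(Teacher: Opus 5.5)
\textbf{There is a genuine gap.} Your setup is sound. The sets $X_i$ are the paper's $W_i$ and $A=A_1\cup A_2$, you have $|R|\le k\xi n$, and the cycles using only skeleton edges number at most $(|X_k|-1)\prod_{i<k}|X_i|$ by the forest bound. But the step you flag as the ``main obstacle'' is the whole content of the lemma, and the mechanism you sketch for it does not work.

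\emph{Your dichotomy reduces to $R=\emptyset$, which property~(1) cannot deliver.} $X_i$ already contains \emph{every} common neighbour of $u_i,u_{i+1}$, and $X_k$ contains every vertex on a $(u_1,u_k)$-path of length three. So no $r\in R$ can ``replace a whole slot'', and your dichotomy amounts to proving $R=\emptyset$ vertex by vertex from property~(1). This fails for vertices that replace a length-three segment straddling two slots. Suppose $u_2p_1p_2a_1$ is a path with $a_1\in A_1$ that avoids $u_1$. Then $u_2p_1p_2a_1a_2u_kw_{k-1}u_{k-1}\cdots w_2u_2$ is a $C_{2k+1}$; it trades $u_2w_1u_1a_1$ for $u_2p_1p_2a_1$. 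The vertex $p_1$ can lie in $d_{A_2}(a_1)\prod_{i=2}^{k-1}|W_i|$ such cycles, which is of order $(n/k)^{k-1}$. For suitable part sizes it is not below $h_k(n)-h_k(n-1)$ at all. For example, take $A_1=\{a_1\}$, $|A_2|=|W_2|=\cdots=|W_{k-1}|=m$, $|W_1|=m-2$, with $p_1,p_2$ the only extra vertices, so $n-k-1=km$. Then $p_1$ lies in exactly $m^{k-1}=h_k(n)-h_k(n-1)$ cycles, and property~(1) gives no contradiction at such a vertex.

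Relatedly, your bound $\prod_{j\ne i}|X_j|\le((n-2k-2)/k)^{k-1}$ is unjustified. The part sizes are only known up to $O(\xi n)$, which is far too coarse for a per-vertex comparison with $h_k(n)-h_k(n-1)$.

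\emph{How the paper handles the leftover vertices.} It splits them into two kinds.
\begin{itemize}
\item First, Lemmas~\ref{lem: 4-cycle empty} and~\ref{lem: 6-cycle empty} plus planarity place every leftover vertex in $R^{\mathrm{int}}\cup R^{\mathrm{ext}}$.
\item Proposition~\ref{prop: finally} then uses a type-$J$ count (which sets $W_j^{\circ}$ a cycle meets). It shows that if $x$ lies on \emph{no} $(a_1^{\mathrm{int}},u_2)$-, $(a_1^{\mathrm{ext}},u_2)$-, $(a_2^{\mathrm{int}},u_{k-1})$- or $(a_2^{\mathrm{ext}},u_{k-1})$-path of length three, then each $x_1xx_2$ lies in at most $\xi(2k)^{5k}n^{k-1}$ cycles. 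Only here is the $\Theta(n^{k-1})$ gap real. A minimum-degree vertex of $G[X']$, together with pigeonhole and property~(1), then gives $X'=\emptyset$.
\item The remaining vertices, the sets $P,Q,S,T$, are \emph{not} eliminated. Instead all cycles are counted by \eqref{eq: finally} under the vertex budget. An exchange argument shows that shifting $p',q',s',t'$ into the sizes of the $W_i$ only increases the bound. This reduces the count to $(|A|-1)\prod_i|W_i|\le h_k(n)$.
\end{itemize}
Your proposal has neither the localization-plus-Proposition step nor this global exchange count. The exchange count is what actually delivers the exact value $h_k(n)$, in place of your hoped-for $N_R\le h_k(n)-h_k(n-|R|)$.
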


\begin{proof}

    Let $W_i = N(u_i) \cap N(u_{i+1})$, $i=1,2,\ldots,k-1$.
    By definition, $\frac{1}{k}n -\xi n \le |W_i| \le \frac{1}{k}n + k\xi n$.
    Let $\{A_1,A_2\}$ be the $3$-path-decomposition of $(u_1,u_k)$.
    Let $A = A_1 \cup A_2$.
    Then $|W_i| \ge \frac{1}{k}n - \xi n$, $i=1,2,\ldots,k-1$, and $|A| \ge \frac{1}{k}n - \xi n$.
    Let $W = \bigcup_{i=1}^{k-1} W_i$.
% We may assume the size of $W\cup A$ is maximized with respect to the choice of $u_1,u_2,\ldots,u_k$.
Choose arbitrary vertices $w_i \in W_i$, $i=1,2,\ldots,k-1$, and an edge between $A_1$ and $A_2$. These choices give a cycle containing $u_1,u_2,\ldots,u_k$.
The cycle divides the plane into two regions.
We call the bounded region the \textbf{interior} and the unbounded region the \textbf{exterior}.
For each $i=1,2,\ldots,k-1$, let $w_i^{\mathrm{int}}$ be the innermost vertex in $W_i$, that is, there is no vertex in $W_i$ in the interior region.
Similarly, define $a_1^{\mathrm{int}} \in A_1$ and $a_2^{\mathrm{int}} \in A_2$ as the innermost vertices in $A_1$ and $A_2$.
Let $R^{\mathrm{int}}$ be the interior region bounded by the cycle $u_1w_1^{\mathrm{int}}u_2 w_2^{\mathrm{int}}\ldots u_k a_2^{\mathrm{int}} a_1^{\mathrm{int}} u_1$.
Similarly, define $w_i^{\mathrm{ext}}$, $a_1^{\mathrm{ext}}$, and $a_2^{\mathrm{ext}}$, and let $R^{\mathrm{ext}}$ be the exterior region~(see Figure~\ref{fig: lemma finally}).

\begin{figure}
    \centering
    \includegraphics[width=0.9\linewidth]{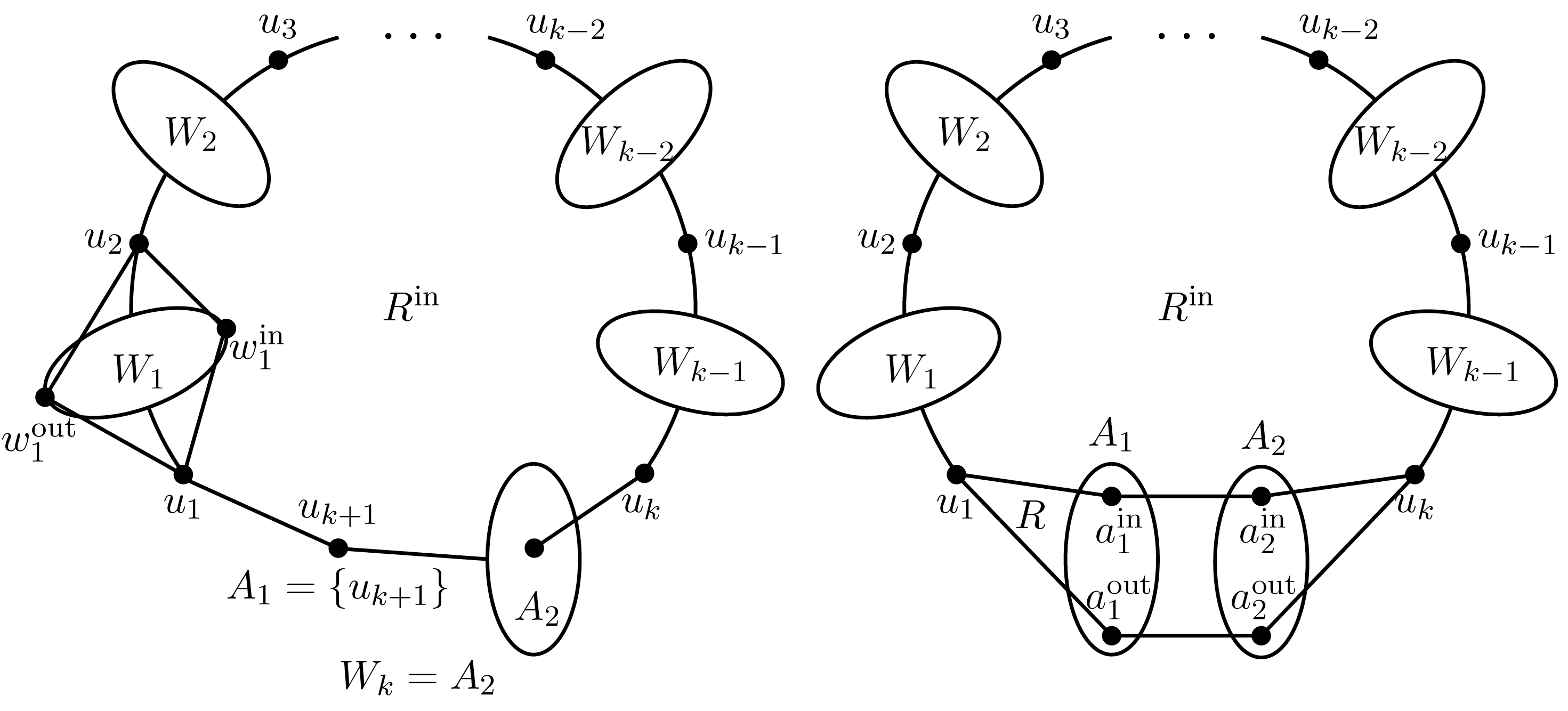}
    \caption{The structure of the graph in Lemma~\ref{lem: finally} and Proposition~\ref{prop: finally}.}~\label{fig: lemma finally}
\end{figure}

Let $U = \{u_1,u_2,\ldots,u_k\}$.
Let $X$ be the set of remaining vertices, i.e.,
\begin{equation}\label{eq: X}
    X = V \setminus \left( U \cup A \cup \bigcup_{i=1}^{k-1} W_i \right).
\end{equation}
We have $|X| \le k\xi n$.

Define $W_i^{\circ} = W_i \setminus \{w_i^{\mathrm{int}}, w_i^{\mathrm{ext}}\}$, $i=1,2,\ldots,k-1$.
Similarly, we can define $A_i^{\circ} = A_i \setminus \{a_i^{\mathrm{int}}, a_i^{\mathrm{ext}}\}$, $i=1,2$.
By Lemma~\ref{lem: 4-cycle empty}, we know the $4$-cycle formed by any two consecutive vertices in $W_i^{\circ}$ and $u_i,u_{i+1}$ is empty.
By Lemma~\ref{lem: 4-cycle empty} and Lemma~\ref{lem: 6-cycle empty}, the regions formed by $u_1,u_k$ and vertices in $A_1,A_2$ are also empty.
Thus all vertices in $X$ lie in either $R^{\mathrm{int}}$ or $R^{\mathrm{ext}}$~(excluding the boundary).

We first prove the following proposition.
\begin{proposition}\label{prop: finally}
    Let $G$ be a planar graph satisfying the structural hypotheses of Lemma~\ref{lem: finally}, but without assuming properties (1), (2) and (3).
    Let $X$ be the set of vertices defined in (\ref{eq: X}) with $|X| \le k\xi n$, and assume all vertices in $X$ lie in either $R^{\mathrm{ext}}$ or $R^{\mathrm{int}}$~(excluding the boundary).
    Let $x$ be a vertex in $R^{\mathrm{int}}$~(the case $x \in R^{\mathrm{ext}}$ is symmetric), and let $x_1,x_2$ be two vertices in $N(x)$.
    Assume $x$ does not lie on any $(a_1^{\mathrm{int}}, u_2)$-path, $(a_1^{\mathrm{ext}}, u_2)$-path, $(a_2^{\mathrm{int}}, u_{k-1})$-path, $(a_2^{\mathrm{ext}}, u_{k-1})$-path of length three.
    Then the number of $C_{2k+1}$ containing $x_1 x x_2$ is at most $\xi {(2k)}^{5k} n^{k-1}$.
\end{proposition}

\noindent
\textbf{Proof of Proposition~\ref{prop: finally}}

\textbf{Case 1:} $|A_1| = 1$~(or equivalently $|A_2| = 1$).
Assume $A_1 = \{u_{k+1}\}$~(see the left part of Figure~\ref{fig: lemma finally}).
Let $W_k = A_2$ and similarly we can define $W_k^{\circ} = A_2\setminus \{a_2^{\mathrm{int}}, a_2^{\mathrm{ext}}\}$.

Let $J$ be a subset of $\{1,2,\ldots,k\}$.
We say a cycle containing $x_1 x x_2$ is of type $J$ if for each $j \in J$, the cycle contains a vertex in $W_j^{\circ}$, and for each $j \notin J$, the cycle does not contain any vertex in $W_j^{\circ}$.
If the cycle contains a vertex in $W_j^{\circ}$, then it must contain $u_j$ and $u_{j+1}$.
Then each cycle contains at most one vertex in each $W_j^{\circ}$.

We estimate the number of cycles of type $J$ for a fixed $J$.
The cycle has length $2k+1$, and the two edges $x_1 x$ and $x x_2$ are already fixed. If $j \in J$, then there are $|W_j^{\circ}|$ choices for the vertex $w_j \in W_j^{\circ}$, and the two edges $u_j w_j$ and $w_j u_{j+1}$ are then fixed.
Thus $2 + 2|J|$ edges are fixed, and clearly $|J| \le k-1$.
There are at most $(2k)!$ cyclic orders for these fixed edges. It remains to choose the other $2k+1 - (2 + 2|J|) = 2(k - |J|) - 1$ edges. Now suppose we have a cycle $c_1c_2\ldots c_{2k+1}$ of type $J$ with fixed $2+2|J|$ edges.
If $c_{i}$ and $c_{i+1}$ are fixed vertices, then the edge $c_{i}c_{i+1}$ is also fixed.
Therefore, the undetermined edges must be disjoint paths of length at least two between the fixed vertices.
Let $P_1,P_2,\ldots,P_s$ be the paths formed by the undetermined edges.
Denote by $|P_i|$ the number of edges in $P_i$.
The middle vertices of each $P_i$ can only be vertices in $X$ and the boundaries of $R^{\mathrm{int}}$ and $R^{\mathrm{ext}}$.
By Lemma~\ref{lem: size lower bound of A}, the number of choices for each $P_i$ is at most ${(k \xi n + 4k)}^{\lceil (|P_i|-1)/2 \rceil}$.
We have $\sum_{i=1}^s |P_i| = 2(k - |J|) - 1$ and $|P_i| \ge 2$ for each $1 \le i \le s$.
Moreover, $\lceil (|P_i|-1)/2 \rceil = |P_i|/2$ only when $|P_i|$ is even.
At least one $P_i$ is of odd length since $\sum_{i=1}^s |P_i|$ is odd.

The total number of choices for all undetermined edges is at most
\begin{equation*}
\begin{aligned}
    \prod_{i=1}^s {(k \xi n+4k)}^{\lceil (|P_i|-1)/2 \rceil} & = {(k \xi n+4k)}^{\sum_{i=1}^s \lceil (|P_i|-1)/2 \rceil} \\
    &\le {(2k \xi n)}^{\frac{\sum_{i=1}^s (|P_i|-1)}{2} + \frac{s-1}{2}} \\
    & \le {(2k \xi n)}^{\frac{2(k - |J|) - 1 - s}{2} + \frac{s-1}{2}} = {(2k \xi n)}^{k - |J| - 1}.
\end{aligned}
\end{equation*}

The total number of cycles of type $J$ with $|J| < k-1$ containing $x_1 x x_2$ is at most
\begin{equation*}
\begin{aligned}
    (2k)! \left( \prod_{j \in J} |W_j| \right) {(2k \xi n)}^{k - |J| - 1} & \le (2k)! {\left( \frac{n}{k} + k\xi n \right)}^{|J|} {(2k \xi n)}^{k - |J| - 1} \\
    & \le (2k)! \xi (2k)^k n^{k-1}.
    % & \le (2k)! \xi \left( \frac{1}{k^{|J|}} + 2^{k} \xi \right) n^{k-1} \\
    % & \le \xi (2k)! k^k n^{k-1}.
\end{aligned}
\end{equation*}

If $|J| = k-1$, then there is exactly one $j_0$ not in $J$.
If $j_0 \in \{2,\ldots,k-1\}$, then the only possibility is $\{x_1,x_2\} = \{u_{j_0}, u_{j_0+1}\}$.
Then $x \in W_{j_0}$, a contradiction.
If $j_0 = k$, then $x$ is contained in a $(u_1,u_{k-1})$-path of length three. Then $x \in A_1\cup A_2$, a contradiction.
If $j_0 = 1$, then $x$ is contained in a $(u_{2},u_{k+1})$-path of length three, which contradicts our assumption.

Summing over all $J \subseteq \{1,2,\ldots,k\}$, the number of cycles containing $x_1 x x_2$ is at most $\xi {(2k)}^{4k} n^{k-1}$, which is stronger than the required bound for this case.

\textbf{Case 2}: $|A_1|, |A_2| \ge 2$.

We distinguish the following three types of such cycles:
\begin{enumerate}[label=Type \arabic*:, leftmargin=4em]
    \item It contains at most one vertex in $A_1$;
    \item It contains at most one vertex in $A_2$;
    \item It contains at least two vertices in both $A_1$ and $A_2$.
\end{enumerate}

For the first type, let $G'$ be the graph obtained from $G$ by identifying all vertices in $A_1$ into a single vertex $a_1$ and adding $|A_1| -1$ common neighbors of $a_1$ and $u_k$.
Equivalently, delete $A_1$, add the vertex $a_1$, join $a_1$ to the neighbors of every vertex $v \in A_1$, and then add $|A_1| -1$ common neighbors of $a_1$ and $u_k$.
Since the regions in $u_1xyu_kx'y'$ only contain vertices in $A_1$ and $A_2$, $G'$ is also a planar graph.
Moreover, every cycle of Type 1 in $G$ corresponds to a cycle containing $x_1xx_2$ in $G'$.
One checks that $G'$ also satisfies the conditions in Proposition~\ref{prop: finally}.
By Case 1, the number of $C_{2k+1}$ of Types 1 is at most $\xi {(2k)}^{4k} n^{k-1}$. Similarly, the number of $C_{2k+1}$ of Types 2 is also at most $\xi {(2k)}^{4k} n^{k-1}$.

\begin{figure}
    \centering
    \includegraphics[width=0.5\linewidth]{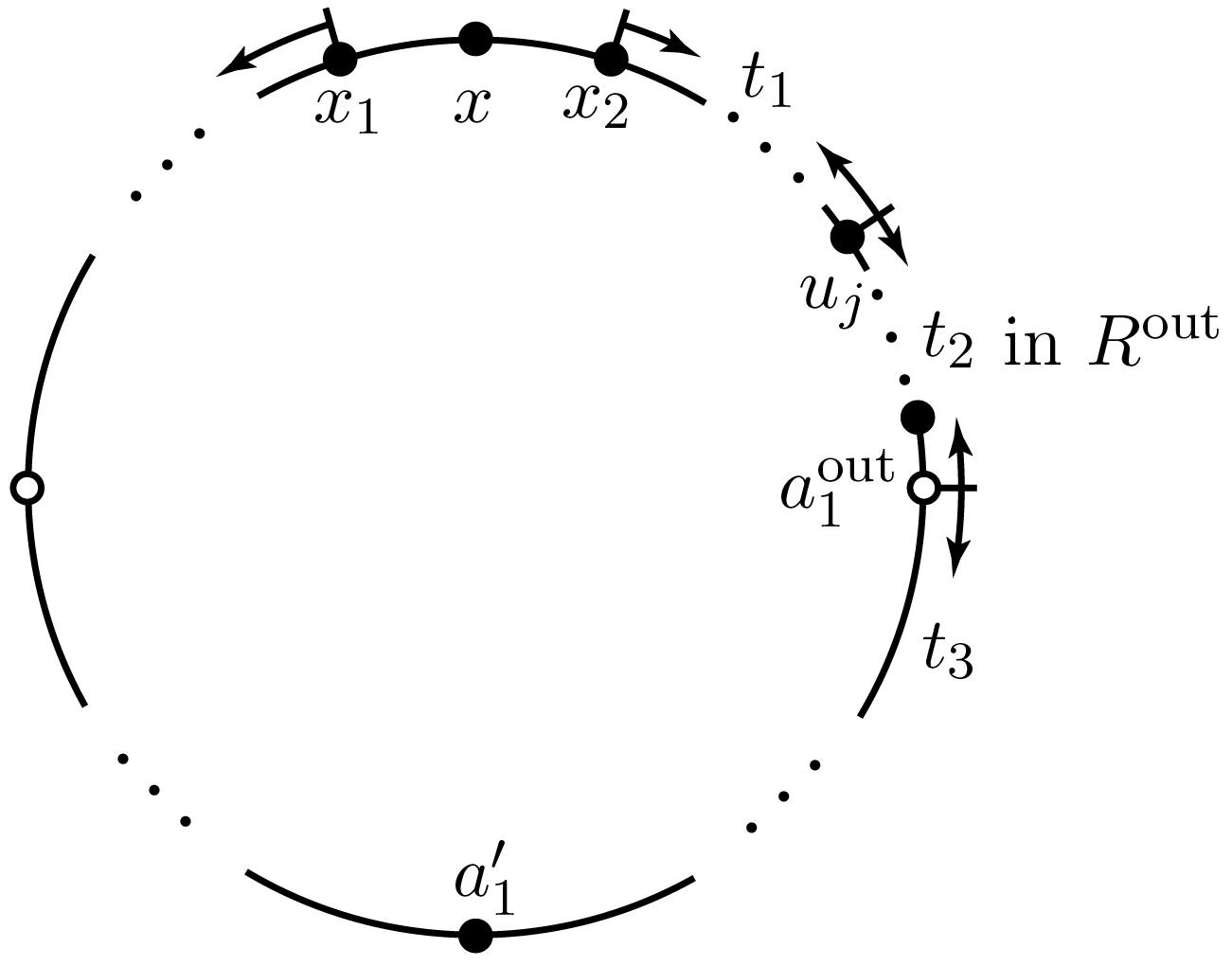}
    \caption{The cycle containing $v$ and $a_1'$. The hollow vertices are special vertices.}~\label{fig: special vertices}
\end{figure}

Now consider cycles of Type 3~(see the right figure in Figure~\ref{fig: lemma finally}).
We claim that such a cycle must contain either $a_1^{\mathrm{ext}}$ or $a_2^{\mathrm{ext}}$.
Consider the region $R$ formed by $u_1a_1^{\mathrm{int}}a_2^{\mathrm{int}}u_k a_2^{\mathrm{ext}} a_1^{\mathrm{ext}} u_1$.
Since there are at least two vertices in $A_1$ on the cycle, at least one of them, say $a_1'$, must lie in the interior of the region~(otherwise $a_1^{\mathrm{ext}}$ is in the cycle and we are done).
Since the cycle contains a vertex in the interior of the region and a vertex $x$ in the exterior of the region, at least two non-adjacent vertices on the boundary of the region must be in the cycle.
The vertices $x$ and $a_1'$ subdivide the cycle into two arcs~(see Figure~\ref{fig: special vertices}).
For each arc, choose a \textbf{special vertex} on the boundary of $R$ that lies on the arc and has maximum distance from $x$ along that arc.
Then these two vertices must be non-adjacent since the cycle is induced.
If $u_1$ is one of the two special vertices, then at least one neighbor of $u_1$ in the cycle must lie in the exterior of the region by the definition of special vertices.
However, since there are at least two vertices in $A_1$ in the cycle, the two neighbors of $u_1$ must be in $A_1$, a contradiction.
Thus, the two special vertices cannot be $u_1$ and $u_k$ by a similar argument.
Then one of $a_1^{\mathrm{ext}}, a_2^{\mathrm{ext}}$ is a special vertex since $a_1^{\mathrm{int}}$ and $a_2^{\mathrm{int}}$ are adjacent to each other.
Without loss of generality, we may assume $a_1^{\mathrm{ext}}$ is a special vertex~(see Figure~\ref{fig: special vertices}).
Then a neighbor of $a_1^{\mathrm{ext}}$ must lie in the exterior of the region, and hence must lie in $R^{\mathrm{ext}}$.
    The arc between $x$ and $a_1^{\mathrm{ext}}$ contains a vertex in $R^{\mathrm{ext}}$, so at some point this arc must cross the boundary of $R^{\mathrm{int}}$; assume it is $u_j$~(it cannot be a vertex in any $W_i$ because $|W_i| \gg 2k$).
Then the cycle consists of three parts~(see Figure~\ref{fig: special vertices}): the arc between $x_2$~(or equivalently $x_1$) and $u_j$ of length $t_1$, the arc between $u_j$ and $a_1^{\mathrm{ext}}$ of length $t_2$ in $R^{\mathrm{ext}}$, and the arc between $a_1^{\mathrm{ext}}$ and $x_1$~(or equivalently $x_2$) of length $t_3$.
We have $t_1 + t_2 + t_3 = 2k-1$ and $t_2 \ge 2$.
By Lemma~\ref{lem: size lower bound of A}, the number of cycles of Type 3 is at most
\begin{equation*}
\begin{aligned}
    &4k \sum_{t_1 + t_2 + t_3 = 2k-1} n^{\lceil (t_1-1)/2 \rceil} \cdot {\left( \xi n\right)}^{\lceil (t_2-1)/2 \rceil} \cdot {n}^{\lceil (t_3-1)/2 \rceil} \\
    \le & (2k)^{5} \xi n^{(t_1+t_2+t_3-1)/2}\\
    \le & (2k)^{5} \xi n^{k-1}.
\end{aligned}
\end{equation*}

Combining this with the bounds for Types 1 and 2 proves the proposition.
\hfill$\blacksquare$\par

\begin{figure}
    \centering
    \includegraphics[width=0.6\linewidth]{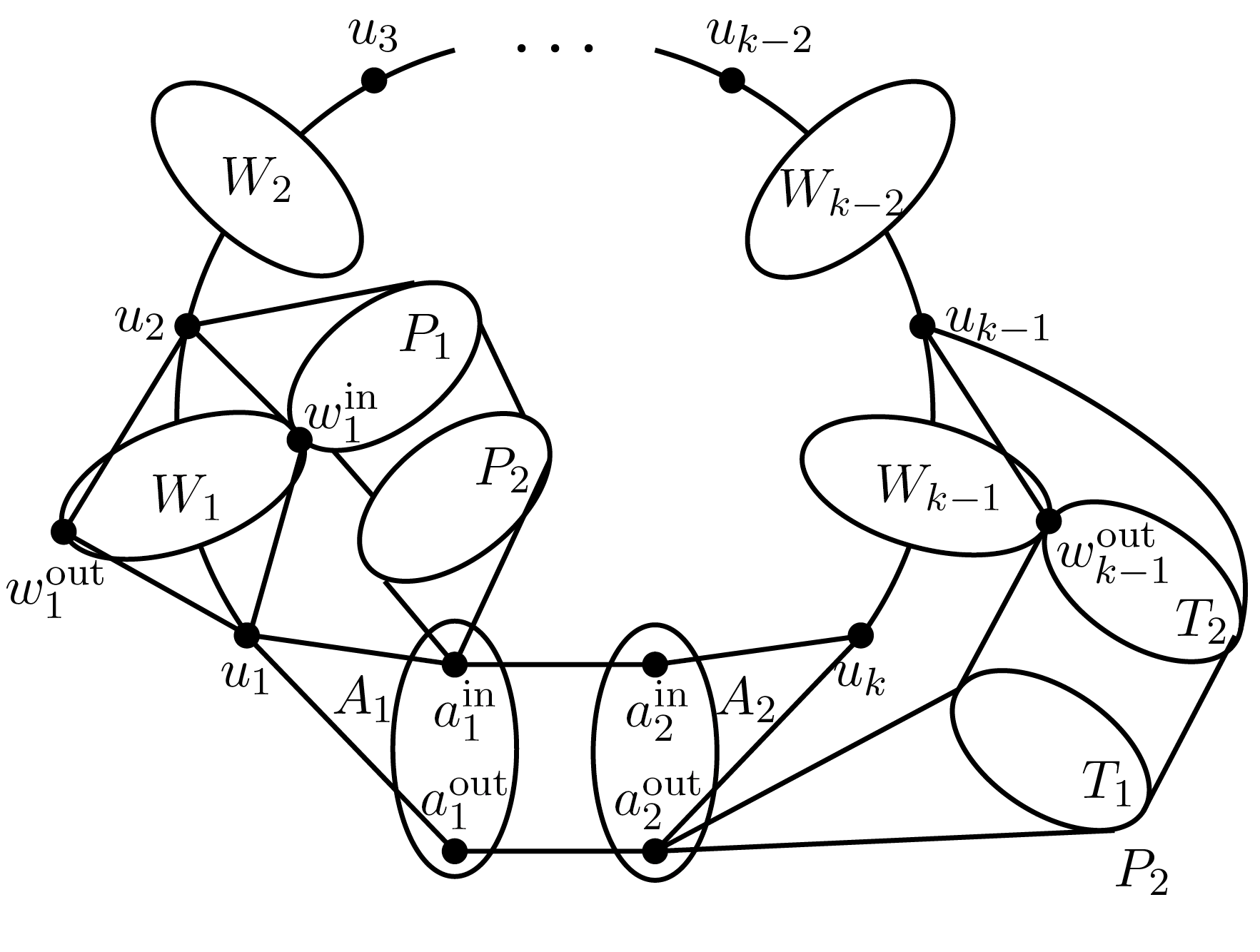}
    \caption{An illustration for the sets $P$ and $T$.}\label{fig: sets P,Q,S,T}
\end{figure}

We now complete the proof of Lemma~\ref{lem: finally}.
Let $\{P_1,P_2\}$ be the $3$-path-decomposition of $(u_2,a_1^{\mathrm{int}})$ in $R^{\mathrm{int}}$ avoiding $u_1$, that is, let $P_i$ be the set of vertices that lie on a path from $u_2$ to $a_1^{\mathrm{int}}$ of length three and avoid $u_1$.
Let $P = P_1 \cup P_2$.
By definition, $P \subseteq X \cup \{w_1^{\mathrm{int}}\}$~(see Figure~\ref{fig: sets P,Q,S,T}).

Similarly, let $\{Q_1,Q_2\}$ be the $3$-path-decomposition of $(a_2^{\mathrm{int}},u_{k-1})$ in $R^{\mathrm{int}}$ avoiding $u_k$.
Let $\{S_1,S_2\}$ be the $3$-path-decomposition of $(u_2,a_1^{\mathrm{ext}})$ in $R^{\mathrm{ext}}$ avoiding $u_1$, and let $\{T_1,T_2\}$ be the $3$-path-decomposition of $(a_2^{\mathrm{ext}},u_{k-1})$ in $R^{\mathrm{ext}}$ avoiding $u_k$.
Let $Q = Q_1 \cup Q_2$, $S = S_1 \cup S_2$, $T = T_1 \cup T_2$.
Then $Q \subseteq X \cup \{w_{k-1}^{\mathrm{int}}\}$, $S \subseteq X \cup \{w_1^{\mathrm{ext}}\}$, and $T \subseteq X \cup \{w_{k-1}^{\mathrm{ext}}\}$.
We claim that $X' = X \setminus (P \cup Q \cup S \cup T)$ is empty.
Otherwise, let $x$ be a vertex in $X'$ with minimum degree in the graph $G[X']$.
Then $d_{G[X']}(x) \le 5$ and $d_{G}(x) \le 2k+6$.
By property (1) and the pigeonhole principle, there exist $x_1,x_2$ in $N(x)$ such that the number of $C_{2k+1}$ containing $x_1 x x_2$ is at least $ {( \frac{n-2k-2}{k} )}^{k-1} / \binom{2k+6}{2}$.
A contradiction to Proposition~\ref{prop: finally} when $\xi$ is small enough.

It is easy to see that $P\cap Q = \emptyset$, otherwise, we get a circuit of odd length smaller than $2k+1$, a contradiction.
Similarly, we have $S \cap T = \emptyset$.
By definition, the sets $P,Q,S,T$ are disjoint.
Let $p' = \max\{|P|-1, 0 \}$, $q' = \max\{|Q|-1, 0 \}$, $s' = \max\{|S|-1, 0 \}$, $t' = \max\{|T|-1, 0 \}$.
Note that if $P$ is non-empty, then $|P| \ge 2$ by the definition.
Moreover, the edges between $P_1$ and $P_2$ form a forest by Observation~\ref{obs: forest between Ai and Ai+1}, thus there are at most $p' = |P|-1$ edges between $P_1$ and $P_2$.
The same bounds hold for $q',s',t'$.
We have now characterized all vertices.
It remains only to count. The number of induced $C_{2k+1}$ in $G$ is at most
\begin{equation}\label{eq: finally}
\begin{aligned}
    & (|A|-1)\prod_{i=1}^{k-1} |W_i| + (p' + s')|A_2| \prod_{i=2}^{k-1} |W_i| + (q' + t')|A_1| \prod_{i=1}^{k-2} |W_i| + (p' + s')(q'+t') \prod_{i=2}^{k-2} |W_i| \\
    \le {} & \prod_{i=2}^{k-2} |W_i| \cdot \big( (|A|-1) |W_1| |W_{k-1}| + (p' + s')|A_2| |W_{k-1}| + (q' + t')|A_1||W_1| + (p' + s')(q'+t') \big),
\end{aligned}
\end{equation}
subject to the constraint that $p'+q'+s'+t'+|A| + \sum_{i=1}^{k-1} |W_i| \le n-k$.
The right-hand side is maximized when $p' = s' = q' = t' = 0$: for instance, if $q' > 0$, moving the value of $q'$ to $|W_1|$ gives a larger value.
The same argument applies to $p'$, $s'$, and $t'$.
Therefore, the maximum number of induced $C_{2k+1}$ in $G$ is
\begin{equation*}
    (|A|-1)\prod_{i=1}^{k-1} |W_i|,
\end{equation*}
subject to the constraint that $(|A|-1) + \sum_{i=1}^{k-1} |W_i|  = n-k-1$, which is at most $h_k(n)$ by the definition of $h_k(n)$.
This completes the proof of Lemma~\ref{lem: finally}.
\end{proof}

This proves Theorem~\ref{thm: main}.

\section{Conclusion}

After a long, sophisticated proof, we have proved Theorem~\ref{thm: main}.
Although the proof is lengthy, much of the work is technical; the main ideas can be summarized as follows.

\begin{itemize}
    \item We work under the three properties introduced in Theorem~\ref{thm: new main}. Property (1) is a standard progressive-induction assumption, while properties (2) and (3) reflect symmetry considerations. These ideas are widely used, but making them compatible in the present setting requires the technical work in Section~\ref{sec: three properties}.
    \item In Section~\ref{subsec: faces}, we prove lemmas that identify empty regions and vertices of degree two. This idea is inspired by the work of Ghosh, Gy\H{o}ri, Janzer, Paulos, Salia and Zamora~\cite{ghosh2022maximum}, who determined the maximum number of induced copies of $C_5$ in a planar graph. For $C_5$, this idea is already close to sufficient; for $C_{2k+1}$, it is not.
    \item Section~\ref{subsec: special path or vertex of degree two} is a key part of the proof. It treats vertices of degree two and special paths in a unified way. One inherent difficulty is that the extremal graph is not unique: it contains a forest, whose two natural extremes are a star and a path.
    The lemmas in Section~\ref{subsec: special path or vertex of degree two} handle these two extreme cases.
    \item In Sections~\ref{subsec: prep Delta 2 Delta 3} and~\ref{subsec: large Delta2}, we prove that $\Delta_2$ is large. This determines a large part of the extremal graph and allows us to uncover the remaining structure. The inequalities in these sections are the main reason the proof works.
    \item With these lemmas in place, the bootstrap argument in Section~\ref{subsec: Bootstrap} is conceptually straightforward: we build the whole structure step by step from a large $\Delta_2$.
\end{itemize}

We do not characterize the extremal graph in this paper.
Although (\ref{eq: finally}) in the proof of Lemma~\ref{lem: finally} shows that a graph with properties (1), (2), and (3) attains the maximum number of $C_{2k+1}$ only when it is one of the extremal graphs described in Theorem~\ref{thm: main}, this only determines the extremal graphs within that restricted class.
To obtain a full characterization of the extremal graph, one would also need to track the operations in Section~\ref{sec: three properties} that transform an arbitrary graph into one with properties (1), (2), and (3).
This seems promising, but for the sake of brevity, we do not pursue it here.

The ultimate goal is to count induced copies after removing the assumption that all shorter odd cycles are forbidden, as in Conjecture~\ref{conj: induced odd cycle}.
Without this assumption, however, our method fails entirely because Lemma~\ref{lem: Ai Aj disjoint} no longer holds.

\section*{Acknowledgement}

The authors used GPT-5.5 only for grammar and style checking. It was not used to develop, verify, or modify any proof or mathematical argument in this paper. 
After using this tool, the authors reviewed and edited the content as needed and take full responsibility for the content of the article

% ---------------- bib -------------------------------
\bibliography{ref.bib}
\bibliographystyle{wyc4}

\end{document}